 \renewcommand*\l@subsection{\@tocline{2}{0em}{2.8em}{6.4em}{}}
\newcommand{\RR}{\mathbb{R}}
\newcommand{\QQ}{\mathbb{Q}}
\newcommand{\CC}{\mathbb{C}}
\newcommand{\XX}{\mathcal{X}}
\newcommand{\PSD}{\mathcal{S}_+}
\newcommand{\rank}{\textup{rank}\,}
\newcommand{\rankplus}{\textup{rank}_+\,}
\newcommand{\rankpsd}{\textup{rank}_{\textup{psd}}\,}
\newcommand{\rankpsdQ}{\textup{rank}_{\textup{psd}}^{\QQ}\,}
\newcommand{\rankpsdC}{\textup{rank}_{\textup{psd}}^{\CC}\,}
\newcommand{\sqrtrank}{\textup{rank}_{\! \! {\sqrt{\ }}}\,}
\newcommand{\diag}{\textup{diag}}
\newcommand{\supp}{\textup{supp}}
\newcommand{\conv}{\textup{conv}}
\newcommand{\cone}{\textup{cone}}
\newcommand{\onevec}{\mathbbm{1}}
\newcommand{\NN}{\mathbb{N}}
\newcommand{\CalS}{\mathcal{S}}
\newcommand{\SF}{\mathcal{S}\mathcal{F}}
\renewcommand{\S}{\mathcal S} % Symmetric matrices
\DeclareMathOperator{\trace}{trace}
\DeclareMathOperator{\range}{range}
\newcommand{\cL}{\mathcal L}
\newcommand{\hadprod}{\circ} % Hadamard product
\newcommand{\CS}{\text{CP}_{\text{psd}}} % completely-positive-semidefinite cone
\newcommand{\CP}{\text{CP}} % completely-positive
\newcommand{\DN}{\text{DN}} % doubly-nonnegative
\let\Re\relax
\let\Im\relax
\DeclareMathOperator{\Re}{Re}
\DeclareMathOperator{\Im}{Im}
\DeclareMathOperator{\vecm}{vec}
\newtheorem{theorem}{Theorem}[section]
\newtheorem{lemma}[theorem]{Lemma}
\newtheorem{corollary}[theorem]{Corollary}
\newtheorem{proposition}[theorem]{Proposition}
\theoremstyle{definition}
\newtheorem{definition}[theorem]{Definition}
\newtheorem{example}[theorem]{Example}
\theoremstyle{remark}
\newtheorem{remark}[theorem]{Remark}
\newtheorem{problem}[theorem]{Problem}
\title{Positive Semidefinite Rank}
\author[H. Fawzi]{Hamza Fawzi} \address{Laboratory for Information and
  Decision Systems (LIDS), Massachusetts Institute of Technology,
  Cambridge, MA 02139, USA} \email{hfawzi@mit.edu}
\author[J. Gouveia]{Jo{\~a}o Gouveia}
\address{CMUC, Department of Mathematics,
  University of Coimbra, 3001-454 Coimbra, Portugal}
\email{jgouveia@mat.uc.pt}
\author[P.A. Parrilo]{Pablo A. Parrilo}\address{Laboratory for
  Information and Decision Systems (LIDS), Massachusetts Institute of
  Technology, Cambridge, MA 02139, USA} \email{parrilo@mit.edu}
\author[R.Z. Robinson]{Richard Z. Robinson}
\address{Department of Mathematics, University of Washington, Box
  354350, Seattle, WA 98195, USA} \email{rzr@uw.edu}
\author[R.R. Thomas]{Rekha R. Thomas}
\address{Department of Mathematics, University of Washington, Box
  354350, Seattle, WA 98195, USA} \email{rrthomas@uw.edu}
\thanks{Fawzi and Parrilo were supported in part by AFOSR
  FA9550-11-1-0305.  Gouveia was supported by the Centre for
  Mathematics at the University of Coimbra and Funda\c{c}\~ao para a
  Ci\^encia e a Tecnologia, through the European program
  COMPETE/FEDER. Robinson was supported by the US NSF Graduate Research Fellowship through grant DGE-1256082 and Thomas was supported by NSF grant DMS-1115293. }
\begin{document}

\begin{abstract}
Let $M \in \RR^{p \times q}$ be a nonnegative matrix. The positive
semidefinite rank (psd rank) of $M$ is the smallest integer $k$ for
which there exist positive semidefinite matrices $A_i, B_j$ of size $k
\times k$ such that $M_{ij} = \trace(A_i B_j)$. The psd rank has many
appealing geometric interpretations, including semidefinite
representations of polyhedra and information-theoretic
applications. In this paper we develop and survey the main
mathematical properties of psd rank, including its geometry,
relationships with other rank notions, and computational and
algorithmic aspects.
\end{abstract}

\maketitle

\tableofcontents

\section{Introduction}

Matrix factorizations (or more generally, factorizations of linear
maps) are a classical and important topic in applied mathematics. For
instance, in the standard low-rank matrix factorization problem, given
a matrix $M \in \RR^{p \times q}$ one constructs matrices $A \in
\RR^{p \times k}$ and $B \in \RR^{k \times q}$ such that $M = A B$,
where the intermediate dimension $k$ is as small as possible.  Letting
$a_i$ and $b_j$ be the rows of $A$ and columns of $B$, respectively,
finding such a factorization can be interpreted as
a \emph{realizability} problem, where we want to produce vectors in
$\RR^k$ that realize the inner products given by $M_{ij} = \langle
a_i, b_j \rangle$. The smallest such $k$ is, of course, the usual rank
of the matrix $M$.

In applications, low-rank factorizations often have appealing
interpretations (e.g., reduced-order or ``simple'' models), since they
provide a decomposition of a linear map $\RR^q \rightarrow \RR^p$ in
terms of mappings $\RR^q \rightarrow \RR^k \rightarrow \RR^p$ through
a ``small'' subspace.  Many classical and successful methods in
systems theory (e.g., realization theory, model order reduction),
or statistics and machine learning (e.g., principal component
analysis, factor analysis) are based on these techniques; see
e.g. \cite{kalman1963mathematical,MoorePCA,JolliffePCA}.

In many situations, however, one often requires additional conditions
on the possible factors. A well-known example is the case
of \emph{nonnegative factorizations} \cite{cohen1993nonnegative},
where $M$ is a given nonnegative matrix and the factors $A,B$ are also
required to be nonnegative (here and throughout the paper, a \emph{nonnegative} matrix is a matrix where all the entries are nonnegative). These requirements often arise from
probabilistic interpretations (if $M$ corresponds to a joint
distribution, in which case the factors can be interpreted in terms of
conditional independence; see e.g. \cite{straten2003sandwich}), or
modeling choices (additive representations in terms of features and
latent variables; see e.g., \cite{LeeSeung}). Another well-known case
is when the factors $A,B$ are required to be ``small'' with respect to
a given matrix norm. This is a situation that has been well-studied in
contexts such as Banach space theory, communication complexity and
machine learning, where \emph{factorization norms} are used to capture
this notion; see
e.g. \cite{LinialMendelsonSchechtmanShraibman,LinialShraibman}.

Over the last couple of years, a new and intriguing class of matrix
factorization problems has been introduced, by considering
\emph{conic factorizations} of nonnegative linear maps through a \emph{convex cone} $K$, i.e., 
mappings $\RR^q_+ \rightarrow K \rightarrow \RR^p_+$ (nonnegative factorizations
correspond to the case when $K$ is the nonnegative orthant). A
particularly interesting case, which is the focus of this paper, occurs
when $K$ is the cone of positive semidefinite matrices
\cite{gouveia2011lifts}. 

More concretely, a \emph{positive semidefinite factorization} of a
nonnegative matrix $M \in \RR^{p \times q}$ is a collection of
symmetric $k \times k$ positive semidefinite matrices $\{A_1,\ldots,A_p\}$ and
$\{B_1,\ldots,B_q\}$ such that
\[
M_{ij} = \trace (A_i B_j), \qquad i=1,\ldots,p, \quad j=1,\ldots,q.
\]
The \emph{positive semidefinite rank} (\emph{psd rank}) of $M$ can
then be defined as the smallest $k$ for which such a factorization
exists \cite{gouveia2011lifts}.  As we explain in Section~\ref{sec:motivation}, a
natural and important source of these factorizations is their
relationship with representability of polytopes by semidefinite
programming. These results extend the connections, first explored by
Yannakakis in the context of polytopes and linear
programming \cite{Yannakakis}, between ``algebraic'' factorizations of
the slack matrix and the ``geometric'' question of existence of
extended formulations.  Since this quantity exactly characterizes
semidefinite representability, positive semidefinite rank is an
essential component of the burgeoning area of convex algebraic
geometry \cite{SDOCAG}.

Besides these geometric and complexity-theoretic considerations,
however, there are many other reasons to study these natural
factorizations and ranks as independent mathematical objects, and this
is the viewpoint we emphasize in this paper.  Our main goal is to
study the positive semidefinite rank of a matrix from the
algebraic-geometric perspective, as well as survey and collect most of
the existing results to date.

\subsection{Paper outline}
In Section~\ref{sec:definitions} we present the formal definition and
basic properties of psd rank. We analyze its behavior under natural
matrix operations, its continuity properties, as well as its
dependence on the underlying field. Throughout, we present numerous
examples illustrating these notions.

Section~\ref{sec:motivation} presents several complementary
interpretations and motivations for psd rank. We discuss extensively
its main geometric interpretation in terms of the ``complexity'' of a
convex body that is contained in between two polytopes, a topic
initially studied in \cite{gouveia2011lifts} and further elaborated
in \cite{fiorini2012lowerbound}. We also discuss a quantum analogue of
the well-known probabilistic interpretations of nonnegative
factorizations in terms of conditional independence, where the psd
rank characterizes the minimum amount of quantum information that must
be shared between two parties to generate samples of a correlated
random variable \cite{jain2013efficient}.

Like nonnegative rank, psd rank can be computationally challenging,
although in some situations it can be nicely characterized. In
Section~\ref{sec:psdrank2_convexprogramming} we show that the case of
psd rank equal to 2 can be decided using convex optimization (in
particular, semidefinite programming). The positive semidefinite rank
of a matrix has natural relations with other rank notions, such as the
usual (or ``standard'') rank and the nonnegative rank; we discuss
these in Section~\ref{sec:relationships_between_ranks}. We also
present the related notion of ``square root rank,'' a refinement of
psd rank to the case of rank-one factors. We show that in general,
these rank notions are incomparable (Table~\ref{table:ranks}), and
provide explicit examples/counterexamples for all pairwise comparisons
between them.

In Section~\ref{sec:properties} we analyze the situation where
additional properties are imposed on the factor matrices $A_i,
B_j$. We show how to guarantee uniform bounds on the factors for
different norms (trace, spectral norm), as well as upper bounds on
their ranks.

Since psd factorizations are not unique, it is also of interest to
study the space of all possible factorizations. In
Section~\ref{sec:space of factorizations} we study the topological
properties of the space of factorizations, and show that in certain
cases it is connected, in contrast to the case of nonnegative
factorizations. This geometric insight also allows a better
understanding on the rank of possible factors; see e.g.,
Example~\ref{ex:full rank psd factors}.

In Section~\ref{sec:symmetric_psd_factorizations} we specialize the
situation to symmetric matrices with symmetric factorizations, and
discuss the connections with some classical matrix cones (completely
positive, doubly nonnegative).

%as well as links to long-standing
%questions in quantum information.

We conclude in Section~\ref{sec:open} with a list of open problems,
and questions for future research.

%\todo{say explicitly whether it is a survey. check description of Section 8 -- and finish/merge it. 
%Clearly state relationship with Fiorini et al paper}

%%%%%%%%%%%%%%%%%%%%%%%%%%%%%%%%%%%%%%%%%%%%%%%%%%%%%%%%%%%%%%%%%%%%%%%%%%
\section{Definition and interpretations}
\label{sec:definitions}

For a positive integer $k$, let $\S^k_+$ denote the cone of $k \times k$ real symmetric positive semidefinite (psd) matrices. Recall that $\S^k_+$ is a closed convex cone in the vector space $\S^k$ of all $k \times k$ real symmetric matrices. We equip $\S^k$ with the standard inner product defined by:
\[ \langle A, B \rangle = \trace(AB) = \sum_{1\leq i,j \leq k} A_{ij} B_{ij}. \]
The inner product of any two psd matrices is nonnegative, i.e., if $A,B \in \S^k_+$, then $\langle A,B \rangle \geq 0$. In fact the cone $\S^k_+$ is \emph{self-dual}, meaning that:
\[ A \in \S^k_+ \; \Longleftrightarrow \; \langle A, X \rangle \geq 0 \quad \forall X \in \S^k_+. \]
The following well-known fact about orthogonal matrices in $\S^k_+$ will be useful later:
\begin{proposition}
\label{prop:orthpsd}
If $A, B \in \S^k_+$ are such that $\langle A, B \rangle = 0$, then $AB=0$.
\end{proposition}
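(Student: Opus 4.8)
The plan is to use the spectral decomposition of the positive semidefinite matrices and exploit the fact that each is a nonnegative combination of rank-one projectors onto orthonormal eigenvectors.

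First I would write $A = \sum_{i} \lambda_i u_i u_i^\top$ and $B = \sum_{j} \mu_j v_j v_j^\top$ as spectral decompositions, where $\lambda_i, \mu_j \geq 0$ (since $A, B \in \S^k_+$) and $\{u_i\}$, $\{v_j\}$ are orthonormal bases. Then $\langle A, B \rangle = \sum_{i,j} \lambda_i \mu_j \langle u_i u_i^\top, v_j v_j^\top \rangle = \sum_{i,j} \lambda_i \mu_j (u_i^\top v_j)^2$. This is a sum of nonnegative terms, so $\langle A, B \rangle = 0$ forces $\lambda_i \mu_j (u_i^\top v_j)^2 = 0$ for every pair $(i,j)$. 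In other words, whenever $\lambda_i > 0$ and $\mu_j > 0$ we must have $u_i^\top v_j = 0$: every eigenvector of $A$ with nonzero eigenvalue is orthogonal to every eigenvector of $B$ with nonzero eigenvalue.

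Next I would compute the product: $AB = \sum_{i,j} \lambda_i \mu_j (u_i u_i^\top)(v_j v_j^\top) = \sum_{i,j} \lambda_i \mu_j (u_i^\top v_j)\, u_i v_j^\top$. By the previous step each coefficient $\lambda_i \mu_j (u_i^\top v_j)$ vanishes (if $\lambda_i$ or $\mu_j$ is zero the term is zero; otherwise $u_i^\top v_j = 0$), so $AB = 0$. Equivalently and perhaps more cleanly, the conclusion $u_i^\top v_j = 0$ for all $i$ with $\lambda_i>0$ and all $j$ with $\mu_j>0$ says that the column space (range) of $A$ is orthogonal to the range of $B$; since $\range(B)$ contains the image of every vector under $B$, we get $AB = 0$ directly.

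I do not anticipate a serious obstacle here: the only point requiring a little care is the passage from $\langle A,B\rangle = 0$ to the orthogonality of the relevant eigenvectors, which relies on the nonnegativity of all eigenvalues — this is exactly where positive semidefiniteness (as opposed to mere symmetry) is essential. A one-line alternative avoiding eigenvectors entirely is to factor $A = P^2$ with $P \in \S^k_+$ (principal square root); then $0 = \langle A, B\rangle = \trace(P^2 B) = \trace(PBP) = \|B^{1/2}P\|_F^2$, so $B^{1/2}P = 0$, whence $BP^2 = B^{1/2}(B^{1/2}P)P = 0$, i.e. $BA = 0$, and transposing gives $AB = 0$. Either route is short; I would likely present the square-root version for brevity.
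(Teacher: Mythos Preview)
Your proof is correct and follows essentially the same approach as the paper: decompose $A$ and $B$ as sums of rank-one psd matrices, observe that $\langle A,B\rangle$ becomes a sum of nonnegative terms $(a_i^\top b_j)^2$, deduce $a_i^\top b_j = 0$ for all relevant pairs, and then read off $AB = 0$ from the same expansion. The paper's version is slightly more streamlined in that it writes $A = \sum_i a_i a_i^\top$ with the eigenvalues absorbed into the $a_i$, avoiding the separate case analysis on $\lambda_i,\mu_j$; your square-root alternative is also valid and not in the paper.
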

\begin{proof}
If we let $A = \sum_i a_i a_i^T$ and $B = \sum_j b_j b_j^T$ then $\langle A, B \rangle = \sum_{i,j} (a_i^T b_j)^2$. Thus since $\langle A, B \rangle = 0$ we get $a_i^T b_j = 0$ for all $i,j$. Hence this means that $AB = \sum_{i,j} (a_i^T b_j) a_i b_j^T = 0$.
\end{proof}

%For any two psd matrices $A,B \in \S^k_+$ we
%The dual of a cone $K$ in a vector space $V$, equipped with an inner product $\langle \cdot, \cdot \rangle$, is the closed convex cone $$K^\ast := \{ y \in V \,:\, \langle x, y \rangle \geq 0 \,\,\,\forall \,\,\, x \in V, y \in V \}$$. The dual of the cone $\cS^k_+$ is is self dual under the inner product $\langle A_i, B_j \rangle := \trace(A_iB_j)$ for $A_i \in \mathcal{S}^k, B_j \in (\mathcal{S}^k)^\ast$. In the rest of this paper, we identify the dual vector spaces $\mathcal{S}^k$ and $(\mathcal{S}^k)^\ast$, as well as the self dual cones
%$\PSD^k$ and $(\PSD^k)^\ast$.

\subsection{Psd rank}
We now give the formal definitions of \emph{psd factorizations} and \emph{psd rank}, which are the main objects of study in this paper:
\begin{definition}[\cite{gouveia2011lifts}]
\label{def:psdfactorization}
Given a nonnegative matrix $M \in \RR^{p\times q}_+$, a \emph{psd
factorization} of $M$ of size $k$ is a collection of psd matrices
$A_1,\dots,A_p \in \S^k_+$ and $B_1,\dots,B_q \in \S^k_+$ such that
$M_{ij} = \langle A_i, B_j \rangle$ for all $i=1,\dots,p$ and
$j=1,\dots,q$. The \emph{psd rank} of $M$, denoted $\rankpsd(M)$, is the smallest
integer~$k$ for which $M$ admits a psd factorization of size $k$.
\end{definition}
%\begin{definition} \label{def:psd rank}
%The \emph{positive semidefinite rank} (psd rank) of a nonnegative matrix $M \in \RR^{p \times q}_+$ is the smallest positive integer $k$ for which there exists
%\[ A_1, \ldots, A_p \in \S^k_+ \textup{ and } B_1, \ldots, B_q \in \S^k_+ \]
% such that $M_{ij} = \langle A_i, B_j \rangle$ for all $i=1,\ldots, p$ and $j=1, \ldots,q$. The psd rank of $M$ is denoted $\rankpsd M$.
%\end{definition}

\begin{remark}
A psd factorization of $M$ is equivalent to the existence of linear
maps $\RR^q_+ \rightarrow \S^k_+ \rightarrow \RR^p_+$. Indeed, given a
psd factorization, the linear maps $x \mapsto \sum_{j=1}^q x_j B_j$
and $Y \mapsto \langle A_i, Y \rangle$ (for $i =1,\ldots,p$) have the
desired property. The converse is also easy, by considering the image
of the coordinate vectors $e_1, \ldots, e_q$ under the first map, and
self-duality of the cone $\S^k_+$.
\end{remark}

The psd rank is related to another notion of rank for nonnegative matrices, namely the \emph{nonnegative rank} \cite{cohen1993nonnegative}.
\begin{definition}
Given a nonnegative matrix $M \in \RR^{p\times q}_+$, a \emph{nonnegative factorization} of $M$ of size $k$ is a collection of nonnegative vectors $a_1,\dots,a_p \in \RR^k_+$ and $b_1,\dots,b_q \in \RR^k_+$ such that $M_{ij} = a_i^T b_j$ for all $i=1,\dots,p$ and $j=1,\dots,q$.\\
The \emph{nonnegative rank} of $M$, denoted $\rank_+(M)$, is the smallest integer $k$ for which $M$ admits a nonnegative factorization of size $k$.
%The \emph{nonnegative rank} of a nonnegative matrix $M \in \RR^{p \times q}_+$ is the smallest positive integer $k$ for which there exists
%\[ a_1, \ldots, a_p \in \RR^k_+ \textup{ and } b_1, \ldots, b_q \in \RR^k_+ \]
% such that $M_{ij} = a_i^T b_j$ for all $i=1,\ldots, p$ and $j=1, \ldots,q$. The nonnegative rank of $M$ is denoted $\rank_+ M$.
\end{definition}
The first proposition below establishes simple inequalities between the different notions of rank, namely the usual rank, the psd rank and the nonnegative rank.
\begin{proposition}
\label{prop:ineqsranks}
If $M \in \RR^{p\times q}_+$ is a nonnegative matrix, then
\begin{equation}
\label{eq:ineqsranks}
 \frac{1}{2}\sqrt{1+8 \,\rank(M)}-\frac{1}{2} \leq \rankpsd(M) \leq \rank_+(M) \leq \min(p,q).
\end{equation}
\end{proposition}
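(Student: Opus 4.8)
The plan is to establish the three inequalities in \eqref{eq:ineqsranks} one at a time, working from right to left.

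\emph{The bound $\rankplus(M)\le\min(p,q)$.} This is witnessed by a trivial factorization: taking $a_i=e_i\in\RR^p_+$ to be the standard basis vectors and $b_j\in\RR^p_+$ to be the $j$-th column of $M$ gives $a_i^Tb_j=M_{ij}$, so $\rankplus(M)\le p$. Interchanging the roles of rows and columns (i.e., factoring through $\RR^q_+$ instead) gives $\rankplus(M)\le q$, and hence $\rankplus(M)\le\min(p,q)$.

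\emph{The bound $\rankpsd(M)\le\rankplus(M)$.} Given a nonnegative factorization $M_{ij}=a_i^Tb_j$ of size $k=\rankplus(M)$, I would ``diagonalize'' it: set $A_i=\diag(a_i)$ and $B_j=\diag(b_j)$. A diagonal matrix with nonnegative entries is positive semidefinite, so $A_i,B_j\in\S^k_+$, and $\langle A_i,B_j\rangle=\trace\big(\diag(a_i)\diag(b_j)\big)=\sum_{\ell=1}^k (a_i)_\ell (b_j)_\ell=a_i^Tb_j=M_{ij}$. This is a psd factorization of size $k$, so $\rankpsd(M)\le k$.

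\emph{The bound $\tfrac12\sqrt{1+8\,\rank(M)}-\tfrac12\le\rankpsd(M)$.} Let $k=\rankpsd(M)$ and fix a psd factorization $M_{ij}=\langle A_i,B_j\rangle$ with $A_i,B_j\in\S^k_+$. The key observation is that $\S^k$ is a vector space of dimension $\binom{k+1}{2}=\tfrac{k(k+1)}{2}$, and the trace inner product makes it isometric to $\RR^{k(k+1)/2}$ with its standard inner product: let $\vecm\colon\S^k\to\RR^{k(k+1)/2}$ send a symmetric matrix $A$ to the vector with entries $A_{ii}$ for $i=1,\dots,k$ and $\sqrt2\,A_{ij}$ for $i<j$, so that $\vecm(A)^T\vecm(B)=\trace(AB)$. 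Then $M_{ij}=\vecm(A_i)^T\vecm(B_j)$ exhibits $M$ as the product of a $p\times\tfrac{k(k+1)}{2}$ matrix with a $\tfrac{k(k+1)}{2}\times q$ matrix, whence $\rank(M)\le\tfrac{k(k+1)}{2}$. Rewriting this as $k^2+k-2\,\rank(M)\ge0$ and applying the quadratic formula (using $k>0$) yields $k\ge\tfrac12\sqrt{1+8\,\rank(M)}-\tfrac12$, as claimed.

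I do not expect a genuine obstacle: every step is elementary. The only point calling for a little care is the last one, where one must correctly account for off-diagonal entries being counted twice in $\trace(AB)$, so that $\vecm$ is an honest isometry onto $\RR^{k(k+1)/2}$ rather than merely yielding the cruder bound $\rank(M)\le k^2$ one would get by vectorizing all $k^2$ entries.
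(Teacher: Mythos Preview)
Your proof is correct and matches the paper's argument essentially step for step: the trivial factorization for $\rankplus(M)\le\min(p,q)$, the diagonalization trick for $\rankpsd(M)\le\rankplus(M)$, and the isometric vectorization $\vecm:\S^k\to\RR^{\binom{k+1}{2}}$ to bound $\rank(M)$ in terms of $\rankpsd(M)$. Even your explicit definition of $\vecm$ with the $\sqrt{2}$ weighting on off-diagonal entries coincides with the paper's.
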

\begin{proof}
The last inequality is trivially true since $M = MI_q = I_pM$ where $I_k$ is the $k \times k$ identity matrix.

Suppose $a_1,\ldots,a_p \in \RR^k_+$ and $b_1,\ldots,b_q \in \RR^k_+$ give a nonnegative factorization of $M \in \RR^{p \times q}_+$. Then the diagonal matrices $A_i:= \diag(a_i)$ and
$B_j :=\diag(b_j)$ give a $\S^k_+$-factorization of $M$, and we obtain the second inequality.

Now suppose $A_1, \ldots A_p, B_1, \ldots, B_q $ give a $\S^k_+$-factorization of $M$.
Consider the vectors
\[ a_i = \vecm(A_i) \quad \text{ and } \quad b_j = \vecm(B_j) \]
where for $X \in \S^k$ we define $\vecm(X) \in \RR^{{k+1 \choose 2}}$ by:
\[ \vecm(X) = (X_{11},\ldots,X_{kk},\sqrt{2}X_{12},\ldots,\sqrt{2}X_{1r},\sqrt{2}X_{23}, \ldots, \sqrt{2} X_{(k-1)k}). \]
Then $\langle a_i,b_j \rangle =  \langle A_i, B_j \rangle= M_{ij}$ so $M$ has rank at most ${k+1 \choose 2}$. By solving for $k$ we get the desired inequality.
%\begin{itemize}
%\item To show that $\rankpsd(M) \leq \rank_+(M)$, note that if $M_{ij} = a_i^T b_j$ is a nonnegative factorization of $M$ of size $k$, then by letting $A_i = \diag(a_i) \in \S^k_+$ and $B_j = \diag(b_j) \in \S^k_+$ we get a psd factorization of $M$ of size $k$: $M_{ij} = \langle A_i, B_j \rangle$. Thus this shows that $\rankpsd(M) \leq \rank_+(M)$.
%
%\item To see that $\rank_+(M) \leq \min(p,q)$, let $e_1,\dots,e_p \in \RR^p$ be the canonical basis for $\RR^p$, and note that we can write $M_{ij} = \langle e_i, M_j \rangle$ where $M_j \in \RR^p_+$ is the $j$'th column of $M$. This is a nonnegative factorization of $M$ of size $p$ thus $\rank_+(M) \leq p$. A similar reasoning can be used to show that $\rank_+(M) \leq q$.
%\end{itemize}
\end{proof}

\begin{example}
\label{ex:3x3derangement}
To illustrate the notion of a psd factorization, consider the following matrix $M$ known as the $3\times 3$ \emph{derangement} matrix:
\[ M = \begin{bmatrix} 0 & 1 & 1\\ 1 & 0 & 1\\ 1 & 1 & 0\end{bmatrix}. \]
This matrix $M$ satisfies $\rank(M) = \rank_+(M) = 3$. One can show that $M$ admits a psd factorization of size 2. Indeed, define:
\[
\begin{array}{lll}
A_1 = \left[ \begin{array}{cc} 1 & 0 \\ 0 & 0 \end{array} \right] &
A_2 = \left[ \begin{array}{cc} 0 & 0 \\ 0 & 1 \end{array} \right] &
A_3 = \left[ \begin{array}{cc} 1 & -1 \\ -1 & 1 \end{array} \right]\\[0.5cm]
B_1 = \left[ \begin{array}{cc} 0 & 0 \\ 0 & 1 \end{array} \right] &
B_2 = \left[ \begin{array}{cc} 1 & 0 \\ 0 & 0 \end{array} \right] &
B_3 =  \left[ \begin{array}{cc} 1 & 1 \\ 1 & 1 \end{array} \right].
\end{array}
\]
One can easily check that the matrices $A_i$ and $B_j$ are positive semidefinite, and that $M_{ij} = \langle A_i, B_j \rangle$ for all $i=1,\dots,3$ and $j=1,\dots,3$. This factorization shows that $\rankpsd(M) \leq 2$. In fact one has $\rankpsd(M) = 2$ since the first inequality in \eqref{eq:ineqsranks} gives $\rankpsd(M) \geq \frac{1}{2}\sqrt{1+8 \cdot 3}-\frac{1}{2} = 2$. $\lozenge$
\end{example}

\begin{example}
\label{ex:3x3circulant}
Consider more generally the following $3\times 3$ \emph{circulant} matrix, where $a,b,c$ are nonnegative real numbers:
\begin{equation}
\label{eq:3x3circulant}
 M(a,b,c) = \begin{bmatrix} a & b & c\\ c & a & b\\ b & c & a\end{bmatrix}.
\end{equation}
One can check that the usual rank of $M(a,b,c)$ is 3 unless $a=b=c$ in which case the rank is one. When $\rank(M(a,b,c)) = 3$, the bounds in \eqref{eq:ineqsranks} say that $2 \leq \rankpsd(M(a,b,c)) \leq 3$. Using the geometric interpretation of the psd rank (cf. Section \ref{sec:motivation}) one can show that
\[ \rankpsd(M(a,b,c)) \leq 2 \quad \Longleftrightarrow \quad a^2+b^2+c^2 \leq 2(ab+bc+ac). \]
%Indeed one can show that $\rankpsd M(a,b,c) \leq 2$ if and only if one can fit an ellipse between two nested equilateral triangles.
Figure \ref{fig:3x3circulant} shows the region described by the inequality above when $a=1$.
\begin{figure}[ht]
  \centering
  \includegraphics[width=8cm]{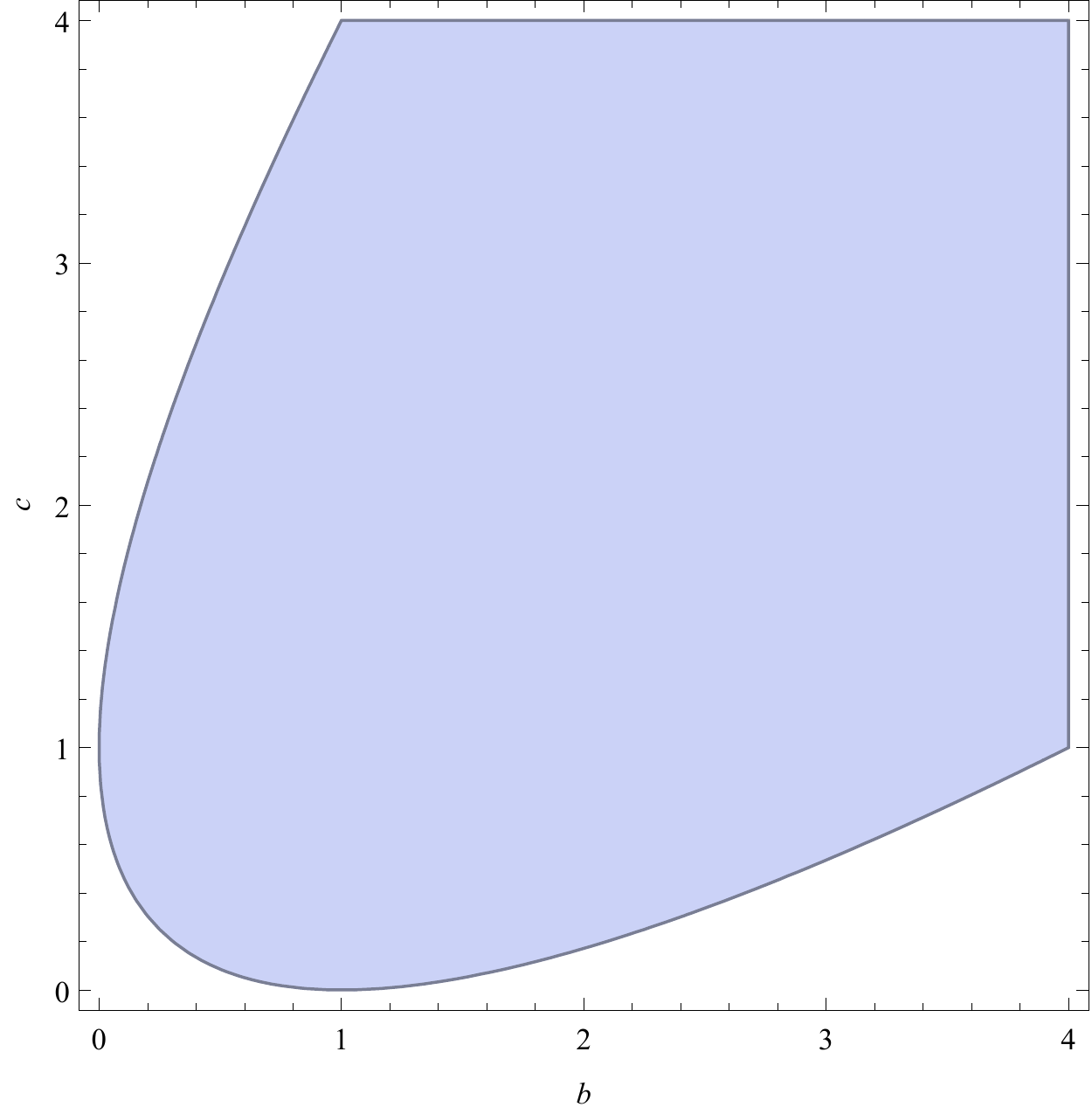}
  \caption{The blue region shows the values of $(b,c)$ for which $\rankpsd(M(1,b,c)) \leq 2$, where $M(a,b,c)$ is the $3\times 3$ circulant matrix defined in \eqref{eq:3x3circulant}. This region is defined by the inequality $1+b^2+c^2 \leq 2(b+c+bc)$.}
  \label{fig:3x3circulant}
\end{figure}
$\lozenge$
\end{example}

If $M$ is a matrix such that $\rank(M) = 1$ or $\rank(M) = 2$, then the psd rank is equal to the rank, as stated in the next proposition:
%The following proposition is a simple consequence of the inequalities \eqref{eq:ineqsranks}.
\begin{proposition}
\label{prop:rank1rank2}
For a nonnegative matrix $M \in \RR^{p\times q}_+$ the following is true:
\begin{equation}
 \label{eq:equivrankone}
 \rank(M) = 1 \; \Leftrightarrow \; \rank_+(M) = 1 \; \Leftrightarrow \; \rankpsd(M) = 1.
\end{equation}
Furthermore, we have the following implication:
\begin{equation}
\label{eq:implicationsranktwo}
\rank(M) = 2 \;\; \Rightarrow \;\; \rank_+(M) = \rankpsd(M) = 2.
\end{equation}
\end{proposition}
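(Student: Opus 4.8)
The plan is to establish \eqref{eq:equivrankone} as a cycle of implications, and then derive \eqref{eq:implicationsranktwo} from a geometric fact about two-dimensional cones together with \eqref{eq:equivrankone} itself.

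For \eqref{eq:equivrankone}: a psd factorization of size $1$ is a collection of nonnegative scalars $a_i,b_j \geq 0$ (the $1\times 1$ psd matrices) with $M_{ij} = a_i b_j$, so it is simultaneously a nonnegative factorization of size $1$ and exhibits $M = ab^T$ with $\rank(M)\leq 1$; since $\rankpsd(M)=1$ forces $M\neq 0$, this gives $\rankpsd(M)=1 \Rightarrow \rank(M)=1$ and $\rankpsd(M)=1\Rightarrow \rank_+(M)=1$. The implication $\rank_+(M)=1 \Rightarrow \rankpsd(M)=1$ is immediate from $\rankpsd(M)\leq\rank_+(M)$ in \eqref{eq:ineqsranks} (together with $M\neq 0$). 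It then remains to show $\rank(M)=1 \Rightarrow \rank_+(M)=1$: write $M = uv^T$, choose $(i_0,j_0)$ with $M_{i_0 j_0} = u_{i_0}v_{j_0} > 0$, replace $(u,v)$ by $(-u,-v)$ if necessary so that $u_{i_0},v_{j_0}>0$, and then nonnegativity of row $i_0$ and of column $j_0$ forces $v\geq 0$ and $u\geq 0$ respectively; thus $M=uv^T$ is a nonnegative factorization of size $1$.

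For \eqref{eq:implicationsranktwo}, suppose $\rank(M)=2$. The crux is to show $\rank_+(M)\leq 2$. Let $C = \cone(M_1,\dots,M_q)\subseteq\RR^p_+$ be the cone generated by the columns of $M$. Because a nonnegative combination of nonnegative vectors is nonnegative, $C$ cannot contain a nonzero vector together with its negative, so $C$ is pointed; and since the columns span $\colspan(M)$, the cone $C$ is $2$-dimensional. A pointed, $2$-dimensional, finitely generated cone is simplicial --- it is an angular sector bounded by two extreme rays --- and the extreme rays of a finitely generated cone lie among its generators, so they are spanned by two of the columns, say $M_{j_1}$ and $M_{j_2}$. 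Then every column satisfies $M_j = \alpha_j M_{j_1} + \beta_j M_{j_2}$ with $\alpha_j,\beta_j\geq 0$, which is precisely a nonnegative factorization of size $2$: the $p\times 2$ matrix with columns $M_{j_1},M_{j_2}$ times the $2\times q$ matrix with columns $(\alpha_j,\beta_j)$, both nonnegative. Hence $\rank_+(M)\leq 2$, and combining this with the contrapositive of $\rank_+(M)=1\Rightarrow\rank(M)=1$ from \eqref{eq:equivrankone} gives $\rank_+(M)=2$. Finally $\rankpsd(M)\leq\rank_+(M)=2$ by \eqref{eq:ineqsranks}, while the contrapositive of $\rankpsd(M)=1\Rightarrow\rank(M)=1$ rules out $\rankpsd(M)=1$, so $\rankpsd(M)=2$.

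The main obstacle is the geometric claim that a nonnegative matrix of rank $2$ has nonnegative rank $2$; everything else is short bookkeeping once that is available. Within that argument the delicate point is verifying that $C$ is both pointed and genuinely $2$-dimensional, so that it really is simplicial, and recalling that the extreme rays of a finitely generated cone occur among its generators --- this is what allows the nonnegative factors to be taken as actual columns of $M$ rather than auxiliary vectors.
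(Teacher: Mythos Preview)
Your proof is correct. For \eqref{eq:equivrankone} you spell out in full the implications that the paper simply declares ``clear from the inequalities \eqref{eq:ineqsranks}''; the content is the same.

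For \eqref{eq:implicationsranktwo} your route differs from the paper's. The paper does not prove the implication $\rank(M)=2 \Rightarrow \rank_+(M)=2$ directly: it cites Cohen--Rothblum for this fact, and separately (in Remark~\ref{rem:slackmatrices}) offers a geometric argument via the slack-matrix interpretation, namely that a rank-two nonnegative matrix is the slack matrix of two nested intervals on the real line, so Theorem~\ref{thm:psdlift} forces $\rankpsd(M)=2$. Your argument instead works entirely inside $\RR^p$: the cone on the columns sits in $\RR^p_+$, hence is pointed, is two-dimensional because $\rank(M)=2$, and a two-dimensional pointed polyhedral cone is simplicial with its two extreme rays among the generating columns. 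This is more elementary and self-contained than either of the paper's options---it avoids both the external citation and the slack-matrix machinery---at the cost of being specific to the nonnegative-rank statement (the paper's slack-matrix route gives $\rankpsd(M)=2$ directly without passing through $\rank_+$).
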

\begin{proof}
The proof of \eqref{eq:equivrankone} is clear from the inequalities \eqref{eq:ineqsranks}.
For \eqref{eq:implicationsranktwo}, we can use a result from \cite{cohen1993nonnegative} which states that if $\rank(M) = 2$ then $\rank_+(M) = 2$, from which it easily follows that $\rankpsd(M) = 2$. In Section \ref{sec:motivation} (Remark \ref{rem:slackmatrices}), we give a geometric argument for \eqref{eq:implicationsranktwo}.
\end{proof}

\subsection{Dependence on the field} In the definition of psd rank, Definition \ref{def:psdfactorization}, we required the factors $(A_i)_{i=1,\dots,p}$ and $(B_j)_{j=1,\dots,q}$ to have real entries. When the matrix $M$ has rational entries, it is natural to define a notion of psd rank where the factors $(A_i)_{i=1,\dots,p}$ and $(B_j)_{j=1,\dots,q}$ are required to have rational entries. If we denote this by $\rankpsdQ(M)$, then we clearly have:
\begin{equation}
\label{eq:ineqpsdrankQ}
 \rankpsd(M) \leq \rankpsdQ(M).
\end{equation}
In \cite{gouveia2014rational} it was shown on a $8\times 6$ matrix $M$ that the inequality \eqref{eq:ineqpsdrankQ} can be strict.

It is also natural to consider a related notion of psd rank where the
factors $(A_i)_{i=1,\dots,p}$ and $(B_j)_{j=1,\dots,q}$ in the psd
factorizations are positive semidefinite \emph{Hermitian}
matrices. Denote by $\rankpsdC(M)$ the associated psd rank. It is not
difficult to see that the following inequalities hold:
\[
\rankpsdC(M) \leq \rankpsd(M) \leq 2 \, \rankpsdC(M).
\]
The second inequality comes from the fact that if $A$ is a Hermitian positive semidefinite matrix, then the $2n\times 2n$ real symmetric matrix
\begin{equation}
 \label{eq:hermitianembed}
 \frac{1}{\sqrt{2}} \begin{bmatrix} \Re A & \Im A\\ -\Im A & \Re A \end{bmatrix}
\end{equation}
is positive semidefinite. Furthermore the function which maps any $n\times n$ Hermitian matrix $A$ to the $2n\times 2n$ block matrix \eqref{eq:hermitianembed} preserves inner products.

One can show that the Hermitian psd rank can be strictly smaller than the real psd rank. Consider the $4\times 4$ derangement matrix:
\[
M = 
\begin{bmatrix}
0 & 1 & 1 & 1\\
1 & 0 & 1 & 1\\
1 & 1 & 0 & 1\\
1 & 1 & 1 & 0
\end{bmatrix}.
\]
Using the inequalities \eqref{eq:ineqsranks} one can show that $\rankpsd(M) \geq 3$. However one can find a psd factorization of $M$ with Hermitian matrices of size 2, as given below:
\[
\footnotesize
\begin{array}{llll}
A_1 = \left[ \begin{array}{cc} 1 & 0 \\ 0 & 0 \end{array} \right] &
A_2 = \left[ \begin{array}{cc} 0 & 0 \\ 0 & 1 \end{array} \right] &
A_3 = \left[ \begin{array}{cc} 1 & -1 \\ -1 & 1 \end{array} \right] &
A_4 = \left[ \begin{array}{cc} 1 & e^{2i\pi/3} \\ e^{-2i\pi/3} & 1 \end{array} \right]
\\[0.5cm]
B_1 = \left[ \begin{array}{cc} 0 & 0 \\ 0 & 1 \end{array} \right] & 
B_2 = \left[ \begin{array}{cc} 1 & 0 \\ 0 & 0 \end{array} \right] & 
B_3 =  \left[ \begin{array}{cc} 1 & 1 \\ 1 & 1 \end{array} \right] & 
B_4 = \left[ \begin{array}{cc} 1 & -e^{2i\pi/3}\\ -e^{-2i\pi/3} & 1 \end{array} \right].
\end{array}
\]
Actually in \cite{TroyLeeHermitianPsdRank}, the authors exhibit a sequence of matrices $(M_k)$ of increasing size such that $\rankpsdC(M_k) < \rankpsd(M_k)$ for all $k$ and where
the gap $\rankpsd(M_k) - \rankpsdC(M_k)$ grows with $k$ (the ratio $\rankpsd(M_k) / \rankpsdC(M_k)$ goes asymptotically to $\sqrt{2}$).

%Despite the possible differences between the different notions of psd rank,
In this survey we will focus on the \emph{real} psd rank, given in Definition~\ref{def:psdfactorization}.

\subsection{First properties} The next theorem establishes some structural properties satisfied by the psd rank
\begin{theorem}
\label{thm:firstprops}
Given a nonnegative matrix $M \in \RR^{p\times q}_+$, we have:
\begin{itemize}
\item[(i)] $\rankpsd(M) = \rankpsd(M^T)$.
\item[(ii)] If $D_1 \in \RR^{p\times p}_+, D_2\in \RR^{q\times q}_+$ are diagonal matrices with strictly positive elements on the diagonal, then
$\rankpsd(D_1 M D_2) = \rankpsd(M)$.
\item[(iii)] If $N \in \RR^{p\times q}_+$, then $\rankpsd(M+N) \leq \rankpsd(M) + \rankpsd(N)$.
\item[(iv)] If $N \in \RR^{q\times r}_+$ then $\rankpsd(MN) \leq \min(\rankpsd(M), \rankpsd(N))$.
\item[(v)] $\rankpsd(M\hadprod M) \leq \rank(M)$, where $\hadprod$ denotes Hadamard (entrywise) product.
\end{itemize}
\end{theorem}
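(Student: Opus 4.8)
The plan is to prove all five items by direct manipulation of psd factorizations; in each case the stated bound comes from writing down an explicit factorization of the target matrix. I would start with the two easiest. For (i), if $M_{ij} = \langle A_i, B_j\rangle$ is a psd factorization of size $k$, then reading it as $(M^T)_{ji} = \langle B_j, A_i\rangle$ is a psd factorization of $M^T$ of the same size; swapping the two families back gives the reverse inequality, hence equality. For (ii), write $D_1 = \diag(d_1,\dots,d_p)$ and $D_2 = \diag(e_1,\dots,e_q)$ with all $d_i,e_j>0$. Given $M_{ij} = \langle A_i, B_j\rangle$, the matrices $d_iA_i$ and $e_jB_j$ remain psd and satisfy $(D_1MD_2)_{ij} = d_ie_jM_{ij} = \langle d_iA_i, e_jB_j\rangle$, so $\rankpsd(D_1MD_2)\le\rankpsd(M)$; applying the same argument to $M = D_1^{-1}(D_1MD_2)D_2^{-1}$ (again diagonal with positive diagonal) gives the reverse inequality.

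For (iii) and (iv) I would use the standard ``block-diagonal'' and ``contraction'' constructions. For (iii), if $A_i,B_j$ is a factorization of $M$ of size $k$ and $C_i,D_j$ one of $N$ of size $\ell$, then the block-diagonal matrices $\left[\begin{smallmatrix} A_i & 0 \\ 0 & C_i \end{smallmatrix}\right]$ and $\left[\begin{smallmatrix} B_j & 0 \\ 0 & D_j \end{smallmatrix}\right]$ are psd of size $k+\ell$ with inner product $\langle A_i,B_j\rangle + \langle C_i,D_j\rangle = (M+N)_{ij}$. For (iv), given $M_{ij}=\langle A_i,B_j\rangle$ of size $k$ and $N\in\RR^{q\times r}_+$, observe $(MN)_{i\ell} = \sum_j M_{ij}N_{j\ell} = \bigl\langle A_i,\ \sum_j N_{j\ell}B_j\bigr\rangle$, and $\sum_j N_{j\ell}B_j$ is a nonnegative combination of psd matrices, hence psd; this yields $\rankpsd(MN)\le\rankpsd(M)$, and feeding a factorization of $N$ into the same computation on the left factor yields $\rankpsd(MN)\le\rankpsd(N)$.

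For (v), set $r=\rank(M)$ and factor $M = UV$ with $U\in\RR^{p\times r}$, $V\in\RR^{r\times q}$; let $u_i^T$ be the $i$-th row of $U$ and $v_j$ the $j$-th column of $V$, so $M_{ij}=u_i^Tv_j$. Then
\[ (M\hadprod M)_{ij} = (u_i^Tv_j)^2 = \trace(u_iu_i^T v_jv_j^T) = \langle u_iu_i^T,\ v_jv_j^T\rangle, \]
and the rank-one psd matrices $u_iu_i^T, v_jv_j^T\in\S^r_+$ constitute a psd factorization of $M\hadprod M$ of size $r$.

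None of the five steps is genuinely hard; the only points needing a touch of care are invoking the inverse diagonal map for the reverse inequality in (ii), and the observation in (v) that the square of an inner product of two vectors equals the trace inner product of the associated rank-one psd matrices — this is exactly what forces the factors to be psd and of size precisely $\rank(M)$.
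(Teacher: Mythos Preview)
Your proof is correct and follows essentially the same approach as the paper: each item is proved by exhibiting an explicit psd factorization of the target matrix, using precisely the constructions you describe (swapping factors for (i), scaling factors for (ii), block-diagonal factors for (iii), nonnegative combinations for (iv), and rank-one outer products for (v)). If anything, you are slightly more explicit than the paper in justifying the reverse inequality in (ii) via $D_1^{-1}, D_2^{-1}$, which the paper leaves implicit.
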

\begin{proof}
~\begin{itemize}
\item[(i)] Property (i) is clear.
\item[(ii)] If $M_{ij} = \langle A_i, B_j \rangle$ is a psd factorization of $M$, then
\[ (D_1 M D_2)_{ij} = \langle (D_1)_{ii} A_i, (D_2)_{jj} B_j \rangle \] is a psd factorization of $D_1 M D_2$ of the same size. Thus since the diagonal elements of $D_1$ and $D_2$ are strictly positive we easily get that $\rankpsd(M) = \rankpsd(D_1 M D_2)$.
\item[(iii)] Let $M_{ij} = \langle A_i, B_j \rangle$ and $N_{ij} = \langle A'_i, B'_j \rangle$ be psd factorizations of $M$ and $N$ of size respectively $\rankpsd(M)$ and $\rankpsd(N)$. Define
\[ C_i = \begin{bmatrix} A_i & 0\\ 0 & A'_i \end{bmatrix} \quad \text{ and } \quad D_j = \begin{bmatrix} B_j & 0\\ 0 & B'_j \end{bmatrix}. \]
Note that $C_i$ and $D_j$ are psd matrices of size $\rankpsd(M)+\rankpsd(N)$. Furthermore we clearly have $M_{ij} + N_{ij} = \langle C_i, D_j \rangle$. Thus $\rankpsd(M+N) \leq \rankpsd(M)+\rankpsd(N)$.
\item[(iv)] Let $k=\rankpsd(M)$ and let $M_{ij} = \langle A_i, B_j \rangle$ be a psd factorization of $M$ of size $k$.
 For $j \in [r]$, define $C_j = \sum_{t=1}^{q} N_{tj} B_t$. Note that $C_j \in \S^k_+$ since $N_{tj} \geq 0$ and $B_t \in \S^k_+$. Then we verify that $\langle A_i, C_j \rangle = \sum_{t=1}^q N_{tj} \langle A_i, B_t \rangle = \sum_{t=1}^q N_{tj} M_{it} = (MN)_{ij}$ and so we get a psd factorization of $MN$ of size $k$. This shows that $\rankpsd(MN) \leq \rankpsd(M)$. A similar argument shows that $\rankpsd(MN) \leq \rankpsd(N)$.
\item[(v)] Let $M_{ij} = \langle a_i, b_j \rangle$ be a factorization of $M$ where $a_i, b_j \in \RR^r$ where $r = \rank(M)$. Define $A_i = a_i a_i^T \in \S^r_+$ and $B_j = b_j b_j^T$ for $i=1,\dots,p$ and $j=1,\dots,q$. Then $A_i, B_j$ give a psd factorization of $M\hadprod M$ of size $r$. Indeed:
\[ \langle A_i, B_j \rangle = \langle a_i a_i^T, b_j b_j^T \rangle = (\langle a_i, b_j \rangle)^2 = M_{ij}^2 = (M\hadprod M)_{ij}. \]
Hence $\rankpsd (M\hadprod M) \leq \rank(M)$.
\end{itemize}
\end{proof}

%Property (v) from the previous theorem motivates the definition of a new quantity called \emph{square-root rank} which is an upper bound to the psd rank. This quantity will be studied in more detail in Section \ref{sec:relationships_between_ranks}.

The next theorem analyzes the psd rank of block-triangular matrices (the result below was also found independently by G{\'a}bor Braun and Sebastian Pokutta as well as in \cite{TroyLeeHermitianPsdRank}):

\begin{theorem}
\label{thm:blktri-ineq}
Let $P \in \RR^{p_1\times q_1}_+, Q \in \RR^{p_2\times q_1}_+, R\in \RR^{p_2\times q_2}_+$ be nonnegative matrices and let $M$ be the block matrix of size $(p_1+p_2)\times (q_1+q_2)$:
\[ M = \begin{bmatrix} P & 0\\ Q & R \end{bmatrix}. \]
Then
\begin{equation}
\label{eq:blktri-ineq}
 \rankpsd(M) \geq \rankpsd(P) + \rankpsd(R).
\end{equation}
Furthermore, when $Q = 0$ we have equality.
\end{theorem}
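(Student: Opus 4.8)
The plan is to prove the inequality \eqref{eq:blktri-ineq} first, and then deduce the equality statement when $Q=0$ almost for free. For the inequality, let $k=\rankpsd(M)$ and fix a psd factorization $M_{ij}=\langle A_i,B_j\rangle$ of size $k$, indexing the first $p_1$ matrices $A_i$ by the rows of $P$ and the last $p_2$ by the rows of $R$, and similarly the first $q_1$ matrices $B_j$ by the columns of $P$ (and $Q$) and the last $q_2$ by the columns of $R$. The key observation is that the zero block in the top-right of $M$ forces $\langle A_i,B_j\rangle = M_{ij}=0$ whenever $i$ indexes a row of $P$ and $j$ a column of $R$; since $A_i,B_j\in\S^k_+$, Proposition~\ref{prop:orthpsd} gives $A_iB_j=0$, i.e.\ $\range(B_j)\subseteq\ker A_i=\range(A_i)^\perp$ using symmetry of $A_i$.

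Next I would set $V:=\sum_i \range(A_i)$ over the rows $i$ of $P$ and $W:=\sum_j \range(B_j)$ over the columns $j$ of $R$. The orthogonality just derived shows $V\perp W$, hence $\dim V+\dim W\le k$. It then remains to show $\rankpsd(P)\le\dim V$ and $\rankpsd(R)\le\dim W$. For the first, let $\Pi_V$ be the orthogonal projection onto $V$: since $\range(A_i)\subseteq V$ for each row $i$ of $P$ we have $A_i=\Pi_V A_i\Pi_V$, and for each column $j$ of $P$ the matrix $B_j':=\Pi_V B_j\Pi_V$ is psd and satisfies $\langle A_i,B_j'\rangle=\trace(\Pi_V A_i\Pi_V B_j)=\langle A_i,B_j\rangle=P_{ij}$ by cyclicity of the trace. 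So $(A_i)_i$, $(B_j')_j$ form a psd factorization of $P$ whose matrices all have range inside $V$, i.e.\ a factorization of size $\dim V$ after restricting to an orthonormal basis of $V$; thus $\rankpsd(P)\le\dim V$. The argument for $R$ is entirely symmetric, projecting onto $W$ and using $\range(B_j)\subseteq W$ for the columns $j$ of $R$. Combining, $\rankpsd(P)+\rankpsd(R)\le\dim V+\dim W\le k=\rankpsd(M)$.

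For the equality when $Q=0$, the matrix $M$ is block-diagonal, and one only needs the reverse inequality $\rankpsd(M)\le\rankpsd(P)+\rankpsd(R)$. Take optimal psd factorizations $P_{ij}=\langle A_i,B_j\rangle$ of size $k_1=\rankpsd(P)$ and $R_{ij}=\langle C_i,D_j\rangle$ of size $k_2=\rankpsd(R)$, and form the block-diagonal matrices $A_i\oplus 0_{k_2}$ and $0_{k_1}\oplus C_i$ for the rows of $M$, and $B_j\oplus 0_{k_2}$ and $0_{k_1}\oplus D_j$ for its columns. These are psd of size $k_1+k_2$, and their pairwise inner products reproduce every entry of $M$, including the zeros in both off-diagonal blocks. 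Together with the already-proved lower bound this yields equality.

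I do not expect a genuine obstacle: the proof is short, and its only real content is the extraction of two sub-factorizations supported on orthogonal subspaces. The one point to get right is that orthogonal projection of the opposite family of factors onto $V$ (resp.\ $W$) leaves the relevant inner products unchanged, which is immediate from $\Pi_V^2=\Pi_V=\Pi_V^\top$, cyclicity of the trace, and the fact that $\Pi_V$ maps $\S^k_+$ into $\S^k_+$.
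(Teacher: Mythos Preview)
Your proof is correct and follows essentially the same approach as the paper: both exploit Proposition~\ref{prop:orthpsd} on the zero block to obtain orthogonal subspaces, then extract sub-factorizations of $P$ and $R$ supported on those subspaces. The only cosmetic difference is that the paper picks a single subspace $F=\sum_j\range(\widehat{B_j})$ (your $W$) and works with the splitting $F\oplus F^\perp$ via an explicit orthonormal change of basis, whereas you keep both $V$ and $W$ and use orthogonal projections; these are equivalent formulations of the same argument.
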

\begin{proof}
We first show the inequality \eqref{eq:blktri-ineq}. Assume the matrix $M$ has a psd factorization of size $k$ where the $p_1+p_2$ row factors are called $A_1,\dots,A_{p_1},\widehat{A_1},\dots,\widehat{A_{p_2}} \in \S^k_+$ and the $q_1+q_2$ column factors are $B_1,\dots,B_{q_1},\widehat{B_1},\dots,\widehat{B_{q_2}} \in \S^k_+$. Since the upper-right block of $M$ is zero, we have for all $(i,j) \in [p_1]\times [q_2]$, $\langle A_i, \widehat{B_j} \rangle = 0$. Hence, by Proposition \ref{prop:orthpsd}, $A_i \widehat{B_j} = 0$. Thus if we let $F=\sum_{i=1}^{q_2} \range(\widehat{B_j}) \subseteq \RR^k$, we have that $F \subseteq \ker(A_i)$ for all $i=1,\dots,p_1$. Since $A_i$ is symmetric this is equivalent to $\range(A_i) \subseteq F^{\perp}$.
Let $U$ be an orthonormal matrix whose columns consist of an orthonormal basis for $F^{\perp}$ concatenated with an orthonormal basis of $F$. Since $\range(A_i) \subseteq F^{\perp}$ we know that $A_i$ has the form:
\begin{equation}
\label{eq:Aiblk} A_i = U \begin{bmatrix} A'_i & 0\\ 0 & 0 \end{bmatrix} U^T \end{equation}
where $A'_i$ is of size $d$ where $d = \dim F^{\perp}$. Furthermore, since $\range(\widehat{B_j}) \subseteq F$ we have:
\begin{equation}  \widehat{B_j} = U \begin{bmatrix} 0 & 0\\ 0 & \widehat{B_j'} \end{bmatrix} U^T\end{equation}
where $\widehat{B_j'}$ is of size $k-d=\dim F$. Note that if we conjugate all the factors of the psd factorization of $M$ by $U$ (i.e., replace $A_i$ by $U^T A_i U$, etc.) we get another valid psd factorization of $M$ of the same size. Thus we can assume without loss of generality that $U=I$ and that $A_i$ and $\widehat{B_j}$ are block-diagonal.

If we now let $B_j'$ be the upper-left $d\times d$ block of $B_j$, then we have $P_{ij} = \langle A_i, B_j \rangle = \langle A_i', B_j' \rangle$, since $A_i$ has the block-diagonal structure \eqref{eq:Aiblk} (with $U=I$). Thus this shows that $\rankpsd(P) \leq d$.
Similarly, if we let $\widehat{A_i'}$ be the bottom-right $(k-d)\times (k-d)$ block of $\widehat{A_i}$, then we get $R_{ij} = \langle \widehat{A_i}, \widehat{B_j} \rangle = \langle \widehat{A_i'}, \widehat{B_j'} \rangle$ and thus $\rankpsd(R) \leq k-d$. Thus we finally get that $\rankpsd(P) + \rankpsd(R) \leq d+(k-d) = k$ which is the inequality we want.

We now show that when $Q=0$ we have $\rankpsd(M) = \rankpsd(P)+\rankpsd(R)$. Indeed let $P_{ij} = \langle C_i, D_j \rangle$ and $R_{ij} = \langle E_i, F_j \rangle$ be psd factorizations of $P$ and $R$ respectively of size $\rankpsd(P)$ and $\rankpsd(R)$.
Define:
\[ A_i = \begin{bmatrix} C_i & 0\\ 0 & 0 \end{bmatrix} \quad \forall i \in [p_1], \quad \widehat{A}_i = \begin{bmatrix} 0 & 0\\ 0 & E_i \end{bmatrix} \quad \forall i \in [p_2] \]
and
\[ B_j = \begin{bmatrix} D_j & 0\\ 0 & 0 \end{bmatrix} \quad \forall j \in [q_1],  \quad
\widehat{B}_j = \begin{bmatrix} 0 & 0\\ 0 & F_j \end{bmatrix} \quad \forall j \in [q_2]. \]
It is easy to see that the factors $A_1,\dots,A_{p_1},\widehat{A}_1,\dots,\widehat{A}_{p_2}$ and $B_1,\dots,B_{q_1},\widehat{B}_1,\dots,\widehat{B}_{q_2}$ give a psd factorization of the block-diagonal matrix $\begin{bmatrix} P & 0\\ 0 & R \end{bmatrix}$ of size $\rankpsd(P) + \rankpsd(R)$. Thus this shows, together with the inequality proved above, that
\[ \rankpsd \begin{bmatrix} P & 0\\ 0 & R \end{bmatrix} = \rankpsd(P) + \rankpsd(R). \]
\end{proof}

\begin{example}
A consequence of Theorem \ref{thm:blktri-ineq} is that the psd rank of the identity matrix $I_n$ is equal to $n$, since $I_n = \diag(1,\dots,1)$. In fact more generally the psd rank of a nonnegative diagonal matrix is equal to the number of nonzero diagonal elements.
\end{example}

\paragraph{Kronecker product} \label{sec:kron}
 The Kronecker (tensor) product of two matrices $M \in \RR^{p\times q}$ and $N \in \RR^{p'\times q'}$ is the $pp'\times qq'$ matrix $M\otimes N$ defined by:
\[ M\otimes N =
\begin{bmatrix}
M_{11} N & \dots & M_{1q} N\\
\vdots &       & \vdots\\
M_{p1} N & \dots & M_{pq} N
\end{bmatrix}.
\]
It is well-known that the rank of the Kronecker product $M\otimes N$ is equal to the product of the ranks of $M$ and $N$: $\rank(M\otimes N) = \rank(M) \rank(N)$.
 A natural question is to know whether the same is true for the psd rank.  In \cite{TroyLeeHermitianPsdRank} the authors give a counterexample to this, where they show that the psd rank of $M\otimes N$ can be strictly smaller than $\rankpsd(M)\rankpsd(N)$ (note that the inequality $\rankpsd (M\otimes N) \leq \rankpsd(M) \rankpsd(N)$ is always true).

\subsection{Lower semicontinuity of psd rank} In this subsection we show that for any $k\in \NN$, the set of matrices of psd rank $\leq k$ is closed (under the standard topology in $\RR^{p\times q}$). We prove the following:
\begin{theorem}
\label{thm:lsc}
Let $(M^n)_{n \in \NN}$ be a sequence of nonnegative matrices
converging to $M \in \RR^{p\times q}_+$ such that $\rankpsd (M^n) \leq
k$ for all $n \in \NN$. Then $\rankpsd(M) \leq k$.
\end{theorem}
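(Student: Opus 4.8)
The plan is a compactness argument. For each $n$ I would pick a psd factorization of $M^n$, use the gauge freedom available in psd factorizations (simultaneous congruence of the factors) to normalize it so that all factor matrices lie in a fixed compact set, extract a convergent subsequence of factorizations, and pass to the limit. The one thing that can go wrong is that the factors of $M^n$ blow up as $n\to\infty$, and ruling this out is the crux.

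\emph{Reduction to a good factorization.} For each $n$ fix a psd factorization $M^n_{ij}=\langle A^n_i,B^n_j\rangle$ of size $k_n:=\rankpsd(M^n)\le k$. If $M^n\neq 0$, I claim $C^n:=\sum_{i=1}^p A^n_i$ is positive definite in $\S^{k_n}$: if $C^n x=0$ then $\sum_i x^T A^n_i x=0$ with all summands $\ge 0$, so $A^n_i x=0$ for every $i$; hence each $\range(A^n_i)$ lies in the proper subspace $V:=\range(C^n)$, and --- exactly as in the proof of Theorem~\ref{thm:blktri-ineq}, conjugating by an orthonormal basis adapted to $V$ and passing to principal submatrices (a principal submatrix of a psd matrix is psd) --- we obtain a psd factorization of $M^n$ of size $\dim V<k_n$, contradicting the minimality of $k_n$. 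Since $k_n\in\{0,1,\dots,k\}$, pass to a subsequence along which $k_n$ equals a constant $k'$ (if $k'=0$ then $M^n\equiv 0$ along the subsequence, so $M=0$ and $\rankpsd(M)=0\le k$; assume henceforth $k'\ge 1$).

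\emph{Normalization and limit.} Replacing $A^n_i$ by $(C^n)^{-1/2}A^n_i(C^n)^{-1/2}$ and $B^n_j$ by $(C^n)^{1/2}B^n_j(C^n)^{1/2}$ yields, by cyclicity of the trace, another psd factorization of $M^n$ of size $k'$, now with $\sum_i A^n_i=I_{k'}$; in particular $\trace(A^n_i)\le k'$ for all $i$. Moreover $\sum_j\trace(B^n_j)=\trace\!\big(\sum_j B^n_j\big)=\big\langle\sum_i A^n_i,\sum_j B^n_j\big\rangle=\sum_{i,j}M^n_{ij}$, which converges to $\sum_{i,j}M_{ij}$ and is therefore bounded by some $\Sigma$ for all large $n$; since each $B^n_j\succeq 0$ this forces $\trace(B^n_j)\le\Sigma$ for all $j$. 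Hence for all large $n$ every $A^n_i$ and $B^n_j$ lies in the compact set $\{X\in\S^{k'}_+:\trace(X)\le\max(k',\Sigma)\}$, so a further subsequence makes $A^n_i\to A_i$ and $B^n_j\to B_j$ with $A_i,B_j\in\S^{k'}_+$. By continuity of the inner product $\langle A_i,B_j\rangle=\lim_n\langle A^n_i,B^n_j\rangle=\lim_n M^n_{ij}=M_{ij}$, so $M$ has a psd factorization of size $k'\le k$ and $\rankpsd(M)\le k$.

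The main obstacle is the first reduction: to justify the congruence normalization $\sum_i A^n_i=I_{k'}$ one needs $\sum_i A^n_i$ invertible, which is precisely what can fail and which is salvaged by working with a factorization of minimal size. Once past that point, the uniform trace bound and the extraction of a convergent subsequence are routine.
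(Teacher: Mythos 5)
Your proof is correct and follows the same strategy as the paper: renormalize the factorization of $M^n$ by congruence with $\big(\sum_i A^n_i\big)^{-1/2}$ so that $\sum_i A^n_i = I$, deduce uniform trace bounds on both families of factors, and extract a convergent subsequence (the paper encapsulates the rescaling step as Lemma~\ref{lem:simplescaling}, proved later as Lemma~\ref{lem:simplescaling2}). Your explicit treatment of the case where $\sum_i A^n_i$ is singular---invoking minimality of the factorization size to force invertibility---addresses a point the paper's lemma silently passes over, but otherwise the two arguments are essentially identical.
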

\begin{proof}
The main ingredient to prove this result is to show that the factors $A_i, B_j$ in a psd factorization can always be chosen to be bounded. We have the following lemma:
\begin{lemma}
\label{lem:simplescaling}
Let $M \in \RR^{p\times q}_+$ and assume that $M$ has a psd factorization of size $k$. Then $M$ admits a psd factorization $M_{ij} =\langle A_i, B_j \rangle$ of size $k$ where the factors satisfy $\trace(A_i)\leq k$ and $\trace(B_j) = \sum_{i=1}^p M_{ij}$.
\end{lemma}
\begin{proof}
We defer the proof of this Lemma to Section \ref{sec:properties} where we discuss in more detail the issue of scaling the factors in a psd factorization.
\end{proof}
Let $(M^n)$ be a sequence of nonnegative matrices converging to $M$. Since $(M^n)$ is a convergent sequence the entries of $M^n$ are all bounded from above by some positive constant (independent of $n$).
The previous lemma shows that for each $n$, one can find a psd factorization of $M^n$ of the form:
\[ (M^n)_{ij} = \langle A^n_i, B^n_j \rangle \]
where $A^n_i, B^n_j \in \S^k_+$ and such that the sequences $(A^n_i)_{n \in \NN}$ and $(B^n_j)_{n \in \NN}$ are all bounded in $\S^k$. Thus one can extract convergent subsequences $(A^{\phi(n)}_i)$, $(B^{\phi(n)}_j)$ where $\phi:\NN\rightarrow \NN$ is increasing and $A^{\phi(n)}_i \rightarrow A_i$ and $B^{\phi(n)}_j \rightarrow B_j$ when $n\rightarrow +\infty$. Since $\S^k_+$ is closed we have $A_i, B_j \in \S^k_+$ and we get:
\[ M_{ij} = \lim_{n \rightarrow +\infty} \langle A^{\phi(n)}_i, B^{\phi(n)}_j \rangle = \langle A_i, B_j \rangle \]
which is a valid psd factorization of $M$ of size $k$. Thus $\rankpsd(M) \leq k$.
\end{proof}
\begin{remark}
The result above shows that the function $\rankpsd:\RR^{p\times q}_+ \rightarrow \NN$ is \emph{lower semi-continuous}, i.e., for any convergent sequence $M^n \rightarrow M$ it holds:
\[ \rankpsd (M) \leq \liminf_{n\rightarrow +\infty} \rankpsd (M^n). \]
%It is also known that the nonnegative rank is also lower semi-continuous as was shown in \cite{bocci2011perturbation}. The usual rank for matrices is also well-known to be lower semi-continuous
It is well-known that the usual rank 
%\footnote{The usual rank for matrices is lower-semicontinuous. However the rank of tensors of order $\geq 3$ is not lower-semicontinuous and this gives rise to the notion of border rank.} 
is also lower-semicontinuous, as well as the nonnegative rank (cf.  \cite{bocci2011perturbation} for the lower semi-continuity property of the nonnegative rank).
However, some notions of rank can fail to have this property. A well-known example is the rank of tensors of order $\geq 3$ which is not lower-semicontinuous, giving rise to the notion of border rank.
\end{remark}
%\begin{remark}
%The proof of Theorem \ref{thm:lsc} above relies crucially on the \emph{scaling argument} used in Lemma \ref{lem:simplescaling} which shows that the factors can always be taken to be bounded. In Section \ref{sec:properties} we discuss in more detail the issue of scaling of the factors.
%\end{remark}
%

%%%%%%%%%%%%%%%%%%%%%%%%%%%%%%%%%%%%%%%%%%%%%%%%%%%%%%%%%%%%%%%%%%%%%%%%%%
\section{Motivation and examples}
\label{sec:motivation}
\subsection{Geometric interpretation} \label{subsec:geo interpretation}

In this section we discuss the geometric interpretation of the psd rank. This interpretation was in fact the original motivation that led to the definition of the psd rank in \cite{gouveia2011lifts}.

\emph{Semidefinite programming} is the problem of optimizing a linear function over an affine slice of the psd cone:
\[ \text{minimize} \; L(X) \; \text{ subject to } \; X \in  \S^k_+ \cap \cL \]
where $L:\S^k \rightarrow \RR$ is a linear function and $\cL$ is an affine subspace of $\S^k$. The feasible set $\S^k_+ \cap \cL$ of a semidefinite program is known as a \emph{spectrahedron} and can also be written as the solution set of a linear matrix inequality $\{x \in \RR^d : A_0 + x_1 A_1 + \dots + x_d A_d \succeq 0 \}$ where the $A_i$ are symmetric matrices that span the subspace $\cL$.
%Given symmetric matrices $A_0,A_1,\dots,A_p \in \S^r_+$, consider the following set $T \subset \RR^p$:
%\[ T = \{x \in \RR^p \; : \; A_0 + x_1 A_1 + \dots + x_p A_p \succeq 0 \} \]
%where $\succeq 0$ denotes positive semidefiniteness. Such a set $T$ is known as a \emph{spectrahedron} and it is easy to verify that it is convex. The importance of spectrahedra come from their application in optimization, where the problem of optimizing a linear function over a spectrahedron is known as a \emph{semidefinite program}.
Semidefinite programs can be solved to arbitrary precision in polynomial-time, and have many applications in different areas of science and engineering \cite{vandenberghe1996semidefinite}.
%such as in control theory, signal processing, coding theory, combinatorial optimization, etc. .
%The size of a spectrahedron is equal to the size of the psd matrices involved (for the spectrahedron $T$, this is the size of the matrices $A_i$).

Let $P \subset \RR^n$ be a polytope and assume we want to minimize a linear function $\ell$ over $P$, i.e., we want to compute $\min \{ \ell(x) : x \in P \}$. Observe that if $P$ admits a representation of the form
\begin{equation}
 \label{eq:psdlift}
 P = \pi(\S^k_+ \cap \cL)
\end{equation} where $\cL \subset \S^k$ is an affine subspace and $\pi$ is a linear map, then one can write the linear optimization problem over $P$ as a semidefinite program of size $k$, since:
\[ \min_{x \in P} \ell(x) = \min_{y \in \S^k_+ \cap \cL} \ell\circ \pi(y) \]
and $\ell \circ \pi$ is linear. A representation of the polytope $P$ of the form \eqref{eq:psdlift} is called a \emph{psd lift} of size $k$. Such a representation is interesting in practice when the size $d$ of the lift is much smaller than the number of facets of $P$, which is the size of the trivial representation of $P$ using linear inequalities.
% very advantageous as it greatly reduces the size of the optimization problem; in other words it is sometimes possible to write $P=\pi(\S^k_+ \cap \cL)$ where $d$ is much smaller than the size of the trivial representation of $P$ using linear inequalities.
A natural question to ask is thus: what is the smallest $k$ such that $P$ admits a psd lift of size $k$?

It turns out that the answer to this question is tightly related to the psd rank considered in this paper. For this we need to introduce the notion of a slack matrix:
\begin{definition}
\label{def:slack}
Let $P \subset \RR^n$ be a polytope (i.e., a bounded polyhedron) and $Q \subset \RR^n$ be a polyhedron with $P\subseteq Q$. Let $x_1,\dots,x_v$ be such that $P=\conv(x_1,\dots,x_v)$ and let $a_j \in \RR^n,b_j \in \RR$, $(j=1,\dots,f)$ be such that $Q=\{x \in \RR^n : a_j^T x \leq b_j \; \forall j\}$. Then the slack matrix of the pair $P,Q$, denoted $S_{P,Q}$ is the nonnegative $v\times f$ matrix whose $(i,j)$-th entry is $b_j - a_j^T x_i$. When $P=Q$ we write $S_{P} := S_{P,P}$ and we call it the slack matrix of $P$.
\end{definition}
\begin{remark}
Note that the entries of slack matrix of $S_{P,Q}$ can depend on the inequality description of $Q$ and the vertex description of $P$ (e.g., different scalings, redundant inequalities), however it is not hard to see that these do not affect the various ranks of the matrix, namely the usual rank, nonnegative rank and psd rank. Thus we will often refer to a slack matrix of a pair $P,Q$ as ``the'' slack matrix of $P,Q$.
%Note that the entries of the slack matrix $S_{P,Q}$ can depend on the scaling used for the facet inequalities, e.g., if we use inequalities $2a_j^T x \leq 2b_j$ instead of $a_j^T x \leq b_j$ then the matrix is scaled by a factor 2. This scaling however does not affect the various ranks of the matrix (usual rank, nonnegative rank, psd rank).
\end{remark}
The next theorem gives an answer to the question of psd lifts formulated above, using the psd rank: it shows that the size of the smallest psd lift of a polytope $P$ is equal to  the psd rank of the slack matrix $S_P$ of $P$ (this is the case $P=Q$ of the statement below).
%The next theorem shows
% In fact one can show more generally the following theorem:
\begin{theorem}[cf. Proposition 3.6 in \cite{gouveia2013worst}]
\label{thm:psdlift}
Let $P\subset \RR^n$ be a polytope and $Q\subset \RR^n$ be a polyhedron such that $P\subseteq Q$, and let $S_{P,Q}$ be the slack matrix of the pair $P,Q$ (cf. Definition \ref{def:slack}). Then $\rankpsd S_{P,Q}$ is the smallest integer $k$ for which there exists an affine subspace $\cL$ of $\S^k$ and a linear map $\pi$ such that $P\subseteq \pi(\S^k_+ \cap \cL) \subseteq Q$.
\end{theorem}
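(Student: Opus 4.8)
The plan is to prove the two inequalities separately: that the slack matrix factorizes whenever a small lift exists, and conversely that a factorization of size $k$ yields a lift of size $k$. This is the positive-semidefinite analogue of Yannakakis's theorem \cite{Yannakakis}, and the argument is the specialization to $K=\S^k_+$ of the general cone-factorization correspondence \cite{gouveia2011lifts}. Fix notation: write $P=\conv(x_1,\dots,x_v)$ and $Q=\{x\in\RR^n:a_j^Tx\le b_j,\ j=1,\dots,f\}$, so that $(S_{P,Q})_{ij}=b_j-a_j^Tx_i$, and let $\Phi:\RR^n\to\RR^f$ be the affine map $\Phi(x)=(b_j-a_j^Tx)_j$, so that $\Phi(x_i)$ is the $i$-th row of $S_{P,Q}$. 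Replacing $\RR^n$ by $\text{aff}(P)$ and adjusting the description of $Q$ accordingly — operations that change neither $\rankpsd S_{P,Q}$ nor the smallest lift size — we may assume that $\Phi$ is injective; this kind of normalization is routine and we suppress it, pointing to \cite{gouveia2013worst} for the bookkeeping.

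Suppose first that $\rankpsd S_{P,Q}=k$ and fix a psd factorization $(S_{P,Q})_{ij}=\langle A_i,B_j\rangle$ with $A_i,B_j\in\S^k_+$. Let $\Psi:\S^k\to\RR^f$ be the linear map $\Psi(Y)=(\langle B_j,Y\rangle)_j$; then $\Psi(A_i)=\Phi(x_i)$ for all $i$. Set $\cL:=\Psi^{-1}(\Phi(\RR^n))$, an affine subspace of $\S^k$ containing every $A_i$, and define $\pi:\cL\to\RR^n$ by $\pi:=\Phi^{-1}\circ\Psi$, which makes sense because $\Psi(Y)\in\Phi(\RR^n)$ for $Y\in\cL$ (this $\pi$ is affine on $\cL$; one obtains a linear map as required by the statement after a translation of coordinates, or by passing through the homogenization of $P$). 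By construction $\pi(A_i)=x_i$, and since each $A_i\in\S^k_+\cap\cL$ and $\pi(\S^k_+\cap\cL)$ is convex, $P\subseteq\pi(\S^k_+\cap\cL)$. Conversely, if $Y\in\S^k_+\cap\cL$ then $\Phi(\pi(Y))=\Psi(Y)$, i.e. $b_j-a_j^T\pi(Y)=\langle B_j,Y\rangle\ge 0$ because both $B_j$ and $Y$ lie in $\S^k_+$; hence $\pi(Y)\in Q$. This is a psd lift of size $k$, so the smallest lift size is at most $\rankpsd S_{P,Q}$.

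For the reverse inequality, assume $P\subseteq\pi(\S^k_+\cap\cL)\subseteq Q$ with $\cL\subseteq\S^k$ an affine subspace and $\pi$ linear. For each vertex $x_i$ pick $A_i\in\S^k_+\cap\cL$ with $\pi(A_i)=x_i$. For each facet index $j$, the function $g_j(Y):=b_j-a_j^T\pi(Y)$ is affine on $\cL$, satisfies $g_j(A_i)=(S_{P,Q})_{ij}$, and is nonnegative on $\S^k_+\cap\cL$ since $\pi$ sends that set into $Q$. The key point is an extension statement: an affine function that is nonnegative on the spectrahedron $\S^k_+\cap\cL$ coincides on $\cL$ with $Y\mapsto\langle B,Y\rangle$ for some $B\in\S^k_+$. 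Granting this, choose such a $B_j$ for each $j$; then $\langle A_i,B_j\rangle=g_j(A_i)=(S_{P,Q})_{ij}$ for all $i,j$, so the matrices $A_i,B_j$ form a psd factorization of $S_{P,Q}$ of size $k$, whence $\rankpsd S_{P,Q}\le k$. Combining with the previous paragraph gives equality.

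I expect the extension statement of the previous paragraph to be the main obstacle. It is a form of conic duality for the self-dual cone $\S^k_+$: a linear functional nonnegative on $\S^k_+$ is exactly $\langle B,\cdot\rangle$ with $B\in\S^k_+$, and what one needs is the restricted version of this on an affine slice. To make it hold unconditionally one must first replace $\S^k_+$ by the smallest face containing $\S^k_+\cap\cL$ — a face isomorphic to $\S^{k'}_+$ for some $k'\le k$, inside which $\S^{k'}_+\cap\cL$ meets the relative interior, so that strong conic duality applies and produces $B$. This facial-reduction step, the affine-hull normalization flagged at the outset, and the (standard) reconciliation of affine versus linear $\pi$ are the only delicate parts; everything else is routine bookkeeping.
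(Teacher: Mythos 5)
Your proposal follows essentially the same route as the paper's sketch: a two-directional Yannakakis-style correspondence, with the lift-to-factorization direction hinging on conic Farkas/duality for the psd cone. The main presentational difference is in the factorization-to-lift direction, where you construct $\cL$ and $\pi$ explicitly via the maps $\Psi(Y)=(\langle B_j,Y\rangle)_j$ and $\Phi(x)=(b_j-a_j^Tx)_j$, whereas the paper directly exhibits the intermediate convex body $C=\{x:\exists A\succeq0,\ b_j-a_j^Tx=\langle A,B_j\rangle\ \forall j\}$ and defers the representation $C=\pi(\S^k_+\cap\cL)$ to the cited reference; these are two notations for the same object. In the converse direction your ``extension statement'' is precisely the paper's appeal to Farkas' lemma to produce $B_j\succeq0$ with $b_j-a_j^Tx=\langle T(x,y),B_j\rangle$, and you are right that a constraint qualification is needed there.

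One caution, though: your facial-reduction fix addresses failure of Slater's condition, but it does not by itself make the extension statement true. Even with Slater satisfied, a nonnegative \emph{affine} function on $\S^k_+\cap\cL$ need only equal $\langle B,\cdot\rangle + c$ for some $B\succeq0$ and \emph{some constant} $c\geq 0$; forcing $c=0$ is a separate matter. (For instance, take $\cL=\bigl\{\left[\begin{smallmatrix}z&1\\1&z\end{smallmatrix}\right]:z\in\RR\bigr\}$, so $\S^2_+\cap\cL=\{z\ge1\}$ has nonempty interior, and $\ell(z)=z+4$; one checks there is no $B\succeq0$ with $\langle B,\cdot\rangle=\ell$ on $\cL$.) The actual remedy in the slack-matrix setting is the homogenization you mention only in passing (for reconciling affine vs.\ linear $\pi$): passing to the conic slack matrix, as the paper does in Definition~\ref{def:cone slack} and the remarks after Proposition~\ref{prop:space of factorizations}, turns $\ell$ into a genuinely linear functional and the additive constant disappears, so Farkas (together with your facial reduction when Slater fails) delivers $B_j\succeq 0$ with exact equality. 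So the homogenization is doing more work in your argument than you give it credit for, and the paper's ``Sketch of proof'' is equally terse on this point, deferring to \cite{gouveia2013worst}.
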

\begin{proof}[Sketch of proof]
Let $k = \rankpsd S_{P,Q}$. We first show how to construct a spectrahedron $\S^k_+ \cap \cL$ of size $k$ such that $P \subseteq \pi(\S^k_+ \cap \cL) \subseteq Q$ for some linear map $\pi$. Let $x_1,\dots,x_v$ be the vertices of $P$ and let $Q = \{x \in \RR^n : a_j^T x \leq b_j \, \forall j=1,\dots,f\}$ be a facet description of $Q$. Let $A_1,\dots,A_v,B_1,\dots,B_f \in \S^k_+$ be a psd factorization of $S_{P,Q}$ of size $k$:
\[ b_j - a_j^T x_i = \langle A_i, B_j \rangle \quad \forall i=1,\dots,v, \; j=1,\dots,f. \]
Consider the convex set $C$:
\begin{equation}
\label{eq:defC}
C = \{x \in \RR^n : \exists A \in \S^k_+ \text{ such that } b_j - a_j^T x = \langle A , B_j \rangle \; \forall j=1,\dots,f \}.
\end{equation}
It is easy to verify that $C$ is contained between $P$ and $Q$: indeed $C \subseteq Q$ because any $x \in C$ satisfies $b_j - a_j^T x \geq 0$ for all $j=1,\dots,f$; also $P \subseteq C$ because the vertices $x_i$ of $P$ satisfy \eqref{eq:defC} with $A = A_i$.
Also it is not too difficult to show that $C$ can be expressed in the desired form $C = \pi(\S^k_+ \cap \cL)$ where $\cL$ is an affine subspace of $\S^k$ and $\pi$ is a linear projection map (we refer to \cite[Proposition 3.6]{gouveia2013worst} for the details). Thus this proves the first direction.

Assume now that we can write $P \subseteq \pi(\S^k_+ \cap \cL) \subseteq Q$ where $\cL$ is an affine subspace of $\S^k$ and $\pi$ is a linear map. We show how to construct a psd factorization of $S_{P,Q}$ of size $k$. Let $C = \pi(\S^k_+ \cap \cL)$. Using a suitable choice of basis for $\cL$, we can assume that $C$ has the form:
\[ C = \{ x \in \RR^n \; : \; \exists y \in \RR^m \text{ such that } T(x,y) \in \S^k_+ \} \]
where $T : \RR^n\times \RR^m \rightarrow \S^k$ is an affine linear map (i.e., $T$ has the form $T(x,y) = U_0 + x_1 U_1 + \dots + x_n U_n + y_1 V_1 + \dots + y_m V_m$ for some $U_0,U_1,\dots,U_n,V_1,\dots,V_m \in \S^k$). Observe that since $C \subseteq Q$ we have for any $(x,y) \in \RR^n \times \RR^m$:
\[ T(x,y) \in \S^k_+ \; \Rightarrow \; b_j - a_j^T x \geq 0 \; \forall j=1,\dots,f. \]
By Farkas' lemma this means that, for any $j=1,\dots,f$, there exists $B_j \in \S^k_+$ such that:
\[
b_j - a_j^T x = \langle T(x,y), B_j \rangle \quad \forall (x,y) \in \RR^{n}\times \RR^m.
\]
Furthermore, since $P \subseteq C$ we know that for any $x_i$ vertex of $P$ there exists $y_i$ such that $T(x_i,y_i) \in \S^k_+$. Thus if we let $A_i = T(x_i,y_i)$ then we get the following psd factorization of size $k$ of the slack matrix $S_{P,Q}$:
\[ b_j - a_j^T x_i = \langle A_i, B_j \rangle \quad \forall i=1,\dots,v,j=1,\dots,f. \]
This completes the proof.
%
%
%
%
%Now note that one can express the set $C$ in the desired form  Indeed let $\cL'$ be the affine subspace of $\RR^n \times \S^k$ given by:
%\[ \cL' = \{(x,A) \in \RR^n \times \S^k_+ \; : \; b_j - a_j^T x = \langle A , B_j \rangle \forall j=1,\dots,f\}. \]
%Let $\pi_x$ be the projection onto the $x$-coordinates.
%Note that $C$ is the projection onto the $x$-coordinates of $\cL' \cap (\RR^n \times \S^k_+)$. Let $\cL$ be the affine subspace of $\S^k$ obtained as the projection of $\cL'$ onto the $A$-coordinates.
%Note that the set $C$ is the projection onto the $x$ coordinates of the set
%\[ \{ (x,A) \in \RR^n \times \S^k_+ \; : \; 
\end{proof}
Note that the proof of Theorem \ref{thm:psdlift} is constructive: it shows how to construct the spectrahedron $\S^k_+ \cap \cL$ and the linear map $\pi$ from a psd factorization of $S_{P,Q}$, and vice-versa.
%Let us mention that the proof of the theorem above is constructive and shows that any psd factorization of size $r$ can be used to construct a spectrahedron $T$ of size $r$ and a linear map $\pi$ such that $P \subseteq \pi(T) \subseteq Q$; and vice-versa.

\begin{remark}
\label{rem:slackmatrices}
 The geometric interpretation of the psd rank given in Theorem \ref{thm:psdlift} can be used to study the psd rank of any \emph{arbitrary} nonnegative matrix $M$, since one can show that any nonnegative matrix $M$ is the slack matrix of some pair of polytopes $P,Q$.  We use this geometric interpretation in Section \ref{sec:psdrank2_convexprogramming} to show that one can use semidefinite programming to decide if $\rankpsd M \leq 2$. Note that if $P,Q$ are full-dimensional polytopes in $\RR^n$, then the usual rank of the slack matrix $S_{P,Q}$ is equal to $n+1$. For example if $M$ is a nonnegative matrix with rank two, then it is the slack matrix of two nested intervals in the real line. It thus follows easily from this geometric interpretation and from Theorem \ref{thm:psdlift}  that the psd rank of any rank-two matrix is equal to 2 (this was already shown in Proposition \ref{prop:rank1rank2} using a result from \cite{cohen1993nonnegative}).
% Indeed let $M \in \RR^{p\times q}_+$ be a nonnegative matrix and assume that $M$ has no row that is identically zero. For such an $M$ we have $M\onevec > 0$ (where $\onevec$ is the all-ones vector) and so using a simple diagonal scaling we can assume that $\onevec \in \colspan(M)$. Let $k=\rank M$ (standard rank) and let $M=UV$ be a rank factorization of $M$ where $U \in \RR^{p\times k}$ and $V \in \RR^{k \times q}$ and where the rows of $U$ are of the form $(1,u_i)^T$ where $u_i \in \RR^{k-1}$. Define $P = \conv(u_1,\dots,u_p) \subset \RR^{k-1}$ and $Q = \{x \in \RR^{k-1} \; : \; (1,x)^T V \geq 0 \}$. Then one can easily verify that $M$ is the slack matrix of the pair $P,Q \subset \RR^{k-1}$.
\end{remark}

We now illustrate Theorem \ref{thm:psdlift} using two simple examples.

\begin{example} Let $P = Q = [-1,1]^2$ be the square in the plane.
 The polytope $P$ has 4 facets and 4 vertices and the slack matrix of $P$ can be shown to be equal to the following $4\times 4$ matrix:
\begin{equation}
\label{eq:Msquare}
 M = \begin{bmatrix}
1 & 1 & 0 & 0\\
0 & 1 & 1 & 0\\
0 & 0 & 1 & 1\\
1 & 0 & 0 & 1
\end{bmatrix}.
 \end{equation}
One can construct the following psd factorization of $M$ of size 3, showing that $\rankpsd M \leq 3$: $M_{ij} = \langle A_i, B_j \rangle$ where $A_i = u_i u_i^T \in \S^3_+, B_j = v_j v_j^T \in \S^3_+$, $i,j=1,\dots,4$ with:
\[
\begin{array}{c}
u_1 = (1,0,0), \;\; u_2 = (0,1,0), \;\; u_3 = (0,0,1), \;\; u_4 = (1,1,1)\\
v_1 = (1,0,0), \;\; v_2 = (1,-1,0), \;\; v_3 = (0,1,-1), \;\; v_4 = (0,0,1).
\end{array}
 \]
 Thus by Theorem \ref{thm:psdlift}, this means that one can represent the polytope $P=[-1,1]^2$ as the linear image of a spectrahedron of size $3$. One can in fact show that $P$ is the projection onto the $(x,y)$ coordinates of the following spectrahedron $T$ of size 3:
\[ T = \left\{ (x,y,z) \in \RR^3 \; : \; \begin{bmatrix} 1 & x & y\\ x & 1 & z\\ y & z & 1 \end{bmatrix} \succeq 0 \right\}. \]
The spectrahedron $T$ (also known as the \emph{elliptope}) is depicted in Figure \ref{fig:square_lift_elliptope}.
Note that no smaller representation of the square $[-1,1]^2$ is possible: it was proved in \cite{gouveia2013polytopes} that the psd rank of any $n$-dimensional polytope is at least $n+1$ which in this case means that $\rankpsd M \geq 2+1=3$.

\begin{figure}[ht]
  \includegraphics[width=3.5cm]{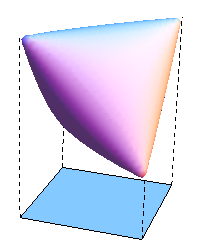}
  \caption{A psd lift of the square $[-1,1]^2$ of size 3: the elliptope $\{ X \in \S^3_+ \; : \; \diag(X) = \onevec \}$ linearly projects onto the square $[-1,1]^2$.}
  \label{fig:square_lift_elliptope}
\end{figure}

\end{example}

\begin{example} We now give another illustration of Theorem \ref{thm:psdlift} where the polytopes $P$ and $Q$ are different. Let $Q = [-1,1]^2$ and let now $P = [-a,a]\times [-b,b]$ be the rectangle centered at the origin with side lengths $2a$ and $2b$ with $0 \leq a,b \leq 1$ (cf. Figure \ref{fig:nested_rectangles_problem}).
The slack matrix of the pair $P,Q$ can be easily computed and is given by:
% (note that when $a=b=1$ we get the matrix $M$ of Equation \eqref{eq:Msquare}):
\begin{equation}
\label{eq:nested_rectangles_matrix}
M =
% I rearranged the matrix so that when a=b=1 we get the same matrix given above for P=Q=[-1,1]^2
\begin{bmatrix}
1+a & 1+b & 1-a & 1-b\\
1-a & 1+b & 1+a & 1-b\\
1-a & 1-b & 1+a & 1+b\\
1+a & 1-b & 1-a & 1+b
\end{bmatrix}.
\end{equation}
%\begin{bmatrix}
%1-a & 1-b & 1+a & 1+b\\
%1+a & 1-b & 1-a & 1+b\\
%1+a & 1+b & 1-a & 1-b\\
%1-a & 1+b & 1+a & 1-b
%\end{bmatrix}
% below is the slack matrix when rows are indexed by facets and columns by vertices
%\begin{bmatrix}
%1-a & 1+a & 1+a & 1-a\\
%1-b & 1-b & 1+b & 1+b\\
%1+a & 1-a & 1-a & 1+a\\
%1+b & 1+b & 1-b & 1-b
%\end{bmatrix}
%=
%J
%+
%\begin{bmatrix}
%-a & a & a & -a\\
%-b & -b & b & b\\
%a & -a & -a & a\\
%b & b & -b & -b
%\end{bmatrix}.

\begin{figure}[ht]
\centering
  \includegraphics[width=7cm]{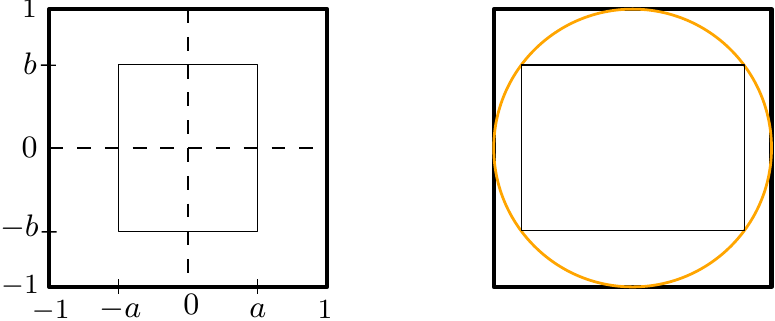}
  \caption{The geometric problem associated to the slack matrix $M$ of Equation \eqref{eq:nested_rectangles_matrix}. The inner polytope is $P=[-a,a]\times [-b,b]$ and the outer polytope is $Q=[-1,1]^2$. The right figure shows an instance where there exists an ellipse $E$ such that $P \subset E \subset Q$.}
  \label{fig:nested_rectangles_problem}
\end{figure}

Theorem \ref{thm:psdlift} says that $\rankpsd M$ is equal to the smallest size of a spectrahedron which has a linear projection that is contained between $P$ and $Q$. In the previous example we saw a spectrahedron of size 3 which projects onto $Q=[-1,1]^2$ and thus this shows that $\rankpsd M \leq 3$ for all $a,b \leq 1$. It is natural to ask whether the psd rank of $M$ can be equal to 2 for some values of $a,b$? One can actually show that the psd rank of $M$ is equal to 2 if, and only if, there is an ellipse $E$ such that $P \subseteq E \subseteq Q$ (cf. e.g., \cite[Section 4]{gouveia2013worst}). It is not hard to see that such an ellipse exists if and only if $a^2 + b^2 \leq 1$. Thus we have the following:
\[ \rankpsd M = \begin{cases} 3 & \text{if } a^2 + b^2 > 1\\ 2 & \text{if } 0< a^2 + b^2 \leq 1\\ 1 & \text{if } a=b=0 \end{cases}. \]

%
%\begin{figure}[ht]
%\centering
%  \includegraphics[width=7cm]{figures/nested_rectangles_psdrk2_region.pdf}
%  \caption{Dashed region is $\rankplus(M) = 3$ region. Filled region is $\rankpsd(M) = 2$ region.}
%  \label{fig:nested_rectangles_psdrk2}
%\end{figure}
\end{example}

%Semidefinite programming. Define spectrahedron, say that we are interested in representing polytopes as lifts of spectrahedra (for optimization).
% Define slack matrix of $P$ wrt $Q$. Say that psd rank gives size of smallest lift sandwiched between $P$ and $Q$ (cite Proposition 3.6 in \cite{gouveia2013worst}). When $P=Q$, the matrix $S_{P,P}$ is called the slack matrix of $P$.
%
%Say that any nonnegative matrix is a slack matrix of two polytopes $P$ and $Q$. Give the construction.
%
%Give two examples
%1) $M = [1 t t; t 1 t; t t 1]$ (two inscribed triangles) and
%2) $M$ the $4\times 4$ matrix of the nested rectangles problem with two parameters $(a,b)$. The one-parameter family (nested squares) was studied in Rekha's paper and they showed that $\rankpsd = 2$ iff $\rankplus = 2$ iff $a \leq 1/\sqrt{2}$. I wonder what is the condition that $\rankpsd = 2$ for the two-parameter family.

\subsection{Information theoretic interpretation}
%\red{Check that I use either Alice and Bob or A and B}
We now describe a different application of the psd rank in the area of quantum information theory.
Let $M$ be a $p\times q$ nonnegative matrix and assume that the entries of $M$ all sum up to 1. Then $M$ can be seen as a joint probability distribution of a pair of random variables $(X,Y)$, where $M_{ij} = P(X=i,Y=j)$. It is known that a nonnegative factorization of $M$ can be interpreted as a representation of $(X,Y)$ as a mixture of independent random variables, see e.g., \cite[Section 6]{cohen1993nonnegative}. As such the \emph{nonnegative rank} of $M$ defines a certain measure of correlation between random variables $X$ and $Y$. In this section we show that a similar interpretation of the psd rank holds, and that $\rankpsd M$ gives a measure of correlation between $X$ and $Y$ in terms of \emph{quantum information theory}.  This quantum interpretation of the psd rank was first pointed out in the paper \cite{jain2013efficient}.

\begin{remark}
We remark that this is not the only known interpretation of the psd rank in quantum information theory: in \cite{fiorini2012lowerbound} the authors show that the psd rank of a matrix $M$ is equal to \emph{the one-way quantum communication complexity of computing the matrix $M$ in expectation}. Also the psd rank is tightly related to the problem of determining the smallest dimension of a Hilbert space that explains certain measured correlations, see e.g., \cite{brunner2008testing,wehner2008lower} for more details. For space reasons, however, we focus only on the interpretation of \cite{jain2013efficient} in terms of correlation of two random variables $X,Y$.
\end{remark}

% To measure the correlation between random variables $X$ and $Y$ we consider the following \emph{correlation generation} problem. We first describe the problem in the classical setting and we will see that it gives rise to the nonnegative rank of $M$. We then describe the quantum version of the game and we see that it  will correspond to the psd rank.

\subsubsection{Correlation generation} Given a pair of random variables $(X,Y)$, consider the following \emph{correlation generation} game: Two parties, Alice and Bob (for short, $A$ and $B$), want to generate samples from the pair of variables $(X,Y)$. Alice outputs samples from $X$ and Bob outputs samples from $Y$ and they want to do it in such a way that the samples follow the \emph{joint} distribution of $(X,Y)$. Note that if $X$ and $Y$ were independent each party could independently sample from the marginals and they would successfully achieve their objective. However if $X$ and $Y$ are correlated then the two parties $A$ and $B$ must either communicate together or share some common information in order to achieve their task. We will show here that the minimum amount of \emph{quantum} information that $A$ and $B$ need to have in common is precisely $\log \rankpsd M$ where $M$ is the matrix giving the joint distribution of $(X,Y)$. Thus this shows that $\log \rankpsd M$ gives a measure of the correlation between $X$ and $Y$ in terms of quantum information theory.

\begin{figure}[ht]
  \centering
  \includegraphics[width=5cm]{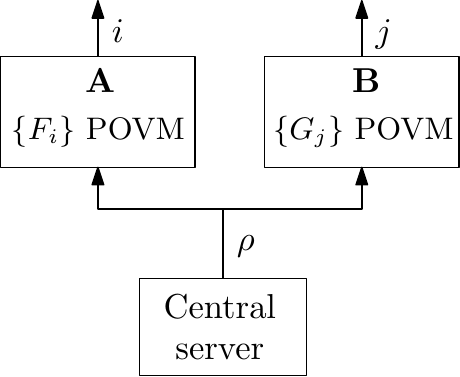}
  \caption{Quantum correlation generation problem: $A$ and $B$ generate samples from the joint distribution $(X,Y)$ using shared information provided by a central server. The psd rank characterizes the minimum amount of quantum information that has to be shared between the two parties.}
  \label{fig:correlation_generation_protocol}
\end{figure}

We first recall some basic terminology from quantum information theory. The state of a finite-dimensional quantum system is represented by a Hermitian positive semidefinite matrix $\rho$ with trace 1, called a \emph{density operator}. For convenience, we will work here with real symmetric matrices (instead of complex Hermitian) since our definition of psd rank involves real symmetric matrices. The state of a \emph{bipartite} system is described by a density operator $\rho$ of dimension $n_1 n_2$ where $n_1$ is the dimension of the first part (or subsystem) and $n_2$ is the dimension of the second part.
%$\rho \in \S^{n_1 n_2}$ is a quantum system composed of two physical subsystems of dimensions respectively $n_1$ and $n_2$.
%To describe a protocol in the correlation generation we will need the notion of a \emph{POVM} (short for Positive Operator-Valued Measure).
Measurements in quantum mechanics are formalized using the concept of \emph{POVM} (short for Positive Operator-Valued Measure). A POVM is a finite collection of psd matrices $F_1,\dots,F_p \in \S^k_+$ that satisfy $\sum_{i=1}^p F_i = I_k$ where $I_k$ is the identity matrix. The outcome of measuring a state $\rho$ using a POVM $\{F_1,\dots,F_p\}$ is $i \in \{1,\dots,p\}$ with probability $\trace(F_i \rho)$. Note that $\trace(F_i \rho) \geq 0$ and $\sum_{i=1}^p \trace(F_i \rho) = \trace(\rho) = 1$.
%A POVM can be equivalently seen as a quantum-to-classical channel which maps quantum states to classical states (i.e., classical probability distributions). Indeed if we let $\Phi$ be the map:
%\[ \Phi:\rho \in \S^n \mapsto (\trace(F_1 \rho),\dots,\trace(F_k \rho)) \in \RR^k, \]
%then we one can verify that $\Phi(\rho)$ is a probability distribution (i.e., it lives in the simplex $\Delta^{k-1}$) whenever $\rho$ is a quantum state.

Let $M$ be a $p\times q$ nonnegative matrix such that $M_{ij} = P(X=i,Y=j)$ where $(X,Y)$ is a pair of random variables. Assume we have a decomposition of $M$ of the form:
\begin{equation}
 \label{eq:quantum-decomp}
 M_{ij} = \trace((F_i \otimes G_j) \rho) \quad \forall i=1,\dots,p, \; j=1,\dots,q
\end{equation}
where $\rho \in \S^{k^2}$ is a bipartite quantum state (where each subsystem has dimension $k$) and where $\{F_i\}$ and $\{G_j\}$ are POVMs, i.e., $F_i,G_j \in \S^k_+$ and $\sum_{i=1}^p F_i = \sum_{j=1}^q G_j = I_k$. The notation $\otimes$ here indicates Kronecker product. If there is such a decomposition of $M$, then one can produce samples from the pair $(X,Y)$ using the help of a central server as follows (cf. Figure \ref{fig:correlation_generation_protocol}): the central server sends the first part of the state $\rho$ to Alice and the second part to Bob (each part has dimension $k$). Alice and Bob perform measurements using POVMs $\{F_i\}$ and $\{G_j\}$ respectively and output the outcomes $i$ and $j$ of their measurements. The laws of quantum mechanics say that the outcome $(i,j)$ occurs with probability $\trace((F_i\otimes G_j) \rho)$. Identity \eqref{eq:quantum-decomp} thus guarantees that the outputs of Alice and Bob follow the joint distribution of $(X,Y)$.
%\begin{figure}[ht]
%  \centering
%  \includegraphics[width=5cm]{figures/quantum_protocol.pdf}
%  \caption{$A$ and $B$ generate samples from the joint distribution $(X,Y)$ using shared quantum state $\rho$.}
%  \label{fig:classical_protocol}
%\end{figure}

The cost of the protocol described above is the number of quantum bits communicated by the central server to Alice and Bob, which in this case is $\log k$ (a quantum system of dimension $n$ is represented using $\log n$ qubits). We are thus interested in the smallest $k$ for which a decomposition of $M$ of the form \eqref{eq:quantum-decomp} exist. It turns out that this is equal to the psd rank of $M$ as we show in the next proposition.

\begin{proposition} \cite{jain2013efficient}
Let $M$ be a $p \times q$ nonnegative matrix where all the entries sum up to one. Let $k \geq 1$. Then the following are equivalent:\\
(i) $M$ admits a psd factorization of size $k$, i.e., there exist $A_i, B_j \in \S^k_+$ such that $M_{ij} = \langle A_i, B_j \rangle$ for all $i=1,\dots,p$ and $j=1,\dots,q$.\\
(ii) There is a quantum protocol for the correlation generation problem using $\log k$ qubits, i.e., $M$ admits a factorization of the form \eqref{eq:quantum-decomp} of size $k$.
\end{proposition}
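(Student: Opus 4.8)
The plan is to prove the two implications separately. For (ii)$\Rightarrow$(i) the key object is the partial trace $\operatorname{Tr}_1\colon\S^{k^2}\to\S^k$ over the first tensor factor, while for (i)$\Rightarrow$(ii) it is the (unnormalized) maximally entangled vector $|\Omega\rangle = \sum_{m=1}^k e_m\otimes e_m\in\RR^k\otimes\RR^k$, for which $\langle\Omega,(A\otimes B)\,\Omega\rangle = \trace(AB)$ for all $A,B\in\S^k$. I will also use the two elementary facts that $\operatorname{Tr}_1$ sends psd matrices to psd matrices and that $\trace(\operatorname{Tr}_1(W)\,Y) = \trace(W\,(I_k\otimes Y))$.

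First I would do the easy direction (ii)$\Rightarrow$(i). Given $M_{ij} = \trace((F_i\otimes G_j)\rho)$, set $B_j := G_j$ and $A_i := \operatorname{Tr}_1((F_i\otimes I_k)\rho)$. Since an operator of the form $X\otimes I_k$ may be moved across a product inside $\operatorname{Tr}_1$, one has $A_i = \operatorname{Tr}_1\big((F_i^{1/2}\otimes I_k)\,\rho\,(F_i^{1/2}\otimes I_k)\big)$, the partial trace of a psd matrix, hence $A_i\in\S^k_+$. Then, using the trace identity above together with $(F_i\otimes I_k)(I_k\otimes G_j) = F_i\otimes G_j$,
\[ \langle A_i,B_j\rangle = \trace\big(\operatorname{Tr}_1((F_i\otimes I_k)\rho)\,G_j\big) = \trace\big((F_i\otimes G_j)\rho\big) = M_{ij}, \]
so $\{A_i\},\{B_j\}$ is a psd factorization of size $k$; note that only $F_i,G_j\succeq 0$ is used, not completeness of the POVMs.

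The substance is in (i)$\Rightarrow$(ii). Given a psd factorization $M_{ij} = \langle A_i,B_j\rangle$ of size $k$, set $P := \sum_i A_i$ and $Q := \sum_j B_j$ in $\S^k_+$. The assumption $\sum_{ij}M_{ij} = 1$ reads, via the entangled-vector identity, $\trace(PQ) = \langle\Omega,(P\otimes Q)\,\Omega\rangle = 1$. Take the pure state $\rho := |v\rangle\langle v|$ with $|v\rangle := (P^{1/2}\otimes Q^{1/2})|\Omega\rangle$: it is psd with $\trace\rho = \langle v|v\rangle = \trace(PQ) = 1$, a bona fide density operator on $\RR^k\otimes\RR^k$. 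Because $0\preceq A_i\preceq P$ forces $\range A_i\subseteq\range P$ (and similarly $\range B_j\subseteq\range Q$), the matrices $F_i := P^{-1/2}A_iP^{-1/2}$ and $G_j := Q^{-1/2}B_jQ^{-1/2}$ (pseudo-inverse square roots) are well defined, psd, and satisfy $P^{1/2}F_iP^{1/2} = A_i$, $Q^{1/2}G_jQ^{1/2} = B_j$, $\sum_i F_i = \Pi_P$, $\sum_j G_j = \Pi_Q$, where $\Pi_P,\Pi_Q$ denote the orthogonal projections onto $\range P$, $\range Q$. Then
\[ \trace\big((F_i\otimes G_j)\rho\big) = \langle\Omega,(P^{1/2}F_iP^{1/2}\otimes Q^{1/2}G_jQ^{1/2})\,\Omega\rangle = \trace(A_iB_j) = M_{ij}. \]

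Two things remain, and the second is where I expect the only genuine subtlety to lie. The collections $\{F_i\}$, $\{G_j\}$ just built sum to the projections $\Pi_P,\Pi_Q$ rather than to $I_k$; but since $\rho$ is supported on $\range P\otimes\range Q$, altering the $F_i$'s and $G_j$'s on the orthogonal complements changes no $\trace((F_i\otimes G_j)\rho)$, so one completes them to POVMs by adding $I_k-\Pi_P$ to $F_1$ and $I_k-\Pi_Q$ to $G_1$ (equivalently, one reduces at the outset to the non-degenerate case $P,Q\succ 0$). The one step that is not forced — and the conceptual crux — is the choice of $\rho$: ``dressing'' the maximally entangled state by $P^{1/2}\otimes Q^{1/2}$ is exactly what turns $\trace(A_iB_j)$ into $\trace((F_i\otimes G_j)\rho)$ while, thanks to $\sum_{ij}M_{ij}=1$, producing a trace-one operator for free. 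The remaining verifications — the entangled-vector identity and the partial-trace manipulations — are routine and, everything being real symmetric, involve no complex conjugations.
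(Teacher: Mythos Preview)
Your proof is correct. The direction (i)$\Rightarrow$(ii) is essentially identical to the paper's: they set $\Sigma_A=\sum_i A_i$, $\Sigma_B=\sum_j B_j$, take $F_i=\Sigma_A^{-1/2}A_i\Sigma_A^{-1/2}$, $G_j=\Sigma_B^{-1/2}B_j\Sigma_B^{-1/2}$, and $\rho=\psi\psi^T$ with $\psi=(\Sigma_A^{1/2}\otimes\Sigma_B^{1/2})\,\mathrm{vec}(I_k)$, which is exactly your dressed maximally entangled state. You are a bit more careful than the paper about the degenerate case, using pseudo-inverse square roots and completing the sub-POVMs on the orthogonal complements; the paper simply assumes $\Sigma_A,\Sigma_B$ are invertible.

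Where you genuinely diverge is in (ii)$\Rightarrow$(i). The paper first reduces to a pure state $\rho=\psi\psi^T$, expands $\psi=\sum_{s=1}^k v_s\otimes w_s$, collects the $v_s,w_s$ into $k\times k$ matrices $V,W$, and sets $A_i=V^TF_iV$, $B_j=W^TG_jW$; the mixed-state case is dismissed as ``very similar.'' Your partial-trace construction $B_j=G_j$, $A_i=\operatorname{Tr}_1\big((F_i\otimes I_k)\rho\big)$ is a different and arguably cleaner route: it treats mixed $\rho$ uniformly, needs no choice of tensor decomposition of $\psi$, and the psd-ness of $A_i$ falls out of a one-line conjugation identity. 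The paper's version, on the other hand, is more symmetric in the two parties and makes transparent that when $\rho$ is pure the factors are congruences of $F_i,G_j$ by the Schmidt data of $\psi$.
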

\begin{proof}
(ii) $\Rightarrow$ (i): Assume we have a decomposition of $M$ of the form
\[ M_{ij} = \trace((F_i\otimes G_j)\rho) \quad \forall i=1,\dots,p, \; j=1,\dots,q \]
where $F_i \in \S^k_+$ and $G_j \in \S^k_+$ are psd matrices such that $\sum_{i=1}^p F_i = \sum_{j=1}^q G_j = I_k$ and $\rho \in \S^{k^2}$ is psd such that $\trace(\rho) = 1$. Assume for simplicity that $\rho$ is rank-one, i.e., $\rho = \psi \psi^T$ where $\psi \in \RR^{k^2}$ (the general case is very similar). Since $\psi \in \RR^{k^2} \cong \RR^{k\times k}$ we know that $\rank \psi \leq k$ and so we can write:
\[ \psi = \sum_{s=1}^k v_s \otimes w_s, \]
where $v_k, w_k \in \RR^k$. Let $V$ and $W$ be the matrices with the $v_s$ and $w_s$ in columns, i.e., $V = [v_1|\dots|v_k]$ and $W = [w_1|\dots|w_k]$. Define $A_i = V^T F_i V$ and $B_j = W^T G_j W$ for $i=1,\dots,p$ and $j=1,\dots,q$. Clearly $A_i$ and $B_j$ are psd and have size $k$. We claim that $A_i,B_j$ give a psd factorization of $M$ of size $k$.
Indeed we have:
\[
\begin{aligned}
\trace((F_i \otimes G_j)\psi \psi^T) &= \psi^T (F_i \otimes G_j) \psi\\
&= \sum_{1\leq s,t \leq k} (v_s \otimes w_s)^T (F_i \otimes G_j) (v_t \otimes w_t)\\
&\overset{(*)}{=} \sum_{1\leq s,t \leq k} (v_s^T F_i v_t) (w_s^T G_j w_t)\\
&= \langle V^T F_i V, W^T G_j W \rangle = \langle A_i, B_j \rangle.
\end{aligned}
\]
where in $(*)$ we used the mixed-product property of the Kronecker product $(A\otimes B) (C\otimes D) = (AC)\otimes (BD)$.

(i) $\Rightarrow$ (ii): We now prove the other direction. Assume we have a psd factorization of $M$ of the form $M_{ij} = \langle A_i, B_j \rangle$ where $A_i,B_j \in \S^k_+$. We show how to construct a factorization of the form \eqref{eq:quantum-decomp}. Let $\Sigma_A,\Sigma_B$ be defined respectively by $\Sigma_A = \sum_{i=1}^p A_i$ and $\Sigma_B = \sum_{j=1}^q B_j$. Note that $\Sigma_A$ and $\Sigma_B$ can be assumed to be invertible (otherwise we can reduce the size of the psd factorization). Consider the matrices $F_i$ and $G_j$ defined by:
\[ F_i = \Sigma_A^{-1/2} A_i \Sigma_A^{-1/2} \;\; (i=1,\dots,p) \quad \text{ and } \quad G_j = \Sigma_B^{-1/2} B_j \Sigma_B^{-1/2} \;\; (j=1,\dots,q). \]
Then $F_i,G_j \succeq 0$ and $\sum_{i=1}^p F_i = \sum_{j=1}^q G_j = I_k$.
We now construct the state $\rho \in \S^{k^2}$ of the protocol. To do so recall that we have the following simple fact: If $A$ and $B$ are symmetric matrices of size $k$, then \[ \trace(AB) = e^T (A\otimes B) e \]
where $e = \text{vec}(I_k) \in \RR^{k^2}$ is the vector obtained by stacking all the columns of $I_k$ into a single column of dimension $k^2$. Let $\rho \in \S^{k^2}$ be defined by $\rho = \psi \psi^T$
where
\[ \psi = (\Sigma_{A}^{1/2} \otimes \Sigma_{B}^{1/2}) e. \]
First note that $\rho$ is a valid state and $\trace(\rho) = 1$ since
\[ \trace(\rho) = \psi^T \psi = e^T (\Sigma_A \otimes \Sigma_B) e = \trace(\Sigma_A \Sigma_B) = \sum_{\substack{1\leq i\leq p\\ 1\leq j\leq q}} \trace(A_i B_j) = \sum_{\substack{1\leq i\leq p\\ 1\leq j\leq q}} M_{ij} = 1. \]
We now claim that the choice of $\{F_i\},\{G_j\}$ and $\rho$ gives a valid decomposition of $M$ as in \eqref{eq:quantum-decomp}. Indeed we have:
\[
\begin{aligned}
\trace((F_i \otimes G_j) \rho) &= \psi^T(F_i \otimes G_j)\psi\\
&= e^T (\Sigma_{A}^{1/2} \otimes \Sigma_{B}^{1/2}) (\Sigma_A^{-1/2} A_i \Sigma_A^{-1/2} \otimes \Sigma_B^{-1/2} B_j \Sigma_B^{-1/2}) (\Sigma_{A}^{1/2} \otimes \Sigma_{B}^{1/2}) e\\
&= e^T (A_i \otimes B_j) e = \trace(A_i B_j) = M_{ij}.
\end{aligned}
 \]
\end{proof}

%\begin{remark}
%From the proof of the theorem, we see that the state $\rho$ in the decomposition \eqref{eq:quantum-decomp} can always be taken to be rank-one.
%\end{remark}

%%%%%%%%%%%%%%%%%%%%%%%%%%%%%%%%%%%%%%%%%%%%%%%%%%%%%%%%%%%%%%%%%%%%%%%%%%
\section{Psd rank two and convex programming}
\label{sec:psdrank2_convexprogramming}
In Proposition \ref{prop:rank1rank2} we showed that if $M$ is a nonnegative matrix with $\rank(M) \leq 2$ then $\rankpsd(M) = \rank(M)$. When $\rank(M) = 3$, then inequalities \ref{prop:ineqsranks} imply that $\rankpsd(M) \geq 2$. In this section we show that one can decide whether $\rankpsd(M) = 2$ using semidefinite programming.

We saw in section \ref{sec:motivation} that any nonnegative matrix $M$ can always be interpreted as the slack matrix of a pair of polyhedra $P,Q$ where $P\subset Q$ and where $P$ is bounded. In fact one can always choose the outer polyhedron $Q$ to be bounded as well, as is explained for example in \cite[Theorem 1]{gillis2012geometric}:
\begin{lemma}
Let $M \in \RR^{p\times q}_+$ be a nonnegative matrix and assume that $M\onevec = \onevec$. Let $r=\rank M$. Then there exist polytopes $P,Q$ in $\RR^{r-1}$ (where $P$ and $Q$ are bounded) such that $P\subset Q$ and such that $M$ is the slack matrix of the pair $P,Q$.
\end{lemma}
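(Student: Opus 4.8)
The plan is to realize $M$ directly as the slack matrix of the ``obvious'' pair of polytopes attached to its rows, and to use the hypothesis $M\onevec=\onevec$ precisely to force the outer polytope to be bounded. Throughout, write $m_1,\dots,m_p\in\RR^q$ for the rows of $M$ and $e_1,\dots,e_q$ for the coordinate functionals on $\RR^q$.

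The first step records the consequence of $M\onevec=\onevec$: each $m_i$ is nonnegative with $\onevec^T m_i=1$, so $m_i$ lies in the (compact!) standard simplex $\Delta:=\{y\in\RR^q_+:\onevec^T y=1\}$, hence on the hyperplane $H:=\{y\in\RR^q:\onevec^T y=1\}$. Let $A:=\textup{aff}\{m_1,\dots,m_p\}$; I claim $\dim A=r-1$. Since $r$ linearly independent rows of $M$ are in particular affinely independent, $\dim A\ge r-1$. Conversely the direction space of $A$ is contained in $V\cap H_0$, where $V$ is the $r$-dimensional span of $m_1,\dots,m_p$ and $H_0:=\{z\in\RR^q:\onevec^T z=0\}$; since $V\not\subseteq H_0$ (it contains $m_1$, and $\onevec^T m_1=1\neq 0$), we get $\dim A\le\dim(V\cap H_0)=r-1$.

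The second step builds $P$ and $Q$. Fix an affine isomorphism $\phi:A\to\RR^{r-1}$ and set $x_i:=\phi(m_i)\in\RR^{r-1}$ and $P:=\conv(x_1,\dots,x_p)=\phi(\conv(m_1,\dots,m_p))$. For each $j$ the restriction $e_j|_A$ is affine, so $\ell_j:=(e_j|_A)\circ\phi^{-1}$ is an affine functional on $\RR^{r-1}$; write $\ell_j(x)=b_j-a_j^T x$ with $a_j\in\RR^{r-1}$, $b_j\in\RR$, and put $Q:=\{x\in\RR^{r-1}:a_j^T x\le b_j,\ j=1,\dots,q\}=\{x\in\RR^{r-1}:\ell_j(x)\ge 0\ \forall j\}$. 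By construction $b_j-a_j^T x_i=\ell_j(x_i)=(m_i)_j=M_{ij}$, so (with the generating set $\{x_i\}$ for $P$ and the describing system $\{\ell_j\ge 0\}$ for $Q$, both possibly redundant, which Definition~\ref{def:slack} permits) the slack matrix of the pair $(P,Q)$ is exactly $M$; and $\ell_j(x_i)=M_{ij}\ge 0$ shows $x_i\in Q$, hence $P\subseteq Q$. It remains only to see that $P$ and $Q$ are genuine polytopes: $P$ is the convex hull of finitely many points, and $Q$ is a finite intersection of halfspaces, so the issue is whether $Q$ is bounded.

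The boundedness of $Q$ is the one delicate point, and it is exactly where $M\onevec=\onevec$ is used. Pulling back by $\phi$, $\phi^{-1}(Q)=\{y\in A:y_j\ge 0\ \forall j\}=A\cap\RR^q_+\subseteq H\cap\RR^q_+=\Delta$, which is compact; hence $Q=\phi(\phi^{-1}(Q))$ is the image of a bounded set under an affine map and is therefore bounded. So $Q$ is a polytope in $\RR^{r-1}$, and it is full-dimensional since it contains the $(r-1)$-dimensional polytope $P$. This completes the proof. (Alternatively one can run the argument with matrices: take a full-rank factorization $M=FG$ with $F\in\RR^{p\times r}$, $G\in\RR^{r\times q}$, use $\onevec_p=F(G\onevec_q)$ to change basis so that the last column of $F$ becomes $\onevec_p$, read off $x_i$ from the first $r-1$ columns of $F$ and $(a_j,b_j)$ from the columns of $G$; the identity $\onevec_q^T G^T z=z_r$ for the transformed $G$ then forces the recession cone of $Q$ to be $\{0\}$.)
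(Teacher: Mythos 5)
Your proof is correct, and although it proves the same thing in the same spirit (read $M$ off as a generalized slack matrix in a hyperplane of dimension $r-1$, using $M\onevec=\onevec$ to force compactness), the route is meaningfully different and a bit slicker than the paper's. The paper invokes a rank factorization $M=AB$ with the normalization $A\onevec=\onevec$, $B\onevec=\onevec$ (citing a lemma of Gillis--Glineur), defines $P$ from the rows of $A$ and $Q$ from the rows of $B$ in explicit coordinates, and proves boundedness of $Q$ by a recession-cone argument: any unbounded direction $z$ produces $w\ge 0$ with $\onevec^T w=0$, hence $w=0$, and full rank of $B$ then forces $z=0$. You instead work coordinate-freely with the rows $m_i$ of $M$ themselves, observe that the affine hull $A$ of the $m_i$ sits inside the hyperplane $H=\{\onevec^T y=1\}$, and note that the preimage $\phi^{-1}(Q)=A\cap\RR^q_+$ is then a closed subset of the \emph{compact} standard simplex $\Delta=H\cap\RR^q_+$, so $Q$ is bounded by transport under the affine isomorphism $\phi$. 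This makes the role of the hypothesis $M\onevec=\onevec$ transparent (it is exactly what places the rows in $\Delta$), avoids invoking the normalized factorization lemma as a black box, and replaces the recession-cone computation with the single geometric observation that $\Delta$ is compact. Your aside, which supplies the explicit matrix version, is essentially the paper's argument.

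One small remark: the dimension count $\dim A=r-1$ is needed so that $\phi$ exists with the right target, and your argument for it is fine, but it is worth noting that the lower bound $\dim A\ge r-1$ also uses $A\subset H$ implicitly: $r$ linearly independent rows lying in the affine hyperplane $H$ (which does not pass through the origin) are automatically affinely independent, which is the fact you invoke.
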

\begin{proof}
The proof is in \cite{gillis2012geometric} and we reproduce it here for completeness.
In \cite{gillis2012geometric} it is shown that one can always find a factorization of $M$ of the form $M=AB$ where $A \in \RR^{p\times r}, B \in \RR^{r\times q}$ and $A\onevec=\onevec$ and $B\onevec = \onevec$. Write $A$ and $B$ as:
\[ A = \begin{bmatrix} a_1^T & t_1 \\ \vdots \\ a_p^T & t_p \end{bmatrix}
\quad
B = \begin{bmatrix} b_1^T \\ \vdots \\ b_{r}^T \end{bmatrix}, \]
where $a_i \in \RR^{r-1}$, $t_i\in \RR$ for $i=1,\dots,p$ and $b_j \in \RR^q$ for all $j=1,\dots,r$. Note that since $A\onevec = \onevec$ we have $t_i = 1 - \onevec^T a_i$.
Define the polytopes $P$ and $Q$ as follows:
\[ P = \conv(a_1,\dots,a_p) \subset \RR^{r-1} \]
and
\[ Q = \left\{ x \in \RR^{r-1} \; : \; \sum_{i=1}^{r-1} x_i b_i + \left(1-\sum_{i=1}^{r-1} x_i\right) b_r \geq 0 \right\}. \]
Note that $Q$ is defined using $q$ linear inequalities. It is not difficult to verify that $M$ is the slack matrix of the pair $P,Q$. It remains to show that $Q$ is bounded. Assume for contradiction that $x_0 + \alpha z \in Q$ for all $\alpha \geq 0$ where $x_0 \in Q$. Then one can show that this implies that
 \[ w := \sum_{i=1}^{r-1} z_i b_i - \left(\sum_{i=1}^{r-1} z_i\right) b_r \geq 0. \]
Note that we have $\onevec^T w = 0$ since $\onevec^T b_i = 1$ for all $i=1,\dots,r$. Thus since $w \geq 0$ and $\onevec ^T w = 0$, this means that $w=0$, i.e., $\sum_{i=1}^{r-1} z_i b_i - \left(\sum_{i=1}^{r-1} z_i\right) b_r=0$. Since $B$ is full-rank this necessarily means that $z=0$. We have thus shown that $Q$ is bounded.
\end{proof}

Assume that $\rank M = 3$ and let $P\subset Q \subset \RR^2$ be two polytopes in the plane such that $M$ is the slack matrix of $P$ with respect to $Q$. From \cite[Proposition 4.1]{gouveia2013worst}, we know that $\rankpsd M = 2$ if, and only if, there exists an ellipse $E$ such that $P \subset E \subset Q$. Since we have a vertex description of $P$, and a facet description of $Q$, this can be decided using semidefinite programming: Indeed, let $x_1,\dots,x_v$ be the vertices of $P$, and let $Q=\{x \in \RR^2 \; : \; Gx \leq h\}$ be a facet description of $Q$ where $G$ has $f$ rows.
%Consider an ellipse $E \subset \RR^2$ be an ellipse described by:
%\[ E = \{ x \in \RR^2 \; : \; x^T A x + 2b^T x + c \leq 0 \} \]
%where $A$ is positive semidefinite. The condition $P \subseteq E$ is equivalent to saying that
%\[ \begin{bmatrix} x_j \\ 1 \end{bmatrix}^T \begin{bmatrix} A & b\\ b^T & c \end{bmatrix} \begin{bmatrix} x_j\\ 1 \end{bmatrix} \leq 0 \quad \forall j=1,\dots,v. \]
%To
%To
%
One can show that there exists an ellipse sandwiched between $P$ and $Q$ if, and only if, there exist $A \in \S^2, b \in \RR^2$ and $c \in \RR$ such that:
\[
\begin{aligned}
1. \quad & A \succeq 0, \; \trace(A) = 1;\\
2. \quad& \begin{bmatrix} x_j \\ 1 \end{bmatrix}^T \begin{bmatrix} A & b\\ b^T & c \end{bmatrix} \begin{bmatrix} x_j\\ 1 \end{bmatrix} \leq 0 \quad \forall j=1,\dots,v;\\
3. \quad& \exists \lambda_i \geq 0 \; : \;
\begin{bmatrix} A & b\\ b^T & c \end{bmatrix}
 \succeq \lambda_i \begin{bmatrix} 0 & g_i^T/2\\ g_i/2 & -h_i \end{bmatrix} \quad \forall i=1,\dots,f.
\end{aligned}
\]
The ellipse $E$ that satisfies $P\subset E \subset Q$ is then defined by:
\[ E = \left\{ x \in \RR^2 \; : \; \begin{bmatrix} x \\ 1 \end{bmatrix}^T \begin{bmatrix} A & b\\ b^T & c \end{bmatrix} \begin{bmatrix} x\\ 1 \end{bmatrix} \leq 0\right\}. \]
Note that the constraint (2) above corresponds to the condition $P\subseteq E$ and the constraint (3) corresponds to $E \subseteq Q$. The latter uses the following result commonly known as the \emph{S-lemma} \cite[Appendix B]{boyd2004convex}:
\begin{lemma}\label{lem:s-lemma}
Let $A_i \in \S^n,b_i \in \RR^n,c_i \in \RR$ for $i=1,2$ and assume that the following implication holds for all $x \in \RR^n$:
\[ x^T A_1 x + 2b_1^T x + c_1 \leq 0 \; \Longrightarrow \; x^T A_2 x + 2b_2^T x + c_2 \leq 0. \]
Then there exists a $\lambda \geq 0$ such that:
\[
\begin{bmatrix} A_2 & b_2\\ b_2^T & c_2 \end{bmatrix}
\preceq
\lambda
\begin{bmatrix} A_1 & b_1\\ b_1^T & c_1 \end{bmatrix}.
\]
\end{lemma}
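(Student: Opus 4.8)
The plan is to prove the S-lemma via the classical ``hidden convexity'' route: homogenize the two inhomogeneous quadratics, rephrase the desired conclusion as a statement about a pair of \emph{homogeneous} quadratic forms, and then separate a suitable convex cone in $\RR^2$.

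First I would homogenize. Write $\widehat M_i = \begin{bmatrix} A_i & b_i\\ b_i^T & c_i\end{bmatrix} \in \S^{n+1}$, so that for $y=(x,1)$ one has $y^T\widehat M_i y = x^T A_i x + 2b_i^T x + c_i$. Since $y\mapsto y^T\widehat M_i y$ is homogeneous of degree $2$, the hypothesis is equivalent to: for every $y\in\RR^{n+1}$ with nonzero last coordinate, $y^T\widehat M_1 y \le 0 \Rightarrow y^T\widehat M_2 y \le 0$. Under the mild regularity assumption that the first inequality is strictly feasible — which may be assumed in the setting above, where $\{x : x^TA_1x+2b_1^Tx+c_1\le 0\}$ is a (full-dimensional) ellipse — a short perturbation argument (add a small multiple of a strict Slater point $\bar y$ and let the perturbation tend to $0$) removes the restriction on the last coordinate, so that $y^T\widehat M_1 y\le 0 \Rightarrow y^T\widehat M_2 y\le 0$ holds for \emph{all} $y\in\RR^{n+1}$. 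The conclusion $\widehat M_2\preceq\lambda\widehat M_1$ we are after is precisely the statement that $\lambda\, y^T\widehat M_1 y - y^T\widehat M_2 y\ge 0$ for all $y$.

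The core step is to invoke \emph{Dines' theorem}: for symmetric $N_1,N_2\in\S^m$ the joint range $\mathcal D = \{(y^TN_1y,\ y^TN_2y) : y\in\RR^m\}\subseteq\RR^2$ is a convex cone; its proof reduces, by restricting to the two-dimensional span of any two test vectors, to the case $m=2$, where a double-angle substitution exhibits $\mathcal D$ as the cone generated by an ellipse or a segment, which is always convex. Applying this with $N_i=\widehat M_i$, the (extended) hypothesis says exactly that the convex cone $\mathcal D$ is disjoint from the open convex set $\{(u,v) : u<0,\ v>0\}$. A separating line produces $(\alpha,\beta)\ne 0$ with $\alpha u+\beta v\ge 0$ on $\mathcal D$ and $\alpha u+\beta v\le 0$ on that open quadrant; probing the latter forces $\alpha\ge 0$ and $\beta\le 0$, and $\beta=0$ is impossible since then $\alpha>0$ and $\alpha\widehat M_1\succeq 0$, contradicting the existence of $\bar y$ with $\bar y^T\widehat M_1\bar y<0$. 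Rescaling so that $\beta=-1$ and setting $\lambda=\alpha\ge 0$, the inequality $\alpha\widehat M_1+\beta\widehat M_2\succeq 0$ becomes $\lambda\widehat M_1-\widehat M_2\succeq 0$, i.e. $\widehat M_2\preceq\lambda\widehat M_1$, which is the claim.

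I expect the main obstacle to be Dines' convexity theorem itself: the homogenization is routine bookkeeping and the separation step is standard, but the convexity of the joint numerical range is the one genuinely non-obvious ingredient. One can of course simply quote it (and the whole S-lemma) from the literature, e.g. \cite[Appendix B]{boyd2004convex}, as the paper does.
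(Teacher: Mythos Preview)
The paper does not actually prove this lemma; it simply states it and cites \cite[Appendix B]{boyd2004convex}. Your proposal goes well beyond what the paper does: you give the standard (and correct) proof via homogenization, Dines' convexity theorem for the joint range of two quadratic forms, and a separation argument. You also correctly flag that the lemma as stated omits the usual Slater hypothesis (strict feasibility of $x^TA_1x+2b_1^Tx+c_1<0$), without which the conclusion can fail, and you rightly observe that in the paper's intended application the first quadratic defines a full-dimensional ellipse so the Slater condition is automatic. There is nothing to compare against on the paper's side; your sketch is sound and your caveats are on point.
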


%%%%%%%%%%%%%%%%%%%%%%%%%%%%%%%%%%%%%%%%%%%%%%%%%%%%%%%%%%%%%%%%%%%%%%%%%%
\section{Relationships between ranks}
\label{sec:relationships_between_ranks}
Recall from Proposition~\ref{prop:ineqsranks} that for a nonnegative matrix $M \in \RR^{p \times q}_+$,
\begin{align}
 \frac{1}{2}\sqrt{1+8 \,\rank(M)}-\frac{1}{2} \leq \rankpsd (M) \leq \rankplus (M). \label{eq:rank inequalities}
\end{align}
The first inequality is equivalent to saying that for all nonnegative matrices $M$,
\begin{align}
\rank (M) \leq {\rankpsd (M) + 1 \choose 2 }. \label{eq:easy rank psd rank}
\end{align}
This says that while rank may be
larger than psd rank, it cannot be much larger, since it is bounded
above by a quadratic function of the psd rank.
%Several natural questions follow this observation.  Does there exist a function of the
% rank that upper bounds the psd rank?  What can we say about analogous
%relationships between psd rank and nonnegative rank?
In this section, we examine the relationships between the three ranks present in
inequality~\eqref{eq:rank inequalities} and a fourth type of rank called
{\em square root rank}.  We begin by showing that all inequalities
in \eqref{eq:rank inequalities} can be tight. An easy example for the second inequality is the
$n \times n$ identity matrix for which $\rankpsd (I_n) = \rankplus (I_n) =n$.

\begin{example} [{\bf Derangement matrices}] \label{ex:derangement}
% However, the first inequality is not tight in this case since $n > \frac{1}{2}\sqrt{1+8 \,n}-\frac{1}{2}$ for all positive integers $n$.

The $n \times n$ {\em derangement matrix} $D_n$ is the matrix with zeros on the diagonal and ones elsewhere. It verifies $\rank (D_n) = n$ for all $n$. Fix a positive integer $k$ and let $n := {k+1 \choose 2}$. We will exhibit a factorization of $D_n$ through $\PSD^k$ which will show that $\rankpsd (D_n) \leq k$, making the first inequality tight.

%Then $\rank (D_n) = {k+1 \choose 2} \leq {\rankpsd (D_n) + 1 \choose 2} \leq {k+1 \choose 2}$ which implies that $\rankpsd (D_n) =k$ making the first inequality in Proposition~\ref{prop:ineqsranks} tight.

To construct a psd factorization of $D_n$ through $\PSD^k$ choose factors as follows:
For $i = 1,\ldots,k$, let $A_i = e_i e_i^T$ where $e_i$ is the $i$th standard basis vector in $\RR^k$.  Since ${k+1 \choose 2} = {k \choose 2} +k$, we need to define $\binom{k}{2}$ further
$A_i$ matrices.  Let
$F= \left[ \begin{array}{cc}
1 & -1 \\
-1 & 1 \end{array} \right] $.  For each $i,j \in \left\{1,\ldots,k\right\}$ with $i < j$, define $F_{i,j}$ to be equal to the $k \times k$ matrix that has its $i,j$ principal submatrix equal to $F$ and all other entries equal to $0$.  Let $A_{k+1} = F_{1,2}$, $A_{k+2} = F_{1,3}$, \ldots, $A_{2k} = F_{2,3}$,$A_{2k+1} = F_{2,4}$, and so on.
Now we define matrices $B_1,\ldots,B_n$ for the columns.  First, let $E$ be the $(k-1) \times (k-1)$ matrix with ones on the diagonal and $\frac{1}{2}$ everywhere else.  For $i = 1,\ldots,k$, let $B_i$ be the matrix whose $i$th row and column are identically zero and whose remaining entries form the matrix $E$.  For $i > k$, we obtain the matrix $B_i$ from $A_i$ by the following:  First change all nonzero entries and all diagonal entries of $A_i$ to ones.  Then change all remaining zero entries to $\frac{1}{2}$.  Call the resulting matrix $B_i$.
The matrices $A_i,B_j$ form a psd factorization of $D_n$.

We present the case $k=3$ below:
\[ D_6 = \left[ \begin{array}{cccccc}
0 & 1 & 1 & 1 & 1 & 1 \\
1 & 0 & 1 & 1 & 1 & 1 \\
1 & 1 & 0 & 1 & 1 & 1 \\
1 & 1 & 1 & 0 & 1 & 1 \\
1 & 1 & 1 & 1 & 0 & 1 \\
1 & 1 & 1 & 1 & 1 & 0 \end{array} \right] \]

Then $A_1,\ldots,A_6$ are:
{\tiny {
\[ \left[ \begin{array}{ccc}
1 & 0 & 0 \\
0 & 0 & 0 \\
0 & 0 & 0  \end{array} \right],
\left[ \begin{array}{ccc}
0 & 0 & 0 \\
0 & 1 & 0 \\
0 & 0 & 0  \end{array} \right],
\left[ \begin{array}{ccc}
0 & 0 & 0 \\
0 & 0 & 0 \\
0 & 0 & 1  \end{array} \right]
\left[ \begin{array}{ccc}
1 & -1 & 0 \\
-1 & 1 & 0 \\
0 & 0 & 0  \end{array} \right],
\left[ \begin{array}{ccc}
1 & 0 & -1 \\
0 & 0 & 0 \\
-1 & 0 & 1  \end{array} \right],
\left[ \begin{array}{ccc}
0 & 0 & 0 \\
0 & 1 & -1 \\
0 & -1 & 1  \end{array} \right], \]
}}

and $B_1,\ldots,B_6$ are:
{\tiny{
\[ \left[ \begin{array}{ccc}
0 & 0 & 0 \\
0 & 1 & \frac{1}{2} \\
0 & \frac{1}{2} & 1  \end{array} \right],
\left[ \begin{array}{ccc}
1 & 0 & \frac{1}{2} \\
0 & 0 & 0 \\
\frac{1}{2} & 0 & 1  \end{array} \right],
\left[ \begin{array}{ccc}
1 & \frac{1}{2} & 0 \\
\frac{1}{2} & 1 & 0 \\
0 & 0 & 0  \end{array} \right],
 \left[ \begin{array}{ccc}
1 & 1 & \frac{1}{2} \\
1 & 1 & \frac{1}{2} \\
\frac{1}{2} & \frac{1}{2} & 1  \end{array} \right],
\left[ \begin{array}{ccc}
1 & \frac{1}{2} & 1 \\
\frac{1}{2} & 1 & \frac{1}{2} \\
1 & \frac{1}{2} & 1  \end{array} \right],
\left[ \begin{array}{ccc}
1 & \frac{1}{2} & \frac{1}{2} \\
\frac{1}{2} & 1 & 1 \\
\frac{1}{2} & 1 & 1  \end{array} \right]. \]
}}

We showed that $\rankpsd (D_n) = k$ whenever $n = \binom{k+1}{2}$.  Now suppose that $n$ is strictly between $\binom{k}{2}$ and $\binom{k+1}{2}$.  Then the rank lower bound (first inequality in ~\eqref{eq:rank inequalities}) implies that $\rankpsd (D_n) > k-1$.  Since $D_n$ is a submatrix of $D_{\binom{k+1}{2}}$ for which we know a size $k$ psd factorization, $\rankpsd (D_n) \leq k$.  Thus, $\rankpsd (D_n) = k$ for these intermediate values of $n$, or equivalently, $$\rankpsd (D_n) = \textup{min} \left\{ k \,:\, n \leq {k+1 \choose 2} \right\} \textup{ for all } n.$$
\end{example}

%One can view the fact that $n=\rank (I_n) \leq \rankpsd (I_n)$ as a particular case of this result since $I_n$ is a slack matrix of the standard $(n-1)$-dimensional simplex in $\RR^n$, defined as the convex hull of the origin and the standard basis vectors $e_i \in \RR^n$.

\subsection{Square root rank: an upper bound for psd rank}
%We now introduce a new rank for a nonnegative matrix that gives a simple upper bound on psd rank, and is particularly interesting for $0/1$ matrices.
Given a nonnegative matrix $M$, let $\sqrt M$ denote a Hadamard square root of $M$ obtained by replacing each entry in $M$ by one of its two possible square roots.

\begin{definition} \label{def:sqrt rank} The {\bf square root rank} of a nonnegative matrix $M$, denoted as $\sqrtrank(M)$, is the minimum rank of a Hadamard square root of $M$.
\end{definition}

For a quick example of square root rank, note that the following matrix $M$ (of rank 3) has $\sqrtrank(M) = 2$ as evidenced by the shown square root.
\[ M = \left( \begin{array}{ccc} 1 & 0 & 1 \\ 0 & 1 & 4 \\ 1 & 1 & 1 \end{array} \right)
\longrightarrow \sqrt{M} = \left( \begin{array}{ccc} 1 & 0 & 1 \\ 0 & 1 & -2 \\ 1 & 1 & -1 \end{array} \right) \]

Recall from the proof of Theorem~\ref{thm:firstprops} (v)  that if a Hadamard square root of $M$ has rank $r$ then there is a psd factorization of $M$ by matrices of rank one lying in the psd cone $\PSD^r$. This implies the following corollary.

\begin{corollary}
\label{cor:upper bound from sqrt rank}
For a nonnegative matrix $M \in \RR^{p \times q}$, $\rankpsd (M) \leq \sqrtrank (M)$.
In particular, if $M$ is a $0/1$ matrix, then $\rankpsd (M) \leq \rank (M)$.
\end{corollary}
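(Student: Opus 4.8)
The plan is to leverage directly the construction already used in the proof of Theorem~\ref{thm:firstprops}(v). First I would set $r = \sqrtrank(M)$ and pick a Hadamard square root $N = \sqrt{M}$ realizing this rank, so that $\rank(N) = r$ and $N \hadprod N = M$ entrywise. Since $N$ has rank $r$, I can take a rank factorization $N_{ij} = \langle a_i, b_j \rangle$ with vectors $a_1,\dots,a_p \in \RR^r$ and $b_1,\dots,b_q \in \RR^r$.

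Next I would define $A_i = a_i a_i^T \in \PSD^r$ and $B_j = b_j b_j^T \in \PSD^r$, exactly as in the proof of Theorem~\ref{thm:firstprops}(v). These are psd matrices of size $r$, and
\[
\langle A_i, B_j \rangle = \langle a_i a_i^T, b_j b_j^T \rangle = (a_i^T b_j)^2 = N_{ij}^2 = (N \hadprod N)_{ij} = M_{ij},
\]
so the $A_i, B_j$ form a psd factorization of $M$ of size $r$. This yields $\rankpsd(M) \leq r = \sqrtrank(M)$, which is the first claim.

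For the second claim, I would observe that when $M$ is a $0/1$ matrix it is its own Hadamard square root (each entry satisfies $x = x^2$ for $x \in \{0,1\}$), and hence $\sqrtrank(M) \leq \rank(M)$; combining with the first claim gives $\rankpsd(M) \leq \rank(M)$. I do not anticipate a genuine obstacle here: the only point requiring a little care is that the factorization $N_{ij} = \langle a_i, b_j\rangle$ can be taken with the intermediate dimension equal to $\rank(N)$ rather than $\min(p,q)$, which is just the standard fact that a matrix of rank $r$ admits a factorization through $\RR^r$.
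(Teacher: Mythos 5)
Your proposal is correct and follows exactly the paper's argument: the paper derives the corollary by recalling the rank-one construction from the proof of Theorem~\ref{thm:firstprops}(v) applied to a minimal-rank Hadamard square root, and notes that a $0/1$ matrix is its own Hadamard square root. No gaps or meaningful differences.
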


%The notion of square root rank of a nonnegative matrix was introduced in \cite{gouveia2013polytopes}. In the next section we will see that $\sqrtrank(M)$ is the smallest size of a psd cone that admits a psd factorization of $M$ in which all factors have rank one. The following theorem explains the relevance or square root rank for psd lifts.

The second statement of Corollary~\ref{cor:upper bound from sqrt rank} says that if a matrix has only the two distinct entries $0$ and $1$, then its psd rank is bounded above by rank.
This was extended by Barvinok \cite{barvinok2012approximations} to an upper bound on the psd rank of a matrix in terms of its rank and number of distinct entries.
%This result depends on the following lemma.

\begin{lemma}
\cite[Lemma~4.4]{barvinok2012approximations}
\label{lem:Barvinok}
\item Let $A = (a_{ij})$ be a real matrix and $f \,:\, \RR \rightarrow \RR$ be a polynomial of degree $k$. If $B  = (b_{ij})$ is such that $b_{ij} = f(a_{ij})$ for all $i,j$, then $$\rank (B) \leq \binom{k+\rank (A)}{k}.$$
\end{lemma}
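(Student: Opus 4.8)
The plan is to reduce the statement to the behavior of rank under the Veronese (monomial) map, exploiting that a degree-$k$ polynomial applied entrywise is a linear combination of entrywise Hadamard powers $A^{\hadprod m}$ for $m = 0,1,\dots,k$. Write $f(t) = \sum_{m=0}^k c_m t^m$, so that $B = \sum_{m=0}^k c_m A^{\hadprod m}$, where $A^{\hadprod 0}$ is the all-ones matrix $J$ and $A^{\hadprod m}$ denotes the entrywise $m$-th power. Since rank is subadditive, it suffices to control $\rank(A^{\hadprod m})$ and then sum the bounds — but a cleaner route that gives exactly $\binom{k+\rank(A)}{k}$ is to realize all the $A^{\hadprod m}$ simultaneously inside one factorization.

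\textbf{Main steps.} First I would fix a rank factorization $A = UV$ with $U \in \RR^{p\times r}$, $V \in \RR^{r\times q}$, $r = \rank(A)$, so that $a_{ij} = \langle u_i, v_j\rangle$ where $u_i$ is the $i$-th row of $U$ and $v_j$ the $j$-th column of $V$. Then $a_{ij}^m = \langle u_i, v_j\rangle^m$, and the key identity is that $\langle x, y\rangle^m = \langle x^{\otimes m}, y^{\otimes m}\rangle$, where $x^{\otimes m}$ lives in the symmetric power $\mathrm{Sym}^m(\RR^r)$, a space of dimension $\binom{r+m-1}{m}$. Passing through the full truncated symmetric algebra, define the map $\phi: \RR^r \to W := \bigoplus_{m=0}^k \mathrm{Sym}^m(\RR^r)$ by $\phi(x) = (1, x, x^{\otimes 2}, \dots, x^{\otimes k})$ with appropriately chosen scalar weights on each graded piece so that $\langle \phi(x), \phi(y)\rangle_{W} = \sum_{m=0}^k c_m \langle x,y\rangle^m = f(\langle x,y\rangle)$. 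Then $b_{ij} = f(a_{ij}) = \langle \phi(u_i), \phi(v_j)\rangle_W$, which exhibits $B$ as a product of a $p \times \dim W$ matrix with a $\dim W \times q$ matrix, so $\rank(B) \leq \dim W = \sum_{m=0}^k \binom{r+m-1}{m} = \binom{r+k}{k}$, the last equality being the hockey-stick identity. Finally I would note the weights can always be chosen (e.g. $\sqrt{c_m}$ when $c_m \geq 0$, or absorbed into one side of the factorization when $c_m < 0$, since we only need a \emph{bilinear} pairing realizing the sum, not a genuine inner product) — concretely, replace $\phi$ on the $v_j$ side by $\psi(y) = (c_0, c_1 y, \dots, c_k y^{\otimes k})$ and keep $\phi(x) = (1, x, \dots, x^{\otimes k})$, giving $\langle \phi(u_i), \psi(v_j)\rangle = \sum_m c_m \langle u_i, v_j\rangle^m = b_{ij}$ with no positivity needed.

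\textbf{Anticipated obstacle.} The only real subtlety is the dimension bookkeeping: verifying that $\langle x^{\otimes m}, y^{\otimes m}\rangle = \langle x,y\rangle^m$ holds with the standard inner product on the tensor power (immediate by multilinearity: $\langle x^{\otimes m}, y^{\otimes m}\rangle = \prod_{\ell=1}^m \langle x, y\rangle$), and then that we may work in the \emph{symmetric} power of dimension $\binom{r+m-1}{m}$ rather than the full tensor power of dimension $r^m$ — since $x^{\otimes m}$ is a symmetric tensor, it lies in $\mathrm{Sym}^m(\RR^r)$, and the inner product restricted there still computes $\langle x,y\rangle^m$. Summing $\sum_{m=0}^k \binom{r+m-1}{m} = \binom{r+k}{k}$ via the hockey-stick identity then closes the argument. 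Everything else is routine linear algebra; no positivity or field-dependence issues arise because the factorization is over $\RR$ with ordinary matrix product.
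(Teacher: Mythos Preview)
The paper does not prove this lemma; it is merely stated with a citation to \cite{barvinok2012approximations} and then used in the proof of Corollary~\ref{cor:Barvinok}. Your argument is correct and is the standard Veronese-embedding proof: with $a_{ij} = \langle u_i, v_j\rangle$ for $u_i,v_j \in \RR^r$, $r = \rank(A)$, each entry $b_{ij} = f(\langle u_i,v_j\rangle)$ is a polynomial of degree at most $k$ in the $r$ coordinates of $u_i$ (with coefficients depending on $v_j$ and the $c_m$), so the column space of $B$ lies in the span of the monomial vectors $(u_i^{\alpha})_i$ with $|\alpha| \le k$, of which there are $\binom{r+k}{k}$. Your symmetric-tensor packaging and hockey-stick summation $\sum_{m=0}^k \binom{r+m-1}{m} = \binom{r+k}{k}$ say exactly this, and absorbing the coefficients $c_m$ into the $\psi$-side factor is the correct way to avoid any positivity assumption on $f$.
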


%Note that (2) follows from (1) since if $| \mathcal{A} | = k$, then there exists a polynomial $f(t)$ of degree $k-1$ such that $\phi(t) = f(t)$.
%Once we have this lemma, the upper bound of Barvinok follows quickly.

\begin{corollary} \cite[Lemma 4.6]{barvinok2012approximations}\label{cor:Barvinok}
If the number of distinct entries in a nonnegative matrix $M$ does not exceed $k$, then $\rankpsd (M) \leq { {k-1 + \rank (M) } \choose {k-1}}$.
\end{corollary}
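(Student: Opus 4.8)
The plan is to combine Corollary~\ref{cor:upper bound from sqrt rank} with Lemma~\ref{lem:Barvinok}. Since $\rankpsd(M) \leq \sqrtrank(M)$, it suffices to exhibit a Hadamard square root of $M$ whose rank is at most $\binom{k-1+\rank(M)}{k-1}$. The key observation is that such a square root can be obtained not by choosing signs entry-by-entry but by choosing, once and for all, a real square root for each of the (at most $k$) distinct values occurring in $M$, so that the square root matrix is obtained from $M$ by applying a single scalar polynomial entrywise.

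First I would let $v_1,\dots,v_\ell$ be the distinct entries of $M$, where $\ell \leq k$, and for each $t$ fix $s_t \in \RR$ with $s_t^2 = v_t$ (for instance $s_t = \sqrt{v_t}$; the choice of signs is irrelevant for the argument). Since the $v_t$ are pairwise distinct, Lagrange interpolation provides a polynomial $g \in \RR[x]$ of degree at most $\ell-1 \leq k-1$ with $g(v_t) = s_t$ for every $t$. Setting $N_{ij} := g(M_{ij})$ then yields a matrix $N$ with $N_{ij}^2 = M_{ij}$ for all $i,j$, i.e.\ a Hadamard square root of $M$.

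Next I would apply Lemma~\ref{lem:Barvinok} with $A = M$, $B = N$ and $f = g$, viewing $g$ as a polynomial of degree at most $k-1$; this gives $\rank(N) \leq \binom{(k-1)+\rank(M)}{k-1}$, using that $\binom{d+\rank(M)}{d}$ is nondecreasing in $d$ so that the bound for $\deg g \le k-1$ is dominated by the one for $k-1$. Consequently $\sqrtrank(M) \leq \rank(N) \leq \binom{k-1+\rank(M)}{k-1}$, and Corollary~\ref{cor:upper bound from sqrt rank} yields $\rankpsd(M) \leq \sqrtrank(M) \leq \binom{k-1+\rank(M)}{k-1}$, as claimed.

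There is no genuinely hard step here: the only point requiring care is the degree bookkeeping — that $\ell$ distinct interpolation nodes force $\deg g \le \ell - 1$, so that the hypothesis $\ell \le k$ produces the exponent $k-1$ (rather than $k$) in the final binomial coefficient. The degenerate cases ($\ell = 1$, or $M$ the zero matrix) are covered as well, with $g$ a constant.
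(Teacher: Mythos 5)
Your proof is correct and takes essentially the same route as the paper: both fix one square root per distinct entry, interpolate by a polynomial of degree at most $k-1$, and then combine Lemma~\ref{lem:Barvinok} with Corollary~\ref{cor:upper bound from sqrt rank} via $\rankpsd(M) \le \sqrtrank(M)$. Your extra remark on the monotonicity of $\binom{d+\rank(M)}{d}$ in $d$ just makes explicit a point the paper leaves implicit.
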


\begin{proof}
Let $\mathcal{M}$ be the set of distinct entries in $M$ and $\phi\,:\, \mathcal{M} \rightarrow \RR$ be the square root function.
Since $| \mathcal{M} | \leq k$, there exists a polynomial $f(t)$ of degree $k-1$ such that $\phi(t) = f(t)$ on $\mathcal{M}$.
Then by Lemma~\ref{lem:Barvinok} and Corollary~\ref{cor:upper bound from sqrt rank} we have  that
$$\rankpsd (M) \leq \rank (\sqrt{M}) \leq \binom{k-1+\rank (M)}{k-1}.$$
\end{proof}

While we strongly suspect that it is NP-hard to compute psd rank,
there is no proof of this fact at the moment. The situation is clearer
for square root rank.

\begin{theorem}
The square root rank of a nonnegative matrix is NP-hard to compute.
\end{theorem}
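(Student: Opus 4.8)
The plan is to reduce from \textsc{Partition}, which is NP-complete: given positive integers $c_1,\dots,c_n$, decide whether $\sum_{i=1}^n\epsilon_ic_i=0$ for some $\epsilon\in\{-1,1\}^n$. From such an instance I would build the $(n+1)\times(n+1)$ nonnegative matrix
\[ M=\begin{bmatrix} I_n & \onevec\\ (c_1^2,\dots,c_n^2) & 0\end{bmatrix}, \]
that is, the top-left $n\times n$ block is the identity, the top-right block is the all-ones column in $\RR^n$, the bottom-left block is the row $(c_1^2,\dots,c_n^2)$, and the bottom-right entry is $0$. All entries of $M$ are $0$, $1$, or an integer $c_i^2$, so this is a polynomial-time reduction, and the goal is to show that $\sqrtrank(M)=n$ when the instance is a yes-instance and $\sqrtrank(M)=n+1$ otherwise; then any algorithm that computes $\sqrtrank$ also solves \textsc{Partition}.

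To prove the dichotomy I would first observe that every Hadamard square root $N$ of $M$ restricts, on rows and columns $1,\dots,n$, to a diagonal matrix with diagonal entries $\pm1$, so $\rank N\ge n$, while $\rank N\le n+1$ trivially. Hence the only question is whether some Hadamard square root attains rank exactly $n$. In any $N$, the $i$-th row ($i\le n$) equals $\delta_ie_i+\gamma_ie_{n+1}$ for signs $\delta_i,\gamma_i\in\{-1,1\}$; these $n$ rows are linearly independent, and a vector $(w_1,\dots,w_{n+1})$ lies in their span iff $w_{n+1}=\sum_{i=1}^n\gamma_i\delta_iw_i$. The last row of $N$ is $(\epsilon_1c_1,\dots,\epsilon_nc_n,0)$ with $\epsilon_i\in\{-1,1\}$, using $\sqrt{c_i^2}=c_i$ and $\sqrt0=0$. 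Therefore $\rank N=n$ exactly when $\sum_{i=1}^n(\gamma_i\delta_i\epsilon_i)c_i=0$, and since the composite sign $\gamma_i\delta_i\epsilon_i$ ranges freely over $\{-1,1\}$ in each coordinate as the signs of $N$ vary, such an $N$ exists iff $\sum_i\mu_ic_i=0$ for some $\mu\in\{-1,1\}^n$, i.e.\ iff the \textsc{Partition} instance is a yes-instance. The same construction also shows that deciding ``$\sqrtrank(M)\le n$?'' is NP-hard.

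The step I expect to be the main obstacle is precisely the span computation above: one must check carefully that allowing \emph{all} signs in the Hadamard square root to vary --- not only those of the last row --- cannot manufacture a rank-$n$ square root for a no-instance, which reduces to verifying that $\gamma_i\delta_i\epsilon_i$ really does range over all of $\{-1,1\}^n$. One should also confirm that the reduction is polynomial in the input size, which it is since $c_i^2$ at most doubles the bit-length of $c_i$. The remaining points --- linear independence of the first $n$ rows of $N$, and the trivial bounds $n\le\sqrtrank(M)\le n+1$ --- are routine.
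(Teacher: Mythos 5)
Your proposal is correct and follows essentially the same route as the paper: the same reduction from \textsc{Partition}, with the same $(n+1)\times(n+1)$ matrix (up to transpose), and the same dichotomy $\sqrtrank \in \{n, n+1\}$ decided by whether the last row lies in the span of the first $n$. The only cosmetic difference is that the paper normalizes by scaling rows and columns of the square root by $-1$ to fix the signs in the identity block, whereas you carry the signs $\delta_i,\gamma_i,\epsilon_i$ explicitly and observe that the composite $\gamma_i\delta_i\epsilon_i$ ranges freely over $\{-1,1\}^n$ --- both devices do the same work.
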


\begin{proof}
Recall that given a list of $n$ positive integers $a_1,\ldots,a_n$, the partition problem asks whether there exist sign choices $s_1,\ldots,s_n \in \{-1,1\}$ such that $\sum_{i=1}^{n}{s_i a_i} = 0$.  This problem is known to be NP-complete \cite{GareyJohnson}.

Given the integers $a_1,\ldots,a_n$, define an $(n+1) \times (n+1)$ matrix $A$ of the form:
\[ \left( \begin{array}{cccccc}
1 & 0 & 0 & \cdots & 0 & a_1^2 \\
0 & 1 & 0 & \cdots & 0 & a_2^2 \\
0 & 0 & 1 & \cdots & 0 & a_3^2 \\
\vdots & \vdots & \vdots & \ddots & \vdots & \vdots \\
0 & 0 & 0 & \cdots & 1 & a_n^2 \\
1 & 1 & 1 & \cdots & 1 & 0 \end{array} \right). \]

Since $A$ contains the $n \times n$ identity matrix as a submatrix, the square root rank of $A$ must be either $n$ or $n+1$.  If $\sqrt{A}$ is a Hadamard square root, then we may scale rows and columns of $\sqrt{A}$ by $-1$ and not affect the rank.  Thus, we may assume that the first $n$ columns of $\sqrt{A}$ are composed of zeros and positive ones.  With this assumption, we see immediately that there exists a Hadamard square root of rank $n$ if and only if the partition problem for $a_1,\ldots,a_n$ is satisfiable.
\end{proof}

\begin{remark}
Although the partition problem is NP-complete, it is only weakly NP-complete and admits a pseudo-polynomial time algorithm.  Thus, the above theorem does not rule out the existence of an algorithm for deciding $\sqrtrank$ that runs in time polynomial in the problem dimension and the \emph{magnitude} (not encoding length) of the matrix entries.  Furthermore, this embedding of the partition problem cannot hope to show that psd rank is NP-hard to compute.  To see this, consider the matrix $A$ corresponding to the partition problem with integers 5, 12, and 13:
\[ \left( \begin{array}{cccc}
1 & 0 & 0 & 25 \\
0 & 1 & 0 & 144 \\
0 & 0 & 1 & 169 \\
1 & 1 & 1 & 0 \end{array} \right).\]
This instance of the partition problem is not satisfiable, yet the matrix $A$ has a $3\times 3$ psd factorization.  Such a factorization is obtained by placing the matrices
\[ \left( \begin{array}{ccc}
1 & 0 & - \frac{5}{13} \\
0 & 1 & - \frac{12}{13} \\
- \frac{5}{13} & - \frac{12}{13} & 1 \end{array} \right),
\left( \begin{array}{ccc}
25 & 60 & 65 \\ 60 & 144 & 156 \\ 65 & 156 & 169 \end{array} \right) \]
on the fourth row and the fourth column, respectively, of $A$ and by placing the standard basis factorization of the identity in the first three rows and columns.
\end{remark}

\subsection{Lower bounds for psd rank}
\label{subsec:lower bounds for psd rank}
Lower bounding the psd rank has shown to be a difficult task.  In this section, we discuss the known lower bounding techniques and their limitations.

We say that two matrices of the same dimensions have the
same \textbf{support} if they share the same zero/nonzero pattern in
their entries.  Lower bounds based solely on the support of the matrix
have been shown to be quite powerful in the case of nonnegative rank
(see \cite{fiorini2013combbounds} for an overview).  In the case of
psd rank, their power is much more limited.  Given a matrix $M$, the
entry-wise square $M \circ M$ has the same support as $M$ and has psd
rank bounded above by $\rank (M)$ (Theorem~\ref{thm:firstprops}, part
(v)).  Thus, a purely support-based bound cannot produce a lower bound
that is higher than the rank of $M$.  This observation was extended by
Lee and Theis in \cite{leetheis2012support} as follows:

\begin{theorem} \cite[Theorem 1.1]{leetheis2012support}
Fix a support $Z$ and let $\mathcal{M}_Z$ be the set of all matrices sharing this support.  Then
\[ \min_{A \in \mathcal{M}_Z} \rank(A) = \min_{A \in \mathcal{M}_Z, A \geq 0} \rankpsd(A). \]
\end{theorem}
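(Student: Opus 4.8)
The plan is to prove the two inequalities between the minima separately. The inequality $\min_{A \in \mathcal{M}_Z}\rank(A) \ge \min_{A\in\mathcal{M}_Z,\, A\ge 0}\rankpsd(A)$ is the easy direction: I would take a real matrix $A\in\mathcal{M}_Z$ attaining the left-hand minimum and pass to its Hadamard square $A\circ A$. This matrix is nonnegative, has exactly the same support $Z$, and by Theorem~\ref{thm:firstprops}(v) satisfies $\rankpsd(A\circ A)\le\rank(A)$. Since $A\circ A$ is an admissible competitor on the right-hand side, the inequality follows immediately.

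For the reverse inequality the plan is constructive. I would fix a nonnegative $N\in\mathcal{M}_Z$ with $\rankpsd(N)=k$, together with a psd factorization $N_{ij}=\langle A_i,B_j\rangle$ with $A_i,B_j\in\S^k_+$, and write $A_i=X_iX_i^T$ and $B_j=Y_jY_j^T$ with $X_i,Y_j$ of size $k\times k$ (for instance the psd square roots, or Cholesky factors). The crux is the identity $N_{ij}=\trace(A_iB_j)=\|X_i^TY_j\|_F^2$: it shows that $N_{ij}=0$ forces the matrix $X_i^TY_j$ to vanish, while $N_{ij}\ne 0$ forces $X_i^TY_j\ne 0$. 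For vectors $w,z\in\RR^k$ I would then set $M_{ij}:=w^TX_i^TY_jz=\langle X_iw,\,Y_jz\rangle$. Letting the rows of $U$ and $V$ be the $(X_iw)^T$ and $(Y_jz)^T$, we have $M=UV^T$, so $\rank(M)\le k$ no matter how $w,z$ are chosen.

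It then remains to choose $w,z$ so that $\supp(M)=Z$. Whenever $N_{ij}=0$ we have $X_i^TY_j=0$ and hence $M_{ij}=0$ automatically, so the zero pattern of $N$ is always respected. Whenever $N_{ij}\ne 0$, the bilinear form $(w,z)\mapsto w^TX_i^TY_jz$ is not identically zero, so it vanishes only on a proper algebraic subset of $\RR^k\times\RR^k$; taking the finite union over all such $(i,j)$, a generic choice of $(w,z)$ avoids all of them and makes $M_{ij}\ne 0$ exactly when $N_{ij}\ne 0$. This gives $M\in\mathcal{M}_Z$ with $\rank(M)\le k$, hence $\min_{A\in\mathcal{M}_Z}\rank(A)\le k=\min_{A\in\mathcal{M}_Z,\, A\ge 0}\rankpsd(A)$, completing the argument.

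The main obstacle is the constructive direction, and within it the need to extract a matrix of rank at most $k$ — not $k^2$ — from a size-$k$ psd factorization: expanding $A_i=\sum_s a_{is}a_{is}^T$ and $B_j=\sum_t b_{jt}b_{jt}^T$ only yields $N_{ij}=\sum_{s,t}(a_{is}^Tb_{jt})^2$, a sum of $k^2$ squares, and the obvious square-root-type matrix then has rank merely bounded by $k^2$. The device that fixes this is the compression of the factorization by one generic linear functional on the row side and another on the column side, which collapses the $k^2$ bilinear terms into a single inner product in $\RR^k$ while a routine genericity argument — over the fixed finite-dimensional space of matrices with support $Z$ — keeps the support unchanged. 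The only points needing care are that the square roots $X_i,Y_j$ can be taken of size $k\times k$ (or $k\times\rank A_i$, which is just as good) and that the genericity argument need only avoid finitely many proper subvarieties; both are straightforward.
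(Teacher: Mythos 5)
Your proof is correct. Note that the paper does not actually give a proof of this theorem: it only presents the Hadamard-square argument for the easy direction (the observation that $\rankpsd(A\circ A)\le\rank(A)$ and $A\circ A$ has the same support as $A$, which is exactly your first paragraph) and then cites Lee and Theis for the full equality, so there is no in-paper proof of the hard direction to compare against. Your argument for that direction is sound and complete: writing $A_i=X_iX_i^T$, $B_j=Y_jY_j^T$ with $X_i,Y_j\in\RR^{k\times k}$, the identity $\trace(A_iB_j)=\|X_i^TY_j\|_F^2$ does pin down the zero pattern at the level of the matrices $X_i^TY_j$, and the rank-$\le k$ compression $M_{ij}=\langle X_iw, Y_jz\rangle$ together with the observation that each nonzero $X_i^TY_j$ gives a nonzero bilinear form (hence a proper algebraic zero set) makes the genericity argument go through — there are only finitely many such $(i,j)$, so a generic $(w,z)$ avoids all their zero sets while the zeros of $N$ are preserved automatically. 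You have also correctly identified the genuine pitfall: expanding $A_i$ and $B_j$ into sums of $k$ rank-one terms and taking an obvious square-root-type matrix only yields a rank bound of $k^2$, and the one-functional-per-side compression is precisely the device that recovers rank $\le k$ while the support is controllable.
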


If a nonnegative matrix has the property that it achieves the minimum rank possible among all matrices sharing its support, then the rank is a lower bound to the psd rank.  In particular, slack matrices of polytopes have this property.  In \cite{gouveia2013polytopes}, the authors showed the following corollary and characterized those polytopes that achieve this lower bound in $\RR^2$ and $\RR^3$.

\begin{corollary} \label{cor:rank lower bound to psd rank}\cite[Proposition~3.2]{gouveia2013polytopes}
If $P$ is an $n$-dimensional polytope with slack matrix $S_P$, then $\rank(S_P) = n+1 \leq \rankpsd (S_P)$.
\end{corollary}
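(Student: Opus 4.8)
The plan is to combine the Lee--Theis theorem (the displayed identity just above the statement) with two facts about the slack matrix: that \emph{every} matrix sharing its support has rank at least $n+1$, and that $\rank(S_P)$ itself equals $n+1$. Write $Z$ for the support of $S_P$ and $\mathcal M_Z$ for the set of all matrices with support $Z$. Since $S_P$ is a nonnegative matrix in $\mathcal M_Z$, the Lee--Theis theorem gives
\[ \rankpsd(S_P) \;\geq\; \min_{A\in\mathcal M_Z,\; A\geq 0}\rankpsd(A) \;=\; \min_{A\in\mathcal M_Z}\rank(A), \]
so it suffices to establish (a) $\rank(A)\geq n+1$ for every $A\in\mathcal M_Z$, and (b) $\rank(S_P)=n+1$. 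For the easy half of (b): writing $P=\conv\{v_1,v_2,\dots\}=\{x : a_j^Tx\leq b_j \text{ for all }j\}$ with facet-defining inequalities, the $j$-th column of $S_P$ equals $b_j\onevec - Va_j$, where $V$ is the matrix with rows $v_i^T$; hence every column lies in the column span of $[\,V\mid\onevec\,]$, so $\rank(S_P)\leq n+1$. The reverse inequality is the special case $A=S_P$ of (a), so (b) follows once (a) is proved.

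To prove (a), fix a complete flag of nonempty faces $G_0\subsetneq G_1\subsetneq\cdots\subsetneq G_n=P$ with $\dim G_k=k$, and set $G_{-1}=\emptyset$. For each $k\in\{0,\dots,n\}$: since the vertices of $G_k$ affinely span $G_k$ while $\dim G_{k-1}<k$, there is a vertex $v_{i_k}$ of $G_k$ with $v_{i_k}\notin G_{k-1}$; and since every proper face of $P$ is the intersection of the facets of $P$ containing it (for $k=0$ one uses instead that a bounded full-dimensional polytope has no vertex lying on all of its facets), there is a facet $F_{j_k}$ of $P$ with $G_{k-1}\subseteq F_{j_k}$ and $v_{i_k}\notin F_{j_k}$. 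Now look at the $(n+1)\times(n+1)$ submatrix of $A$ on rows $i_0,\dots,i_n$ and columns $j_0,\dots,j_n$. For $l<k$ we have $v_{i_l}\in G_l\subseteq G_{k-1}\subseteq F_{j_k}$, so $(S_P)_{i_lj_k}=0$ and therefore $A_{i_lj_k}=0$; for $l=k$, $v_{i_k}\notin F_{j_k}$ gives $(S_P)_{i_kj_k}\neq 0$, hence $A_{i_kj_k}\neq 0$. The same observations show the row indices $i_0,\dots,i_n$ are pairwise distinct (each $v_{i_k}$ avoids $G_{k-1}$, which contains every $v_{i_l}$ with $l<k$) and the column indices $j_0,\dots,j_n$ are pairwise distinct (each $v_{i_k}$ avoids $F_{j_k}$ but, for $l>k$, lies in $G_k\subseteq G_{l-1}\subseteq F_{j_l}$). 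Hence this submatrix is lower triangular with nonvanishing diagonal, so it is nonsingular, and $\rank(A)\geq n+1$.

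Putting the pieces together, $\rankpsd(S_P)\geq\min_{A\in\mathcal M_Z}\rank(A)\geq n+1=\rank(S_P)$, which is the claim. The main obstacle is step (a): producing the flag together with the matching sequences of vertices $v_{i_k}$ and facets $F_{j_k}$, and checking that the associated submatrix of $A$ is triangular with nonzero diagonal. This is precisely the point at which the polytopal origin of $S_P$ is used, via the standard facts that an $n$-dimensional polytope admits a complete flag of faces, that the vertices of a face affinely span it, and that every proper face is cut out by the facets containing it.
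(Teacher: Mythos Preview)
Your argument is correct and follows the same line the paper indicates: invoke the Lee--Theis identity and then verify that $S_P$ realizes the minimum rank among all matrices sharing its support. The paper itself does not prove this second fact but defers it to \cite{gouveia2013polytopes}; your complete-flag construction, yielding a lower-triangular nonsingular $(n+1)\times(n+1)$ submatrix (with the distinctness of the chosen row and column indices carefully checked), is exactly the standard way to supply it.
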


%\begin{theorem} \label{thm:minimal psd lift}
%An $n$-dimensional polytope $P \subset \RR^n$ has a $\PSD^{n+1}$-lift if and only if $\rankpsd (S_P) = n+1 =  \sqrtrank (S_P)$ where $S_P$ is a slack matrix of $P$ with respect to an irredundant facet description of $P$.
%\end{theorem}
%
%Since $\rank S_P = \dim{P} + 1 = n+1 \leq \rankpsd P$ always, the above result translates as $\rankpsd P = n+1$ if and only if $n+ 1 = \rank S_P = \rankpsd S_P =  \sqrtrank(S_P)$. Continuing in the spirit of Example~\ref{ex:rank equalities 1}, we first give families of matrices for which psd rank coincides with square root rank.
%
%\begin{example} \label{ex:rank equalities 2}
%A $n$-polytope $P \subset \RR^n$ is said to be $2$-level if $S_P$ is a $0/1$ matrix, i.e., the slack of every vertex with respect to every facet is either $0$ or $1$. Examples of $2$-level polytopes are simplices, hypersimplices, regular cross-polytopes, regular cubes, and stable set polytopes of perfect graphs. If $P$ is $2$-level, then by Corollary~\ref{cor:upper bound from sqrt rank},
%$\rankpsd P \leq \rank S_P = n+1$ which implies that $\rankpsd P = n+1$. Then, it must follow from the above discussion that  $\sqrtrank(S_P) = n+1$.
%\end{example}

%\marginparsmall{Hamza: perhaps we should say that this lower bound relies on the geometric interpretation of psd rank and only applies to slack matrices?}
To obtain stronger lower bounds, we need to move past using only the support of a matrix.  The only known lower bounding techniques that are not purely support based rely on the quantifier elimination theory of Renegar \cite{renegar1992intro} as seen in \cite{gouveia2011lifts}.

We give a high level discussion of the idea behind this technique and then the result. For complete proofs, see
\cite{gouveia2011lifts}.  Suppose we are given a convex set $C \subset \RR^n$ that has a psd lift into $\PSD^k$, i.e. there exists a linear map $\pi$ and an affine subspace $\cL$ such that $C = \pi(\cL \cap \PSD^k )$.  Then $\cL \cap \PSD^k$ is a semialgebraic set where the bounding polynomials have degree at most $k$.

\begin{theorem} \cite{renegar2006hyperbolic} \label{thm:Renegar derivatives}
Let $Q = \{ z \in \RR^m \,:\, C + \sum z_i A_i \succeq 0 \}$ be a spectrahedron with
$E := C+\sum z'_i A_i \succ 0$ for some $z' \in Q$, and $C, A_i$ are symmetric matrices of size $k \times k$. Then $Q$ is a semialgebraic set described by $g^{(i)}(z) \geq 0$ for $i=1,\ldots,k$
where $g^{(0)}(z) := \textup{det} (C + \sum z_i A_i)$ and $g^{(i)}(z)$ is the $i$-th Renegar derivative of $g^{(0)}(z)$ in direction $E$.
\end{theorem}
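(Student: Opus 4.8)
The plan is to recognize $g^{(0)}$ as a pullback of the determinant — the prototypical hyperbolic polynomial — and then invoke Renegar's theory of derivative relaxations of hyperbolic programs \cite{renegar2006hyperbolic}. First I would homogenize: introduce an auxiliary variable $z_0$, write $N(z_0,z) := z_0 C + \sum_{i=1}^m z_i A_i$, and set $\widetilde g(z_0,z) := \det N(z_0,z)$, a polynomial that is homogeneous of degree $k$ in $(z_0,z)$ and is not identically zero, since $\widetilde g(1,z') = \det E \neq 0$. Then $Q = \{\,z : (1,z)\in\widehat Q\,\}$ is the affine slice at $z_0 = 1$ of the cone $\widehat Q := \{(z_0,z) : N(z_0,z)\succeq 0\}$, and I would obtain $g^{(i)}$ by restricting to this slice the $i$-th directional derivative of $\widetilde g$ in a suitable direction. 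The identity $N(1,z) + tE = N(1+t,\, z+t z')$ shows that this restriction coincides with the pullback, along $z\mapsto N(1,z)$, of the $i$-th directional derivative $D_E^i\det$ of the determinant in direction $E$ — which is the natural meaning of ``the $i$-th Renegar derivative of $g^{(0)}$ in direction $E$.''

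Next I would check that $\widetilde g$ is hyperbolic with respect to the direction $w := (1,z')$. Along the line $\{(z_0,z)+tw : t\in\RR\}$,
\[
\widetilde g\bigl((z_0,z)+tw\bigr) = \det\bigl(N(z_0,z) + tE\bigr) = \det(E)\,\det\bigl(tI + E^{-1/2}N(z_0,z)E^{-1/2}\bigr),
\]
using $E\succ 0$; the roots in $t$ are the negatives of the eigenvalues of a real symmetric matrix, hence real, and $\widetilde g(w) = \det E > 0$. Thus $\widetilde g$ is hyperbolic with respect to $w$, and, because $\S^k_{++}$ is connected and is a connected component of $\{X\in\S^k : \det X\neq 0\}$, the open hyperbolicity cone of $\widetilde g$ is exactly $\{(z_0,z) : N(z_0,z)\succ 0\}$, whose closure is $\widehat Q$.

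Now I would quote Renegar \cite{renegar2006hyperbolic}: if $p$ is hyperbolic of degree $d$ with respect to $w$, then each directional derivative $p^{(i)} := D_w^i p$, $i = 0,1,\dots,d-1$, is again hyperbolic with respect to $w$, its closed hyperbolicity cone $\overline{\Lambda}_i$ contains $\overline{\Lambda}_0$, and $\overline{\Lambda}_0 = \bigcap_{i=0}^{d-1}\{\,x : p^{(i)}(x)\ge 0\,\}$. The inclusion ``$\subseteq$'' is immediate from the nesting $\overline{\Lambda}_0\subseteq\overline{\Lambda}_i\subseteq\{p^{(i)}\ge 0\}$; for ``$\supseteq$'', write the univariate restriction as $t\mapsto p(x+tw) = p(w)\prod_{j=1}^d (t-r_j)$, observe that $p^{(i)}(x) = i!\,p(w)\,e_{d-i}(-r_1,\dots,-r_d)$, and conclude that $p^{(i)}(x)\ge 0$ for all $i$ forces every elementary symmetric function of $(-r_1,\dots,-r_d)$ to be nonnegative, hence every $r_j \le 0$ (a real vector is nonnegative iff all its elementary symmetric functions are), i.e.\ $x\in\overline{\Lambda}_0$. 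Applying this with $p=\widetilde g$, $d=k$, and restricting to $z_0=1$ gives $Q = \{\,z : g^{(i)}(z)\ge 0,\ i=0,\dots,k-1\,\}$ with $g^{(i)}(z) := (D_w^i\widetilde g)(1,z)$ a polynomial in $z$ of degree at most $k-i$, which by the identification above is the $i$-th Renegar derivative of $g^{(0)}=\det N(1,\cdot)$ in direction $E$ (up to the customary normalization and indexing of the list of polynomials).

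I expect the main obstacles to be the two bookkeeping steps rather than any deep new argument: (i) correctly identifying the open hyperbolicity cone of $\widetilde g$ with the strict semidefinite slice, which rests on the connectedness of $\S^k_{++}$; and (ii) matching the homogeneous picture to the affine slice and the chosen normalization/indexing of ``the $i$-th Renegar derivative.'' The substantive content — that the successive directional derivatives remain hyperbolic with nested hyperbolicity cones, so that the resulting description of $Q$ consists of polynomials of degree at most $k$ — is precisely Renegar's theorem, which we invoke rather than reprove.
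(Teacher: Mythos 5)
The paper states this theorem as a bare citation to Renegar and supplies no proof of its own, so there is no argument in the paper to compare against. Your reconstruction is correct and is the natural one: homogenize to $\widetilde g = \det \circ N$, verify via $E^{1/2}$-conjugation that $\widetilde g$ is hyperbolic in direction $w = (1,z')$, identify its hyperbolicity cone with $\{N \succ 0\}$ and the closure with the homogenized spectrahedron, and invoke Renegar's theorem that the successive directional derivatives are hyperbolic with nested cones whose common nonnegativity locus recovers $\overline{\Lambda}_0$; your elementary-symmetric-function justification of the inclusion ``$\supseteq$'' (a monic real polynomial with nonnegative coefficients has no positive roots) is also right, as is the identification $(D_w^i\widetilde g)(1,z) = (D_E^i\det)(N(1,z))$ via $N(1,z)+tE = N(1+t,z+tz')$. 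One small tightening: the cleanest way to see that the hyperbolicity cone of $\widetilde g$ is $N^{-1}(\S^k_{++})$ is directly from the definition $\Lambda(p,w) = \{x : p(x+tw)\neq 0 \ \forall t \ge 0\}$, which pulls back through linear maps by inspection; the route via ``connected component'' needs an extra word, since the preimage of a connected set under a linear map is not automatically connected — here it is because $\S^k_{++}$ is convex (G\aa rding), so the preimage is convex. And the indexing discrepancy you flag ($i=0,\dots,k-1$ versus the theorem's $i=1,\dots,k$) is real but inessential; the theorem surely intends to include $g^{(0)}$ and exclude the constant $g^{(k)}$.
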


The work of Renegar says that when we project this set, the degree and number of the resulting bounding polynomials of $C$ are bounded in $k$ and $n$.

\begin{theorem} \cite[Theorem 1.1]{renegar1992intro}
\label{thm:renegar}
Given a formula of the form
$$\exists \,\,y \in \RR^{m-n} \,:\, g_i(x,y) \geq 0 \,\,\,\,\forall \,i=1, \ldots, s$$
where $x \in \RR^n$ and $g_i \in \RR[x,y]$ are polynomials of degree at most $d$,
there exists a quantifier elimination method that produces a quantifier free formula of the form
\begin{equation} \label{quantifier-free formula}
\bigvee_{i=1}^{I} \bigwedge_{j=1}^{J_i} (h_{ij}(x) \,\Delta_{ij} \,0)
\end{equation}
where $h_{ij} \in \RR[x]$, $\Delta_{ij} \in \{>, \geq, =, \neq, \leq, < \}$ such that
$$I \leq (sd)^{{K}n(m-n)}, \,\,J_i \leq (sd)^{{K}(m-n)}$$
and the degree of  $h_{ij}$ is at most $(sd)^{{K}(m-n)}$, where
$K$ is a constant.
\end{theorem}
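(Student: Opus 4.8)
The plan is to reduce the elimination to a \emph{parametrized point‑finding problem} and then bound the complexity of the resulting cell decomposition of the parameter space $\RR^n$. Fix the parameters $x$; the inner formula asserts that the semialgebraic set $S_x = \{\, y \in \RR^{m-n} : g_i(x,y) \geq 0,\ i = 1,\dots,s \,\}$ is nonempty. The truth value of ``$S_x \neq \emptyset$'' is constant on the cells of a suitable arrangement of hypersurfaces in $x$‑space, so it suffices (i) to produce polynomials $h_{ij} \in \RR[x]$ cutting out such an arrangement, with controlled degree and number, and (ii) to determine, cell by cell, the Boolean combination of their signs that characterizes nonemptiness; assembling these gives the required formula $\bigvee_i \bigwedge_j (h_{ij}(x)\,\Delta_{ij}\,0)$.

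First I would treat the core (non‑parametrized) problem: exhibit in $S_x$, whenever it is nonempty, a point whose coordinates are roots of an explicit polynomial system of singly‑exponential size. The tool is the \emph{critical point method}. One perturbs the data by several layers of infinitesimals $\varepsilon_1 \gg \varepsilon_2 \gg \cdots$ (working over a real closed extension such as $\RR\langle \varepsilon \rangle$): one infinitesimal intersects $S_x$ with a ball of large radius $1/\varepsilon$ to force boundedness; another replaces the closed conditions $g_i \geq 0$ by $g_i \geq -\varepsilon$ and shrinks the set of active constraints, so that the relevant set becomes a smooth, bounded, basic closed semialgebraic set defined by a small complete intersection; one then takes critical points of the squared distance to a generic point (or of a generic linear projection) restricted to the corresponding variety. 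A Sard‑type genericity argument makes this critical set finite, and a B\'ezout (or multihomogeneous B\'ezout) estimate bounds its cardinality and the degrees of the coordinates of its points by $(sd)^{O(m-n)}$. A limit argument ($\varepsilon \to 0$) brings a point of each connected component of $S_x$ back over $\RR$; in particular ``$S_x \neq \emptyset$'' is detected.

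Next I would carry the parameters $x$ through the whole construction: the critical equations become a polynomial system in the joint variables $(x, y, \text{auxiliary})$, cutting out a constructible set whose projection to $x$‑space I must describe. The values of $x$ at which the number of real solutions of the critical system changes, or at which two critical values coincide, are exactly the zeros of suitable subresultant, discriminant and resultant polynomials of that system; these are the $h_{ij}$. Their degrees are governed by the degrees and number of variables in the critical system, yielding the bound $(sd)^{O(m-n)}$ on $\deg h_{ij}$ and on the number $J_i$ of conjuncts, while ranging over all sign patterns of the constraints and all layers of the construction multiplies the count of top‑level disjuncts up to $(sd)^{O(n(m-n))}$. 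On each cell of the arrangement cut out by the $h_{ij}$ one evaluates at a sample point to read off whether $S_x$ is empty, and records the corresponding conjunction of sign conditions.

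The hard part is precisely the step that distinguishes this argument from classical cylindrical algebraic decomposition: performing the elimination of the \emph{entire} block $y \in \RR^{m-n}$ in one global critical‑point computation rather than one variable at a time. Eliminating variables successively squares the degree at each stage and produces the doubly‑exponential bound $d^{2^{m-n}}$; to obtain the stated singly‑exponential bounds one must (a) set up the infinitesimal deformations so that a single complete intersection handles non‑compactness, non‑smoothness and finiteness of critical points simultaneously, (b) keep the B\'ezout estimates tight under the parametrization so that the degrees of the subresultant polynomials $h_{ij}$ stay $(sd)^{O(m-n)}$, and (c) make the passage from $\RR\langle \varepsilon_1,\dots,\varepsilon_\ell \rangle$ back to $\RR$ uniform in $x$, so that the cell decomposition it induces on $\RR^n$ is genuinely finite with the claimed bounds. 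Carrying the bookkeeping of all sign patterns and infinitesimal layers through to the final exponents $Kn(m-n)$ and $K(m-n)$ is the delicate, technical core.
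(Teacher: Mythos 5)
This statement is an external theorem that the paper simply cites from Renegar's 1992 article; the paper gives no proof of it, so there is no argument of the paper to compare your sketch against. What you have written is not a proof in any case but an outline of a program, and you say so yourself in the final paragraph: the step that actually produces the singly-exponential bounds (``carrying the bookkeeping of all sign patterns and infinitesimal layers through to the final exponents'') is named as ``the delicate, technical core'' and then left entirely unaddressed. That core is the content of the theorem; a sketch that defers it has not proved anything.

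Two further remarks in case you pursue this. First, the method you describe --- critical points of a distance function or generic projection on a deformed complete intersection, with B\'ezout bounds and a limit $\varepsilon\to 0$ --- is closer in spirit to the Grigoriev--Vorobjov and Basu--Pollack--Roy line of work than to Renegar's actual argument, which is built around multihomogeneous $u$-resultants, a ``Projection Theorem,'' and Thom-encoding-style sign determination; both routes reach singly-exponential bounds, but they are organized quite differently and your cost accounting would have to follow whichever one you commit to. Second, the claim that the truth value of ``$S_x\neq\emptyset$'' is constant on cells cut out by discriminants and subresultants of the critical system, while morally right, requires a genuine argument (it is where specialization of the parametrized resultants can degenerate and where one must verify that the sample-point evaluation is well-defined uniformly in $x$); asserting it is not the same as establishing it. For a survey paper it is entirely appropriate to cite this theorem as a black box, which is what the authors do; if you want a proof you should read Renegar's paper or Chapter 14 of Basu--Pollack--Roy rather than reconstruct it.
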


Multiplying all of these polynomials together, we obtain a single polynomial, whose degree is bounded in $k$ and $n$, that vanishes on the boundary of $C$.  Hence, if we know that every polynomial that vanishes on the boundary of $C$ must have very high degree, then we can say that $C$ does not have a $\PSD^k$-lift for small $k$.  The {\em Zariski closure} of the
boundary of $C$ is a hypersurface in $\RR^n$ since the boundary of $C$ has codimension one. We define the {\em degree of $C$} to be the minimal degree of a (nonzero) polynomial whose zero set is the
Zariski closure of the boundary of $C$. By construction, this polynomial vanishes on the boundary of $C$.

\begin{proposition} \cite[Proposition~6]{gouveia2011lifts} \label{prop:degree bound}
If $C \subseteq \RR^n$ is a full-dimensional convex semialgebraic set with a $\PSD^k$-lift,
then the degree of $C$ is at most $k^{O(k^2n)}$.
\end{proposition}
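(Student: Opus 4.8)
The plan is to chain together the two results of Renegar quoted above. Starting from a $\PSD^k$-lift of $C$, I would use Theorem~\ref{thm:Renegar derivatives} to describe the spectrahedron sitting inside the lift by only $k$ polynomial inequalities, each of degree at most $k$; then apply the quantifier-elimination bound of Theorem~\ref{thm:renegar} to the existential formula obtained by projecting this spectrahedron onto $\RR^n$; and finally multiply together the polynomials occurring in the resulting quantifier-free description to get a single polynomial vanishing on $\partial C$ whose degree carries the claimed bound.

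First I would put the lift into normal form. Writing $C = \pi(\S^k_+ \cap \cL)$ and parametrizing the affine subspace $\cL \subseteq \S^k$ by auxiliary coordinates, one obtains, exactly as in the second half of the proof of Theorem~\ref{thm:psdlift}, an affine pencil $T$ and a representation
\[ C = \bigl\{\, x \in \RR^n \;:\; \exists\, y \in \RR^{m}\ \text{with}\ T(x,y) \succeq 0 \,\bigr\}, \]
where $T(x,y) = U_0 + \sum_i x_i U_i + \sum_j y_j V_j$ with $U_i, V_j \in \S^k$. Since $\dim \cL \le \binom{k+1}{2}$ and $C$ is full-dimensional in $\RR^n$, the number of auxiliary variables satisfies $m \le \binom{k+1}{2} - n = O(k^2)$. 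We may assume the spectrahedron $\{(x,y) : T(x,y) \succeq 0\}$ meets the interior of $\S^k_+$: otherwise it lies in a proper face of $\S^k_+$, which is itself a psd cone $\S^{k'}_+$ of smaller size, and we simply rerun the argument with $k'$ in place of $k$. Under this assumption Theorem~\ref{thm:Renegar derivatives} rewrites the spectrahedron using at most $k$ polynomial inequalities $g^{(i)}(x,y) \ge 0$, with $g^{(0)} = \det T$ and the remaining $g^{(i)}$ its Renegar derivatives, each of degree at most $k$.

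Next I would feed the formula $\exists\, y \in \RR^{m} : g^{(i)}(x,y) \ge 0$ into Theorem~\ref{thm:renegar} with parameters $s = k$ polynomials, degree bound $d = k$, $n$ free variables, and $m = O(k^2)$ quantified variables. This returns a quantifier-free description $C = \bigvee_{i=1}^I \bigwedge_{j=1}^{J_i} \bigl(h_{ij}(x)\, \Delta_{ij}\, 0\bigr)$ with
\[ I \le (k^2)^{O(k^2 n)} = k^{O(k^2 n)}, \qquad J_i \le (k^2)^{O(k^2)} = k^{O(k^2)}, \qquad \deg h_{ij} \le k^{O(k^2)}. \]
Let $h := \prod_{i,j} h_{ij}$, discarding any factor that is identically zero. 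Then $h$ is a nonzero polynomial of degree at most $I \cdot (\max_i J_i) \cdot (\max_{i,j} \deg h_{ij}) \le k^{O(k^2 n)}$, and $h$ vanishes on $\partial C$, since $\partial C \subseteq \bigcup_{i,j} V(h_{ij}) = V(h)$. Because $C$ is convex and full-dimensional its boundary has codimension one, so $\overline{\partial C}^{\mathrm{Zar}}$ is a hypersurface; each of its irreducible components is then an irreducible component of $V(h)$, hence the zero set of an irreducible factor of $h$. Consequently the squarefree polynomial of minimal degree cutting out $\overline{\partial C}^{\mathrm{Zar}}$ divides $h$, and the degree of $C$ is at most $\deg h = k^{O(k^2 n)}$.

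The delicate point is the exponent bookkeeping through the two Renegar theorems: the final bound $k^{O(k^2 n)}$ relies on the number of quantified variables being $O(k^2)$, which is exactly what $\dim \cL \le \binom{k+1}{2}$ guarantees, together with the number and degree of the Renegar-derivative polynomials both being linear in $k$. The interior-feasibility reduction and the passage from ``$h$ vanishes on $\partial C$'' to ``the degree of $C$ is at most $\deg h$'' are routine once that accounting is set up; the omitted semialgebraic details can be found in \cite{gouveia2011lifts}.
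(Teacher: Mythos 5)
Your proposal correctly reconstructs the paper's argument: it chains Theorem~\ref{thm:Renegar derivatives} (getting $k$ polynomial inequalities of degree $\leq k$) with Theorem~\ref{thm:renegar} (with $s=k$, $d=k$, and $O(k^2)$ quantified variables from $\dim\cL \leq \binom{k+1}{2}$), multiplies the resulting $h_{ij}$, and extracts the degree bound $k^{O(k^2 n)}$. The bookkeeping of exponents and the divisibility step from ``$h$ vanishes on $\partial C$'' to ``$\deg C \leq \deg h$'' match the paper's approach.
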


When $C$ is a polytope, the degree of $C$ is equal to the number of facets, i.e. the minimal polynomial vanishing on the boundary of $C$ is the product of all the linear polynomials determining the facets.  This lower bounds the psd rank of slack matrices of polytopes.

\begin{corollary} \cite[Corollary~4]{gouveia2011lifts} \label{cor:max facets with fixed psd rank}
If $C \subset \RR^n$ is a full-dimensional polytope whose slack matrix has psd rank $k$, then $C$ has
at most $k^{O(k^2n)}$ facets.
\end{corollary}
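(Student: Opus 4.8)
The plan is to deduce this from Theorem~\ref{thm:psdlift} and Proposition~\ref{prop:degree bound}, together with the elementary fact that the degree of a full-dimensional polytope (in the sense defined just before Proposition~\ref{prop:degree bound}) equals its number of facets.

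First I would apply Theorem~\ref{thm:psdlift} with $P = Q = C$: since $\rankpsd(S_C) = k$, there is an affine subspace $\cL \subseteq \S^k$ and a linear map $\pi$ with $C = \pi(\S^k_+ \cap \cL)$, i.e. $C$ has a $\PSD^k$-lift. A polytope is a (closed, basic) semialgebraic set, and $C$ is full-dimensional by hypothesis, so Proposition~\ref{prop:degree bound} applies and shows that the degree of $C$ is at most $k^{O(k^2 n)}$.

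It then remains to identify the degree of $C$ with its number $f$ of facets. Writing $C = \{x \in \RR^n : a_j^T x \le b_j,\ j = 1,\dots,f\}$ for the irredundant facet description, set $g(x) = \prod_{j=1}^{f}(b_j - a_j^T x)$. Every point of $\partial C$ lies on some facet hyperplane $H_j = \{a_j^T x = b_j\}$, so $g$ vanishes on $\partial C$; conversely each facet is full-dimensional inside $H_j$, hence Zariski-dense in $H_j$, so the Zariski closure of $\partial C$ is exactly $\bigcup_{j=1}^f H_j = \{g = 0\}$. Thus $g$ is a polynomial whose zero set is the Zariski closure of $\partial C$. For minimality, any nonzero $h$ with this property must vanish on each irreducible hyperplane $H_j$, hence be divisible by the linear form $b_j - a_j^T x$; as the $f$ linear forms are pairwise non-proportional (the description is irredundant and $C$ full-dimensional), their product $g$ divides $h$, so $\deg h \ge f$. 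Hence the degree of $C$ equals $f$, and combining with the previous paragraph gives $f \le k^{O(k^2 n)}$.

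The only step with genuine content beyond quoting the earlier results is the divisibility argument showing $\deg(C) = f$, and the one subtlety there is to use full-dimensionality of $C$ (and irredundancy of the inequality system) so that the facet hyperplanes are distinct and each facet is Zariski-dense in its supporting hyperplane; without full-dimensionality the notion of degree would not detect all the facets and the identity could fail.
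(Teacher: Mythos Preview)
Your proof is correct and follows essentially the same approach as the paper: invoke Theorem~\ref{thm:psdlift} to obtain a $\PSD^k$-lift of $C$, apply Proposition~\ref{prop:degree bound} to bound the degree of $C$, and identify the degree of a full-dimensional polytope with its number of facets. The paper simply asserts the last identification without argument, whereas you spell out the Zariski-closure and divisibility reasoning in detail; this is a welcome elaboration but not a departure from the paper's route.
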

%\begin{proof}
%If the psd rank of the slack matrix of $C$ is $k$ then $C$ has a
%$\PSD^k$-lift. By Proposition~\ref{prop:degree bound} the degree of
%$C$ is then at most $k^{O(k^2n)}$.  Suppose the $i$-th facet of $C$ is
%defined by the affine linear inequality $l_i(x) \geq 0$. Then the
%Zariski closure of this facet is the affine plane spanned by it, which
%has codimension $1$ since $C$ is full dimensional, hence its vanishing
%ideal is generated by $l_i(x)$.  If $l$ is a polynomial that vanishes
%on the boundary of $C$, it must vanish on each facet of $C$ and so $l$
%must be divisible by each $l_i$. In particular, ${\Pi_i} \, l_i$
%divides $l$. On the other hand, $\Pi_i \, l_i$ vanishes on the
%boundary of $C$.  Thus, $\Pi_i \, l_i$ is a minimal degree polynomial
%that vanishes on the boundary of $C$ which implies that the degree of
%$C$ is the number of facets of~$C$.
%\end{proof}

\begin{example} \label{ex:ngonsslack}
Corollary~\ref{cor:max facets with fixed psd rank} shows that as the number of facets in an $n$-dimensional polytope in $\RR^n$ increases, the psd rank of the slack matrix of the polytope has to increase. However, the rank of any such slack matrix stays fixed at $n+1$. For example, let $S_d$ be the slack matrix of a 
$d$-gon in the plane. Then by Corollary~\ref{cor:max facets with fixed psd rank},
$\rankpsd(S_d)$ grows to infinity as $d$ increases. As we have
seen before, however, $\rank(S_d)=3$ for all $d$. This provides a first example of a family of matrices with arbitrarily large gap between rank and psd rank.
\end{example}

For non-slack matrices, it can still be possible to apply this lower bound by viewing the matrix as a generalized slack matrix.
%In the next example, we construct a family of non-slack matrices with growing psd rank.

\begin{example} \label{ex:curve construction}
We now construct a matrix family that has the same zero pattern as the derangement matrices
and for which rank is three and psd rank grows arbitrarily large.
Let $C_d$ be a convex semialgebraic set in the plane whose bounding polynomial has degree $d$. By results of Scheiderer \cite{scheiderer2012representations},  $C_d$ has a $\PSD^r$-lift for some finite $r$, and suppose $k$ is the smallest such $r$. By Proposition~\ref{prop:degree bound}, $d \leq k^{O(k^2)}$.

Now pick $d^2+1$ distinct points on the boundary of $C_d$ and let $P$ be the convex hull of these points. Also, let $Q$ be the polyhedron whose facet inequalities are given by the tangent lines to $C_d$ at the vertices of $P$. Then the slack matrix of the pair $P,Q$, which was denoted as $S_{P,Q}$ in Section~\ref{subsec:geo interpretation}, is a nonnegative matrix with the same zero pattern as the derangement matrix. Call this nonnegative matrix $M_d$. By construction, the set $C_d$ is sandwiched between $P$ and $Q$ and its boundary passes through the vertices of $P$. If $C'$ is another convex semialgebraic set that is also sandwiched between $P$ and $Q$, then its boundary must also contain the vertices of $P$ because $P$ and $Q$ touch at these vertices. By Bezout's theorem, the degree of $C'$ must be at least $d+1$.
By Theorem~\ref{thm:psdlift}, the psd rank of $M_d$ is the smallest $k$ such that a slice of $\PSD^k$ projects
to a convex set nested between $P$ and $Q$. Since this smallest $k$ grows as the degree of the polynomial bounding the projected spectrahedron grows,
it must be that the psd rank of $M_d$ grows with $d$.

On the other hand,
$\rank (M_d) = 3$ since it is the slack matrix of a pair of polygons. Therefore, by choosing a family of convex semialgebraic sets
in the plane of increasing degree with the requirements specified above, one can obtain a family of nonnegative matrices of rank three and growing psd rank. For instance,
take $$C_{d=2t} := \{ (x,y) \in \RR^2 \,:\, x^{2t} + y^{2t} \leq 1 \}.$$
\end{example}

This quantifier elimination lower bound framework has proven useful for showing that certain families of matrices must have growing psd rank.  Its usefulness is limited, however, when considering the psd rank of a single matrix.  Other techniques have been developed to show matrices with high psd rank.  Bri\"{e}t et.~al. used a counting argument in \cite{briet2013} to show that most $0/1$-polytopes in $\RR^n$ have psd rank that is at least exponential in $n$.  Gouveia et.~al. produced a lower bound for generic polytopes (polytopes whose vertices are algebraically independent) in \cite{gouveia2013worst}, but again, these techniques are of limited usefulness when considering a single specific matrix.  An answer to the following problem would likely provide a new technique that is applicable to many open questions in this field.

\begin{problem}
Produce a $10 \times 10$ nonnegative matrix $M$ with integral entries such that $\rank (M) = 3$ and $\rankpsd (M) \geq 5$.
\end{problem}
\subsection{Comparisons between ranks}
We can now compare all the ranks seen so far. To keep track, we summarize the relationships in Table \ref{table:ranks}.

\begin{table}[h!]
 \caption{Relationships between various ranks}
 \begin{tabular}{c|||c|c|c|c}
                     & $\rank$     & $\rankplus$ & $\rankpsd$ & $\sqrtrank$ \\ \hline
 $\rank$        & $=$           & $\ll$~(\ref{ex:euclidean distance})            & $\ll$ (\ref{ex:ngonsslack}, \ref{ex:curve construction}) , $>$ ~(\ref{ex:derangement})       & $\ll$ (\ref{ex:prime corners matrix}), $ >$ ~(\ref{ex:euclidean distance}) \\
 $\rankplus$ & $\gg$         & $=$                                                      & $\gg$ (\ref{ex:euclidean distance})           & $\ll$ (\ref{ex:prime corners matrix}), $\gg$ (\ref{ex:euclidean distance})\\
 $\rankpsd$  & $< , \gg$    & $\ll$          & $=$             & $\ll$ (\ref{ex:prime corners matrix})  \\
 $\sqrtrank$  & $< , \gg$    & $\ll$ , $\gg$   & $\gg$          & $=$
\end{tabular}
\label{table:ranks}
\end{table}

The symbol $\ll$ refers to the rank indexing the row being arbitrarily
smaller than the one indexing the column (i.e. there does not exist a
function of the row rank that upper bounds the column rank).  The
symbol $<$ indicates that the rank on the row may be smaller than the
rank on the column, but the gap cannot not be arbitrarily large.  For
example, the entry in the $(1,2)$-position says that rank may be
arbitrarily smaller than nonnegative rank, but never larger.  The
$(1,3)$-entry says that rank may be smaller or larger than psd rank.
The gap in the first case may be arbitrarily large, but the gap in the
second case is controlled (see \eqref{eq:easy rank psd rank}). The
numbers in the table refer to examples exhibiting the relationship.

\begin{example} [{\bf Euclidean distance matrices}] \label{ex:euclidean distance}
Consider the $n \times n$ {\em Euclidean distance matrix} $M_n$ whose $(i,j)$-entry is
$(i-j)^2$. The rank of $M_n$ is three for all $n$ since $M_n = A_n^T B_n$ where
column $i$ of $A_n$ is $(i^2 , -2i , 1)$ and column
$j$ of $B_n$ is $(1 , j , j^2)$.

The square root rank and psd rank of $M_n$ are two for all $n$ since the matrix with $(i,j)$-entry equal to $i-j$ has usual rank two.
So for all $n$, the matrix $M_n$ has constant size rank, psd rank, and square root rank.

Now we show that $M_n$ has growing nonnegative rank.  Suppose $M_n$ has a $\RR^k_+$-factorization. Then there exists $a_1, \ldots, a_n, b_1,
\ldots, b_n \in \RR^k_+$ such that $\langle a_i, b_j \rangle \neq 0$
for all $i \neq j$. Notice that if $\supp(b_j)
\subseteq \supp(b_i)$ then $\langle a_i , b_i \rangle =0$ implies
$\langle a_i, b_j \rangle = 0$, and hence, all the $b_i$'s (and also
all the $a_i$'s) must have supports that are pairwise incomparable. Since there are at most 
$2^k$ possibilities for these supports, $n \leq 2^k$, or equivalently, $k \geq \log_2 n$. Therefore we get that 
$\rankplus(M_n) \geq \log_2 n$.
%By Sperner's lemma, the largest antichain in the Boolean lattice of
%subsets of $[k]$ has cardinality ${k \choose \lfloor \frac{k}{2}
%  \rfloor}$, and thus we get that $n \leq {k \choose \lfloor
%  \frac{k}{2} \rfloor}$. Therefore, $\rank_+(M_n)$ is bounded below by
%the smallest integer $k$ such that $n \leq {k \choose \lfloor
%  \frac{k}{2} \rfloor}$. For large $k$, we have ${k \choose \lfloor
%  \frac{k}{2} \rfloor} \approx \sqrt{\frac{2}{\pi k}} \cdot 2^k$, and
%the easy bound ${k \choose \lfloor \frac{k}{2} \rfloor} \leq 2^k$
%yields $\rankplus(M_n) \geq \log_2 n$.

In \cite{hrubes2012edm}, Hrube\v{s} exhibited a nonnegative factorization to show that the nonnegative rank of this family is actually $\Theta(\log_2 n)$.
\end{example}

\begin{example} [{\bf Prime matrices}] \label{ex:prime corners matrix}
Let $n_1,n_2,n_3,\ldots$ be an increasing sequence of positive integers such that $2n_k - 1$ is prime for each $k$.  Let $\mathcal{P}_k$ denote the set of all primes strictly less than $2n_k-1$.
Define a $k \times k$ matrix $Q^k$ such that $Q^k_{ij} = n_i + n_j - 1$.  Then $Q^k$ has usual rank two for all $k$.  Consequently, by Proposition~\ref{prop:rank1rank2}, the nonnegative rank and psd rank of $Q^k$ are also two for all $k$.
%Before proceeding, let's look at an example to see what such a family looks like.
For example, suppose our sequence has the form $2,3,4,6,\ldots$  Then $Q^1,\ldots,Q^4$ will have the form:
\[ \left( \begin{array}{c} 3 \end{array} \right),
\left( \begin{array}{cc} 3 & 4 \\ 4 & 5 \end{array} \right),
\left( \begin{array}{ccc} 3 & 4 & 5 \\ 4 & 5 & 6 \\ 5 & 6 & 7 \end{array} \right),
\left( \begin{array}{cccc} 3 & 4 & 5 & 7\\ 4 & 5 & 6 & 8\\ 5 & 6 & 7 & 9 \\ 7 & 8 & 9 & 11 \end{array} \right) . \]
Note that the top left block of each $Q^k$ is $Q^{k-1}$ and that the diagonal entries of $Q^k$ are the increasing sequence of primes, $2n_1-1,2n_2-1,\ldots$

We will prove by induction that $Q^k$ has full square root rank for each $k$.  The base case is clear so assume that $Q^{k-1}$ has full square root rank, i.e. every possible Hadamard square root of $Q^{k-1}$ has rank equal to $k-1$.  Fix a Hadamard square root of $Q^k$ and let $M$ be the matrix equal to this square root in every entry except $M_{kk}$.  Let $M_{kk}$ be the variable $x$. The determinant of $M$ has the form $\alpha x + \beta$ where $\alpha$ and $\beta$ are in the extension field $\QQ(\sqrt{\mathcal{P}_k})$.  By properties of the determinant, $\alpha$ is equal to the determinant of the top left $(k-1) \times (k-1)$ block.  Thus by the induction assumption, $\alpha$ is nonzero.  Hence, any $x$ making the determinant zero must also lie in $\QQ(\sqrt{\mathcal{P}_k})$.  However, our square root of $Q^k$ must have $x = \pm \sqrt{2n_k-1}$.  Thus, the square root of $Q^k$ must have full rank.
\end{example}

The remaining relationship shown in Table~\ref{table:ranks} that we have not discussed is $\rank > \sqrtrank$.  In the example after Definition~\ref{def:sqrt rank}, we saw that rank can be larger than square root rank.  The possible gap is controlled, however, since square root rank is an upper bound to psd rank and the gap between rank and psd rank is controlled.

%%%%%%%%%%%%%%%%%%%%%%%%%%%%%%%%%%%%%%%%%%%%%%%%%%%%%%%%%%%%%%%%%%%%%%%%%%
\section{Properties of factors}
\label{sec:properties}
The matrices used as factors in a positive semidefinite factorization
can sometimes be chosen to satisfy specific constraints. In this section we explore the ranks and norms of the factors used in a psd factorization.

\subsection{Rank of factors}

In \cite{leetheis2012support} it has been shown that we can always pick the factors of a factorization to have some bounded rank, depending only on the size of the matrix:
\begin{proposition}\cite[Lemma~4.5]{leetheis2012support}
If a $p \times q$ nonnegative matrix $M$ has a $\PSD^k$ factorization, then it has one using factors of rank at most $\sqrt{8q+1}/2$ for the rows and
at most $\sqrt{8p+1}/2$ for the columns.
\end{proposition}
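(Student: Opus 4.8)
The plan is to reduce the ranks of the row factors and the column factors separately, one side at a time, using the classical bound on the rank of an extreme point of a spectrahedron cut out by few linear equations. I would start from an arbitrary psd factorization $M_{ij} = \langle A_i, B_j\rangle$ with $A_i, B_j \in \S^k_+$. For each fixed $i$, I would pass to the slice
\[ \mathcal{F}_i = \{ X \in \S^k_+ \;:\; \langle X, B_j\rangle = M_{ij}, \ j = 1,\dots,q \}, \]
which is a nonempty (it contains $A_i$) closed convex subset of the pointed cone $\S^k_+$, hence contains no line, hence possesses an extreme point $X_i$. Replacing $A_i$ by $X_i$ for every $i$ yields again a valid psd factorization $X_1,\dots,X_p,B_1,\dots,B_q$ of $M$ of the same size, in which each row factor is now an extreme point of a spectrahedron defined by only $q$ equations.

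Next I would invoke the standard fact that an extreme point of a spectrahedron described by $m$ affine equations has rank $r$ with $\binom{r+1}{2}\le m$; applied with $m=q$ this gives $\rank(X_i)\le \frac{1}{2}(\sqrt{8q+1}-1)\le \frac{1}{2}\sqrt{8q+1}$. I would then repeat the construction with the roles of rows and columns interchanged, this time holding the $X_i$ fixed: for each $j$, take an extreme point $Y_j$ of $\mathcal{G}_j = \{Y\in\S^k_+ : \langle X_i, Y\rangle = M_{ij}, \ i=1,\dots,p\}$, which contains $B_j$, and conclude $\rank(Y_j)\le \frac{1}{2}\sqrt{8p+1}$. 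Since this second step does not disturb the $X_i$, the final collection $X_1,\dots,X_p,Y_1,\dots,Y_q$ is a psd factorization of $M$ of size $k$ realizing both claimed rank bounds simultaneously.

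The only genuine work is the rank bound on extreme points, which I would prove by the usual tangent-space argument: writing an extreme point $X$ of $\mathcal{F}_i$ as $X = VV^T$ with $V\in\RR^{k\times r}$ of full column rank $r$, the matrices $X + tVSV^T = V(I_r + tS)V^T$ are psd for $|t|$ small and every $S\in\S^r$, and they remain in $\mathcal{F}_i$ precisely when $\langle S, V^T B_j V\rangle = 0$ for all $j$; if the $q$ matrices $V^T B_j V \in \S^r$ did not span $\S^r$, a nonzero such $S$ would produce a line segment through $X$ inside $\mathcal{F}_i$, contradicting extremality, so $q\ge\dim\S^r = \binom{r+1}{2}$. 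The one remaining point to check is the existence of an extreme point of $\mathcal{F}_i$, which holds because $\mathcal{F}_i$ is a nonempty line-free closed convex set; alternatively one can simply cite Pataki's theorem (see also Barvinok's book) for the extreme-point rank bound and omit the perturbation computation. I expect that single perturbation computation to be the only real obstacle; the rest is bookkeeping.
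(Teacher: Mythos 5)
Your proof is correct and takes essentially the same route as the paper: fix one side of the factorization, replace each factor on the other side by an extreme point of the induced spectrahedron, invoke the Pataki/Barvinok rank bound on extreme points, and then repeat with the roles of rows and columns reversed. The only cosmetic difference is the order in which you process the two sides (you do rows first, the paper's sketch does columns first), which is immaterial by symmetry.
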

The reason is simple. If we fix the factors corresponding to rows, the
set of valid factors for any given column is the feasible set of a
semidefinite program, and standard results from convex optimization
guarantee the existence of a solution with bounded rank
(\cite{pataki1998rank},\cite{barvinok2001remark}). Fixing these column
factors and repeating the process over the rows we get the result.

We are particularly interested in knowing for which cases can the factors be chosen to have rank one. The answer turns out to be given by the $\sqrtrank(M)$ (Definition \ref{def:sqrt rank}).
\begin{proposition} \label{prop:square root rank and rank one factors}
The square root rank of $M$, $\sqrtrank(M)$, is precisely the smallest size of a psd factorization of $M$ comprised solely of rank one factors.
\end{proposition}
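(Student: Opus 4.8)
The plan is to establish the equality by proving the two inequalities separately: that the minimum size of a rank-one psd factorization of $M$ is at most $\sqrtrank(M)$, and that it is at least $\sqrtrank(M)$.

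For the first inequality I would simply reuse the construction from the proof of Theorem~\ref{thm:firstprops}(v), already recorded in Corollary~\ref{cor:upper bound from sqrt rank}. Let $\sqrt M$ be a Hadamard square root of $M$ attaining the minimum rank $r = \sqrtrank(M)$, and fix a rank-$r$ factorization $(\sqrt M)_{ij} = \langle a_i, b_j \rangle$ with $a_i, b_j \in \RR^r$. Putting $A_i = a_i a_i^T$ and $B_j = b_j b_j^T$ produces rank-one matrices in $\S^r_+$ with $\langle A_i, B_j \rangle = (a_i^T b_j)^2 = (\sqrt M)_{ij}^2 = M_{ij}$, i.e.\ a rank-one psd factorization of size $r$. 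Hence the minimum size of a rank-one psd factorization is at most $\sqrtrank(M)$ (and is in particular finite and well-defined).

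For the reverse inequality, suppose $M$ admits a psd factorization of size $k$ in which every factor $A_i, B_j$ has rank one. By the spectral theorem a rank-one psd matrix of size $k$ is an outer product of a real vector with itself, so I can write $A_i = a_i a_i^T$ and $B_j = b_j b_j^T$ with $a_i, b_j \in \RR^k$. Then $M_{ij} = \langle A_i, B_j \rangle = (a_i^T b_j)^2$, so the matrix $N$ with entries $N_{ij} := a_i^T b_j$ is a Hadamard square root of $M$; note that $N_{ij} = 0$ whenever $M_{ij} = 0$, as required. Collecting the vectors into $U = [\,a_1 \mid \cdots \mid a_p\,] \in \RR^{k \times p}$ and $V = [\,b_1 \mid \cdots \mid b_q\,] \in \RR^{k \times q}$ gives $N = U^T V$, whence $\rank(N) \le k$ and therefore $\sqrtrank(M) \le k$. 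Taking $k$ to be the smallest size of a rank-one psd factorization yields $\sqrtrank(M) \le$ that size, and combining with the previous paragraph gives the claimed equality.

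There is no substantial obstacle here; the argument is essentially a bookkeeping exercise matching outer-product factors with entrywise square roots. The only points that deserve a word of care are that a rank-one psd matrix really does factor as $aa^T$ over the reals (so there is no complex sign ambiguity to worry about), and that squaring the inner products forces the zero pattern of $N$ to agree with that of $M$, so that $N$ is a legitimate Hadamard square root in the sense of Definition~\ref{def:sqrt rank}.
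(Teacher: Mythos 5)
Your proof is correct and takes essentially the same route as the paper's: the forward direction reuses the outer-product construction from Theorem~\ref{thm:firstprops}(v), and the reverse direction reads off a Hadamard square root $N = U^T V$ of rank at most $k$ from a rank-one psd factorization. The only (minor) difference is that you are slightly more careful than the paper in writing $\rank(N) \le k$ rather than $\rank(N) = k$, and in noting explicitly that the zero pattern of $N$ matches that of $M$; both are sensible clarifications but do not change the argument.
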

\begin{proof}
As seen in the proof of Theorem~\ref{thm:firstprops} (v), if $M=N \hadprod N$ and $N$ has rank $k$, then we can take a rank factorization of $N$, $N=A^TB$, and use it to create the matrices
$A_i = a_i a_i^T$ and $B_j =b_j b_j^T$, where $a_i$ and $b_j$ range over the columns of $A$ and $B$ respectively. These matrices $A_i$ and $B_j$ have rank one and, by construction, form a $\PSD^k$ factorization of $M$.

To prove the reverse implication just note that if $v_iv_i^T$ and $w_jw_j^T$ form a $\PSD^k$ factorization of $M$ then setting $V$ and $W$ to be the matrices whose columns are the $v_i$ and the $w_j$ respectively, we
can obtain a matrix $V^TW$ that has rank $k$ and is a Hadamard square root of $M$.
\end{proof}

In general, we expect the gap between $\sqrtrank(M)$ and $\rankpsd(M)$
to be arbitrarily high, as illustrated in Example~\ref{ex:prime
corners matrix}, but good knowledge of rank one factorizations can
potentially provide some insight into general factorizations.

\begin{remark}\label{rem:rank1projection}
Let $M$ be a $p \times q$ nonnegative matrix with $\rankpsd(M)=k$. Suppose $A_i$, $i=1,\ldots,p$ and $B_j$, $j=1,\ldots,q$ form a $\PSD^k$ factorization of $M$. Each $A_i$ can be written as $\sum_{l=1}^k v_{i,l} v_{i,l}^T$, and each $B_j$ as $\sum_{l=1}^k w_{j,l} w_{j,l}^T$. Define $N$ as the matrix indexed by $\{1, \ldots, p\} \times \{1, \ldots, k\}$ and $\{1, \ldots, q\} \times \{1, \ldots, k\}$ whose entry $N_{\{i,s\},\{j,r\}}$ is given by $(v_{i,s}^T w_{j,r})^2$. 

Then, $N \in \RR_+^{pk \times qk}$ consists of $p\times q$ blocks of size $k \times k$ and $\sqrtrank(N)=k$. Furthermore, summing all the entries of block $(i,j)$ of $N$ gives us entry $(i,j)$ of $M$. 
\end{remark}

This remark allows us to transfer properties from rank one factorizations to general factorizations, an example can be seen at the end of the next subsection.

\subsection{Norms of factors}

Besides rank, another useful quantity to control in the factors is
their size. In Section \ref{sec:definitions} we used a lemma showing that the factors of a semidefinite factorization can be rescaled to have small
trace. This was instrumental in establishing the lower semicontinuity
of psd rank. We restate the lemma here and provide a proof:
\begin{lemma}
\label{lem:simplescaling2}
Let $M \in \RR^{p\times q}_+$ and assume that $M$ has a psd factorization of size $k$. Then $M$ admits a psd factorization $M_{ij} =\langle A_i, B_j \rangle$ of size $k$ where the factors satisfy $\trace(A_i)\leq k$ and $\trace(B_j) = \sum_{i=1}^p M_{ij}$.
\end{lemma}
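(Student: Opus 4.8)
The plan is to exploit the congruence invariance of psd factorizations. If $M_{ij} = \langle A_i, B_j \rangle$ with $A_i, B_j \in \S^k_+$ and $G$ is an invertible $k \times k$ matrix, then $M_{ij} = \langle G A_i G^T,\, G^{-T} B_j G^{-1} \rangle$ is again a psd factorization of the same size: congruence preserves positive semidefiniteness, and $\trace(G A_i G^T G^{-T} B_j G^{-1}) = \trace(A_i B_j)$ by cyclicity of trace. The whole lemma will come from choosing one good $G$.

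First I would set $\Sigma_A := \sum_{i=1}^p A_i \in \S^k_+$ and reduce to the case $\Sigma_A \succ 0$. If $d := \rank \Sigma_A < k$, then after conjugating the entire factorization by a suitable orthogonal matrix (harmless: it preserves size, positive semidefiniteness, inner products and traces) I may assume $\Sigma_A = \Sigma' \oplus 0_{k-d}$ with $\Sigma' \succ 0$ of size $d$. Since $0 \preceq A_i \preceq \Sigma_A$, a short block computation forces $A_i = A_i' \oplus 0_{k-d}$, so $M_{ij} = \langle A_i', B_j' \rangle$, where $B_j'$ is the leading $d \times d$ block of $B_j$ and $A_i', B_j' \in \S^d_+$. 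This is a psd factorization of $M$ of size $d$ whose associated sum $\sum_i A_i' = \Sigma'$ is positive definite, so the argument of the next paragraph applies with $k$ replaced by $d$; padding the resulting $d \times d$ factors with zero blocks restores size $k$, keeps every $A$-trace equal to its $d \times d$ value (hence $\leq d \leq k$), and leaves the $B$-traces, and the identity $\trace(B_j) = \sum_i M_{ij}$, untouched.

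Assuming now $\Sigma_A \succ 0$, I would take $G := \Sigma_A^{-1/2}$ and replace $A_i$ by $\widetilde A_i := \Sigma_A^{-1/2} A_i \Sigma_A^{-1/2}$ and $B_j$ by $\widetilde B_j := \Sigma_A^{1/2} B_j \Sigma_A^{1/2}$. Then $\sum_{i=1}^p \widetilde A_i = \Sigma_A^{-1/2} \Sigma_A \Sigma_A^{-1/2} = I_k$, and both conclusions drop out simultaneously: $\sum_i \trace(\widetilde A_i) = \trace(I_k) = k$ with all summands nonnegative gives $\trace(\widetilde A_i) \leq k$, while $\trace(\widetilde B_j) = \langle I_k, \widetilde B_j \rangle = \sum_i \langle \widetilde A_i, \widetilde B_j \rangle = \sum_i M_{ij}$.

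All of this is routine except for the degenerate case $\Sigma_A$ singular, where one must be slightly careful to still end up with a factorization of the prescribed size $k$ rather than merely $d$; that is the one step I would write out in full detail. The remainder is the cyclic-trace identity and the verification that congruence is a legitimate operation on psd factorizations.
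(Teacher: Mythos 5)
Your proposal is correct and takes essentially the same route as the paper: conjugate the row factors by $\Sigma_A^{-1/2}$ and the column factors by $\Sigma_A^{1/2}$ so that the new row factors sum to the identity, from which both conclusions follow immediately. The one genuine refinement you add is the explicit treatment of the degenerate case $\Sigma_A$ singular; the paper simply writes $S^{-1/2}$ without comment, implicitly assuming invertibility (or implicitly invoking the pseudo-square-root, whose correctness then requires exactly the block argument you spell out: that $0\preceq A_i\preceq\Sigma_A$ forces $A_i$ to vanish on the kernel of $\Sigma_A$). Your version is therefore a bit more complete, but the central mechanism and the verification steps are identical.
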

\begin{proof}
Let $M_{ij} = \langle \widehat{A}_i, \widehat{B}_j \rangle$ be an arbitrary psd factorization of $M$ of size $k$. Let $S = \sum_{i=1}^p \widehat{A}_i \in \S^k_+$. Define $A_i = S^{-1/2} \widehat{A}_i S^{-1/2}$ and let $B_j = S^{1/2} \widehat{B}_j S^{1/2}$. Note that $\langle A_i, B_j \rangle = \langle \widehat{A}_i, \widehat{B}_j \rangle = M_{ij}$ and so $A_1,\dots,A_p,B_1,\dots,B_q$ give a valid psd factorization of $M$ of size $k$. Observe that by construction we have $\sum_{i=1}^p A_i = I$ (where $I$ is the $k\times k$ identity matrix), thus necessarily each $A_i$ satisfies $A_i \preceq I$ and thus $\trace(A_i) \leq k$. Also we have for any $j=1,\dots,q$, $\trace(B_j) = \trace((\sum_{i=1}^p A_i)B_j) = \sum_{i=1}^p \trace(A_i B_j) = \sum_{i=1}^p M_{ij}$ as desired.
\end{proof}

Note that this is equivalent to saying that we can choose $A_i$ and $B_j$ all with trace bounded by $\sqrt{k \|M\|_{1,1}}$ where $\|M\|_{1,1}$ is the matrix norm induced by the $1$-norm in $\RR^k$. 
This looks very similar to another rescaling result that has proven very useful, the rescaling result in~\cite{briet2013}, which was used to show that
there are $0/1$-polytopes with only exponential-sized semidefinite
representations.  The result states that if a matrix $M$ has psd rank
$k$, then it has a semidefinite factorization where each factor has
largest eigenvalue less than or equal to $\sqrt{k \|M\|_{\infty}}$, where 
$\|M\|_{\infty}$ is the maximum absolute value of an entry in $M$.

In the remainder of this section we present a new, simplified proof of this
fact. As in~\cite{briet2013}, the main tool we need is a
version of \emph{John's ellipsoid theorem}.

\begin{theorem}[John's Theorem] Let $C$ be a full dimensional convex set in $\RR^n$ and let $T:\RR^n\rightarrow \RR^n$ be a linear map such that the image of the unit ball $B^n$
under $T$ is the unique minimum volume ellipsoid $E$ containing $C \cup -C$. Then $$\frac{1}{n} TT^T \in \conv(\{vv^T : v \in \textup{Boundary}(C) \cap \textup{Boundary}(E)\}).$$
\end{theorem}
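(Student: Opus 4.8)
The plan is to reduce the statement to the classical, origin-symmetric formulation of John's theorem by a linear change of coordinates, and then transport the resulting decomposition of the identity back through $T$.

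First I would pass to the body $K := T^{-1}(C)$ and check that $B^n = T^{-1}(E)$ is the minimum-volume ellipsoid containing $K \cup -K = T^{-1}(C\cup -C)$. Indeed $T^{-1}$ carries ellipsoids to ellipsoids and multiplies every volume by the fixed factor $|\det T|^{-1}$, so it preserves the Löwner--John property; and $T^{-1}(E)\supseteq T^{-1}(C\cup -C)$ while $T^{-1}(E)=B^n$. (We may also replace $C$ by $\cl C$, which affects neither $\partial C$ nor the ellipsoid.) Since $C\cup -C$ is origin-symmetric, uniqueness of the Löwner--John ellipsoid forces $E$, hence $B^n$, to be centered at the origin: $-E$ is an ellipsoid of the same volume containing $-(C\cup -C)=C\cup -C$, so $-E=E$.

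Now I would invoke the classical John theorem in its symmetric form: when $B^n$ is the minimum-volume ellipsoid containing an origin-symmetric convex body, there exist contact points $u_1,\dots,u_m \in \partial(K\cup -K)\cap S^{n-1}$ and weights $c_1,\dots,c_m>0$ with $\sum_i c_i u_iu_i^T = I_n$; taking traces gives $\sum_i c_i = n$. A short bookkeeping step handles the difference between $\partial(K\cup -K)$ and $\partial K$: a boundary point of $K\cup -K$ is not interior to $K$ nor to $-K$, so it lies in $\partial K \cup (-\partial K)$; replacing $u_i$ by $-u_i$ when it lands in $-\partial K$ is legal, since $B^n$ is symmetric (so $-u_i\in\partial B^n$ too) and $u_iu_i^T=(-u_i)(-u_i)^T$. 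Thus we may assume each $u_i\in \partial K \cap \partial B^n$.

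Finally I would push everything forward by $T$. Set $v_i := Tu_i$; then $v_i \in T(\partial K)\cap T(\partial B^n) = \partial C \cap \partial E$, and conjugating the identity by $T$ yields $\sum_i c_i v_iv_i^T = T\bigl(\sum_i c_i u_iu_i^T\bigr)T^T = TT^T$. Dividing by $n$ and using $\sum_i (c_i/n)=1$ with $c_i/n>0$ exhibits $\tfrac1n TT^T$ as a convex combination of the rank-one matrices $v_iv_i^T$ with $v_i\in \textup{Boundary}(C)\cap\textup{Boundary}(E)$, which is exactly the claim. The only genuine content beyond these routine manipulations is the classical John theorem itself, i.e.\ the existence of the contact-point decomposition $\sum_i c_i u_iu_i^T = I_n$, which I would cite rather than reprove; if a self-contained argument were wanted, the hard part is the Lagrange-multiplier/KKT analysis of the optimization problem ``minimize the volume among ellipsoids containing $K$,'' whose stationarity conditions produce precisely that outer-product decomposition of the identity.
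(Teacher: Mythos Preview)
The paper does not prove John's Theorem; it is stated as a classical result and then used to derive Corollary~\ref{cor:inner_rescaling}. So there is no ``paper's own proof'' to compare against.

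Your reduction is correct and is exactly the standard way to obtain this formulation: pull back by $T^{-1}$ so that the unit ball becomes the L\"owner--John ellipsoid of $K\cup -K$, invoke the classical symmetric John decomposition $\sum_i c_i u_iu_i^T = I_n$ at the contact points, fix the bookkeeping so that the $u_i$ lie in $\partial K$ rather than merely $\partial(K\cup -K)$, and then push forward by $T$ to get $\tfrac1n TT^T$ as a convex combination of $v_iv_i^T$ with $v_i\in\partial C\cap\partial E$. Each step is routine, and your handling of the $\partial(K\cup -K)$ versus $\partial K$ distinction (using that a boundary point of the union cannot be interior to either piece, and that $u_iu_i^T$ is sign-invariant) is the right move. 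The only substantive input is the classical John theorem itself, which is precisely what the paper is quoting.
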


A simple consequence of John's Theorem has to do with scalability of inner product realizations.

\begin{corollary} \label{cor:inner_rescaling}
Suppose $U, V \subseteq \RR^n$ are bounded and each span $\RR^n$, and $\Delta= \max_{u \in U, v\in V}\left|\left<u,v\right>\right|$. Then there exists a linear operator $L:\RR^n\rightarrow \RR^n$ such that $\max_{u \in U} \|Lu\|_2$ and $\max_{v \in V} \|(L^{-1})^Tv\|_2$ are both less than or equal to $n^{1/4}\sqrt{\Delta}$.
\end{corollary}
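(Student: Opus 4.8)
The plan is to apply John's Theorem to a suitably chosen convex body built from the sets $U$ and $V$, extract the resulting inner-product bound, and then rescale. First I would set $C := \conv(U \cup -U)$, which is a full-dimensional, symmetric (hence $C = -C$) convex body in $\RR^n$ since $U$ spans $\RR^n$ and is bounded. Let $T : \RR^n \to \RR^n$ be the linear map furnished by John's Theorem, so that $T(B^n) = E$ is the minimal-volume ellipsoid containing $C \cup -C = C$, and
\[
\frac{1}{n} T T^T \in \conv\bigl(\{vv^T : v \in \textup{Boundary}(C) \cap \textup{Boundary}(E)\}\bigr).
\]
I would then take $L := T^{-1}$ as the candidate operator. (One must check $T$ is invertible: since $C$ is full-dimensional the ellipsoid $E$ is full-dimensional, so $T$ is nonsingular.)

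The first main step is to bound $\max_{u \in U}\|Lu\|_2$. Since $U \subseteq C \subseteq E = T(B^n)$, every $u \in U$ satisfies $T^{-1}u = Lu \in B^n$, i.e. $\|Lu\|_2 \le 1$. So in fact this quantity is at most $1 \le n^{1/4}\sqrt{\Delta}$, provided $\Delta \ge n^{-1/2}$; but we can also absorb the trivial case separately, or simply note that the bound we will derive for the $V$-side is the binding one and symmetrize at the end by an overall scaling of $L$ by a constant $t>0$ (replacing $L$ by $tL$ scales the two maxima by $t$ and $1/t$), choosing $t$ to balance the two bounds. So the real content is the $V$-side.

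The second and key step is to bound $\max_{v \in V}\|(L^{-1})^T v\|_2 = \max_{v\in V}\|T^T v\|_2$. Fix $v \in V$. Using the John decomposition, write $\frac1n TT^T = \sum_k \lambda_k w_k w_k^T$ with $\lambda_k \ge 0$, $\sum_k \lambda_k = 1$, and each $w_k \in \textup{Boundary}(C)$. Then
\[
\|T^T v\|_2^2 = v^T T T^T v = n \sum_k \lambda_k (w_k^T v)^2 \le n \max_k (w_k^T v)^2.
\]
Now each $w_k$ lies on the boundary of $C = \conv(U \cup -U)$, hence is a convex combination of points of $U \cup -U$; since $x \mapsto (x^T v)^2$ is convex, $(w_k^T v)^2 \le \max_{u \in U \cup -U}(u^T v)^2 = \max_{u \in U}(u^T v)^2 \le \Delta^2$ by the definition of $\Delta$. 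Therefore $\|T^T v\|_2^2 \le n\Delta^2$, i.e. $\|(L^{-1})^T v\|_2 \le \sqrt{n}\,\Delta$.

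Finally I would rescale to balance. With $L_0 := T^{-1}$ we have $\max_{u\in U}\|L_0 u\|_2 \le 1$ and $\max_{v\in V}\|(L_0^{-1})^T v\|_2 \le \sqrt n\,\Delta$. Replacing $L_0$ by $L := t L_0$ with $t := (\sqrt n \,\Delta)^{1/2} / 1^{1/2} \cdot$ (the geometric-mean balancing factor, i.e. $t = n^{1/4}\Delta^{1/2}$ up to the trivial factor), both $\max_{u\in U}\|Lu\|_2$ and $\max_{v\in V}\|(L^{-1})^T v\|_2$ become at most $\sqrt{t \cdot 1} = \sqrt{1 \cdot \sqrt n \,\Delta}\,^{1/2}$... more carefully: the balanced bound is $\sqrt{(\text{bound}_U)(\text{bound}_V)} = \sqrt{1 \cdot \sqrt n\,\Delta} = n^{1/4}\sqrt{\Delta}$, which is exactly the claimed estimate. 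I expect the main obstacle to be purely expository: making sure the symmetric body $C = \conv(U \cup -U)$ interacts correctly with the $C \cup -C$ appearing in John's Theorem (here $C$ is already symmetric, so $C \cup -C = C$), and confirming that boundary points of $C$ can be handled by the convexity argument above rather than needing them to be extreme points. No deep difficulty is anticipated beyond careful bookkeeping of the rescaling constant.
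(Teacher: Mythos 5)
Your proposal is correct and follows essentially the same route as the paper: take the John ellipsoid of the (symmetrized) convex hull of $U$, use the John decomposition of $TT^T$ to bound $\|T^T v\|_2 \le \sqrt{n}\,\Delta$, note $\|T^{-1}u\|_2 \le 1$, and then rescale $T^{-1}$ by the geometric-mean factor $n^{1/4}\sqrt{\Delta}$. The only cosmetic difference is that you symmetrize $C$ upfront as $\conv(U\cup -U)$ whereas the paper takes $C=\conv(U)$ and lets the $C\cup -C$ in the statement of John's theorem do the symmetrizing; you also make explicit the convexity argument bounding $(w_k^T v)^2$, which the paper leaves implicit.
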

\begin{proof}
Consider $C=\conv(U)$, and $T:\RR^n\rightarrow \RR^n$ such that $T(B^n)$ is the minimum volume ellipsoid containing $C \cup -C$. For all $u \in U$, $\|T^{-1}(u)\|_2 \leq 1$ by construction. Furthermore by John's theorem for any $v\in V$
$$\|T^T(v)\|_2^2=v^TTT^Tv=v^T n (\sum_i \lambda_i u_iu_i^T) v \leq n \sum_i \lambda_i \left< u_i,v\right>^2$$
with $\lambda_i \geq 0$ and $\sum \lambda_i = 1$. But this implies $\|T^T(v)\|_2^2 \leq n \Delta^2$ and therefore $\|T^T(v)\|_2 \leq \sqrt{n} \Delta$. By making $L=n^{1/4} \sqrt{\Delta} {T}^{-1}$ we get the intended result.
\end{proof}

This immediately gives us the fact that the usual matrix factorization is scalable.

\begin{corollary}\label{cor:usual_rank_scalability}
If $M\in \RR^{p\times q}$ has rank $k$, then there exist $A \in \RR^{k \times p}$, $B \in \RR^{k \times q}$ such that $M=A^TB$ and
the maximum $2$-norm of a column of $A$ or $B$ is at most $k^{1/4} \sqrt{\|M\|_\infty}$.
\end{corollary}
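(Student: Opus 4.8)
The plan is to deduce this immediately from Corollary~\ref{cor:inner_rescaling}. First I would fix an \emph{arbitrary} rank factorization $M = \widehat{A}^T \widehat{B}$ with $\widehat{A}\in\RR^{k\times p}$ and $\widehat{B}\in\RR^{k\times q}$, and let $U=\{u_1,\dots,u_p\}$, $V=\{v_1,\dots,v_q\}$ be the sets of columns of $\widehat{A}$ and $\widehat{B}$, so that $M_{ij}=\langle u_i,v_j\rangle$. The goal is to rescale these two families of vectors simultaneously, via a single invertible linear map, without destroying the inner products.

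Next I would verify the hypotheses of Corollary~\ref{cor:inner_rescaling} with $n=k$. The sets $U$ and $V$ are finite, hence bounded, and each spans $\RR^k$: since $\rank(\widehat{A}^T)\geq \rank(\widehat{A}^T\widehat{B})=\rank(M)=k$ and $\widehat{A}^T\in\RR^{p\times k}$, the matrix $\widehat{A}$ has rank exactly $k$, so its $p$ columns span $\RR^k$; the same argument gives $\rank(\widehat{B})=k$ and hence that $V$ spans $\RR^k$. Moreover $\Delta:=\max_{u\in U,\,v\in V}|\langle u,v\rangle| = \max_{i,j}|M_{ij}| = \|M\|_\infty$. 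Corollary~\ref{cor:inner_rescaling} then yields a linear operator $L:\RR^k\to\RR^k$ with $\max_i\|Lu_i\|_2\leq k^{1/4}\sqrt{\|M\|_\infty}$ and $\max_j\|(L^{-1})^T v_j\|_2\leq k^{1/4}\sqrt{\|M\|_\infty}$. Setting $A:=L\widehat{A}$ and $B:=(L^{-1})^T\widehat{B}$, the columns of $A$ and $B$ are exactly the vectors $Lu_i$ and $(L^{-1})^T v_j$, so the norm bound holds, and $A^T B = \widehat{A}^T L^T (L^{-1})^T \widehat{B} = \widehat{A}^T\widehat{B} = M$, which finishes the proof.

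There is essentially no obstacle here: all the real work is in Corollary~\ref{cor:inner_rescaling} (and ultimately in John's theorem). The only point requiring a moment's care is the observation that in \emph{any} rank-$k$ factorization $M=\widehat{A}^T\widehat{B}$ both factors automatically have full row rank $k$, so that the ``each spans $\RR^n$'' hypothesis of Corollary~\ref{cor:inner_rescaling} is satisfied; everything else is bookkeeping.
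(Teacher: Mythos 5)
Your proof is correct and follows the same approach as the paper: take any rank factorization $M=\widehat A^T\widehat B$, apply Corollary~\ref{cor:inner_rescaling} to the column sets, and conjugate by $L$. You add a careful verification of the spanning hypothesis that the paper leaves implicit, but the argument is the same.
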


\begin{proof}
Start with any rank factorization $M=U^TV$ and apply Corollary \ref{cor:inner_rescaling} to the columns of $U$ and $V$. Then $A=LU$ and $B=(L^{-1})^TV$ have the intended properties.
\end{proof}

As mentioned in the introduction, factorizations where the factors
have small norm have been studied in different contexts, particularly
in Banach space theory. For example, \cite[Lemma
4.2]{LinialMendelsonSchechtmanShraibman} is closely related to
Corollary~\ref{cor:usual_rank_scalability}. The scalability of psd
factorizations also follows readily.

\begin{corollary} \label{cor:eigbound}
If $M\in \RR_+^{p\times q}$ has psd rank $k$, then there exist $A_1, \ldots, A_p, B_1, \ldots, B_q \in \PSD^k$ such that $M_{i,j}=\left<A_i,B_j\right>$ and the largest eigenvalue
of $A_i$ and $B_j$ is bounded above by $\sqrt{k \|M\|_{\infty}}$.
\end{corollary}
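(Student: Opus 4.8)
The plan is to follow the template of Corollary~\ref{cor:usual_rank_scalability}: fix any psd factorization $M_{ij}=\langle A_i,B_j\rangle$ of size $k$, and look for an invertible $L:\RR^k\to\RR^k$ so that replacing the factors by $A_i':=LA_iL^T$ and $B_j':=(L^{-1})^TB_jL^{-1}$---which are again psd of size $k$ and satisfy $\langle A_i',B_j'\rangle=\trace(LA_iB_jL^{-1})=\trace(A_iB_j)=M_{ij}$ (using $L^T(L^{-1})^T=I_k$)---brings the top eigenvalues under control. The map $L$ will be produced by Corollary~\ref{cor:inner_rescaling} applied in $\RR^k$ to the sets $U:=\bigcup_{i=1}^p A_i^{1/2}(B^k)$ and $V:=\bigcup_{j=1}^q B_j^{1/2}(B^k)$, where $A_i^{1/2}$ is the psd square root and $B^k$ the unit Euclidean ball. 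These sets are bounded; and they span $\RR^k$ provided $\sum_iA_i$ and $\sum_jB_j$ are nonsingular, which may be assumed without loss of generality, since otherwise one conjugates the whole factorization to a block form with a common zero block and obtains a factorization of size $<k$, contradicting $\rankpsd M=k$.

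The crux is the estimate $\Delta:=\max_{u\in U,\,v\in V}|\langle u,v\rangle|\le\sqrt{\|M\|_\infty}$. For $u=A_i^{1/2}e$ and $v=B_j^{1/2}e'$ with $\|e\|_2,\|e'\|_2\le1$ one has $|\langle u,v\rangle|\le\|A_i^{1/2}B_j^{1/2}\|_{\mathrm{op}}$, and $\|A_i^{1/2}B_j^{1/2}\|_{\mathrm{op}}^2=\lambda_{\max}(A_i^{1/2}B_jA_i^{1/2})\le\trace(A_i^{1/2}B_jA_i^{1/2})=\trace(A_iB_j)=M_{ij}\le\|M\|_\infty$, where $\lambda_{\max}\le\trace$ holds because $A_i^{1/2}B_jA_i^{1/2}\succeq0$.

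Corollary~\ref{cor:inner_rescaling} with $n=k$ then gives an $L$ with $\max_{u\in U}\|Lu\|_2\le k^{1/4}\sqrt{\Delta}\le(k\|M\|_\infty)^{1/4}$ and $\max_{v\in V}\|(L^{-1})^Tv\|_2\le(k\|M\|_\infty)^{1/4}$. Since $\max_{u\in U}\|Lu\|_2=\max_i\|LA_i^{1/2}\|_{\mathrm{op}}$ and $\lambda_{\max}(A_i')=\lambda_{\max}(LA_iL^T)=\|LA_i^{1/2}\|_{\mathrm{op}}^2$, this forces $\lambda_{\max}(A_i')\le\sqrt{k\|M\|_\infty}$, and by the same reasoning $\lambda_{\max}(B_j')=\|(L^{-1})^TB_j^{1/2}\|_{\mathrm{op}}^2\le\sqrt{k\|M\|_\infty}$, which is the corollary.

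The subtle point---and the main obstacle---is the choice of $U$ and $V$. The naive route via Remark~\ref{rem:rank1projection} would write $A_i=\sum_{s=1}^kv_{i,s}v_{i,s}^T$, rescale the vectors $v_{i,s}$ and $w_{j,r}$ using Corollary~\ref{cor:inner_rescaling}, and reassemble; but then one can only bound $\lambda_{\max}(A_i')\le\trace(A_i')=\sum_s\|Lv_{i,s}\|_2^2$, which loses a factor of $k$ and yields the weaker bound $k^{3/2}\sqrt{\|M\|_\infty}$. Packaging all of $A_i$ into a single ellipsoid $A_i^{1/2}(B^k)$ is precisely what lets the operator-norm bound on $LA_i^{1/2}$ pass to $\lambda_{\max}(LA_iL^T)$ with no extra loss, while the psd trace inequality $\lambda_{\max}(A_iB_j)\le\trace(A_iB_j)=M_{ij}$ is what pins $\Delta$ down to $\sqrt{\|M\|_\infty}$.
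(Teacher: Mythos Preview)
Your proof is correct and follows essentially the same route as the paper's. The sets you use, $U=\bigcup_i A_i^{1/2}(B^k)$ and $V=\bigcup_j B_j^{1/2}(B^k)$, coincide with the paper's sets $\{u:A_i-uu^T\succeq 0\text{ for some }i\}$ and $\{v:B_j-vv^T\succeq 0\text{ for some }j\}$, since $uu^T\preceq A_i$ is equivalent to $u=A_i^{1/2}e$ for some $\|e\|_2\le 1$; the bound on $\Delta$ and the eigenvalue estimate are then the same computations expressed through operator norms rather than the Loewner order.
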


\begin{proof}
Start with a factorization $A'_1, \ldots, A'_p, B'_1, \ldots, B'_q \in \PSD^k$, and let
$$U=\{ u \in \RR^k \ : \ A'_i - uu^T \succeq  0 \textrm{ for some } i\},$$
while
$$V=\{ v \in \RR^k \ : \ B'_j - vv^T \succeq 0 \textrm{ for some } j\}.$$
For $u \in U$ and $v \in V$, $\left< u,v \right> \leq \sqrt{\|M\|_{\infty}}$, since 
$$\left< u,v \right>^2 =  \left< uu^T,vv^T \right> \leq \left<A_i,B_j\right> = M_{i,j} \leq \|M\|_{\infty}.$$
Applying Corollary \ref{cor:inner_rescaling} to $U$ and $V$ we get a linear operator $L$ such that $(L^{-1})^Tv$
and $Lu$ have norm at most $\sqrt[4]{k \|M\|_{\infty}}$ for all $u \in U$ and $v \in V$. Let ${A_i}=LA'_iL^T$ and  $B_j=(L^{-1})^TB'_jL^{-1}$ then
$$LU=\{ u \in \RR^k \ : \ A_i - uu^T \succeq  0 \textrm{ for some } i\},$$
and
$$(L^{-1})^TV=\{ v \in \RR^k \ : \ B_j - vv^T \succeq 0 \textrm{ for some } j\}.$$
But note that if $\lambda$ is an eigenvalue of a psd matrix $A$ and $x$ a corresponding eigenvector with $\|x\|^2_2 = \lambda$, $A-xx^T \succeq 0$. This can be seen by considering an
eigenvector decomposition $A = \sum \lambda_i u_i u_i^T$ where $u_i=x/\|x\|_2$. In particular this implies that all eigenvalues of matrices $A_i$ can be seen as the square
of the norm of a vector in $LU$, and similarly with the $B_j$ and $LV$. Hence the maximum eigenvalues in each case are at most $\sqrt{k \|M\|_{\infty}}$ as intended.
\end{proof}

Note that if we are only interested in getting a bound of
$k^{5/4}\sqrt{\|M\|_{\infty}}$ (which is already enough for the
application in \cite{briet2013}) we can derive it directly from
Corollary~\ref{cor:usual_rank_scalability}, together with
Remark~\ref{rem:rank1projection}, illustrating that properties valid
for rank one factors can sometimes be extended to general
factorizations.

\begin{remark}
It is worth noting that while Lemma~\ref{lem:simplescaling2} and
Corollary~\ref{cor:eigbound} look similar, the bounds they provide are
in general not comparable. In general, if $M$ is dense, $\|M\|_{1,1}$
is expected to be much larger than $\|M\|_{\infty}$ so if the psd rank
is low compared to the number of rows of $M$, we expect the bound from
Corollary~\ref{cor:eigbound} to be significantly smaller. For sparse
matrices, the same is not true: when applied to the identity matrix
for example, Corollary~\ref{cor:eigbound} can only guarantee factors
of largest eigenvalue at most $\sqrt{k}$, hence the trace is at most
$k\sqrt{k}$, a worse guarantee than that obtained directly from
Lemma~\ref{lem:simplescaling2}.
\end{remark}

%%%%%%%%%%%%%%%%%%%%%%%%%%%%%%%%%%%%%%%%%%%%%%%%%%%%%%%%%%%%%%%%%%%%%%%%%%
\section{Space of factorizations}
\label{sec:space of factorizations}
In this section, we fix a nonnegative matrix $M$ and consider the set of all valid psd factorizations of $M$ as a topological space.  In the special case where $\rank (M) = \binom{\rankpsd (M) + 1}{2}$, we show in Proposition~\ref{prop:space of factorizations} and Corollary~\ref{cor:space of factorizations} that this topological space is closely related to the space of all linear images of the psd cone that nest between two polyhedral cones coming from $M$.  An extension of this result to general $M$ is not possible, as seen in Example~\ref{ex:hexagon space}.  In Examples~\ref{ex:diff psd factorizations} and \ref{ex:full rank psd factors}, we use this machinery to construct psd factorizations from the linear embedding of the psd cone.  Finally in Proposition~\ref{prop:32 connected}, we show that for rank three matrices with psd rank two, the space of psd factorizations is connected.  This contrasts with the nonnegative rank case where it is known that the space of nonnegative factorizations can be disconnected for rank three matrices with nonnegative rank three \cite{straten2003sandwich}.

For this section, let $M \in \RR_+^{p \times q}$ be a nonnegative matrix with psd rank $k$.  As before, we define a \emph{psd factorization} to be a set of matrices $\left( A_1,\ldots,A_p, B_1,\ldots,B_q \right) \in \left( \CalS^k \right)^{p+q}$ such that each of the component matrices is psd and $M_{ij} = \langle A_i, B_j \rangle$ for each entry in $M$.  We define the set of all such psd factorizations to be the \emph{space of psd factorizations} associated to $M$ and denote it by $\mathcal{S}\mathcal{F}(M)$.  Note that this definition only considers matrices whose size is equal to the psd rank of $M$.  
%This set lives in the product space $\left( \CalS^k \right)^{p+q}$ which is isomorphic to Euclidean space of dimension $\left( p+q \right) \binom{k+1}{2}$.  
%In this section, we build and apply a geometric interpretation of $\mathcal{S}\mathcal{F}(M)$ for special cases of $M$.

As a warm-up, it is straightforward to see that $\mathcal{S}\mathcal{F}(M)$ is closed and infinite.  To see that $\mathcal{S}\mathcal{F}(M)$ is infinite, simply note that for any psd factorization $\left( A_1,\ldots,B_q \right)$ and 
%any invertible $k \times k$ matrix $L$, 
any matrix $L \in GL(k)$ (the group of invertible $k \times k$ matrices),
the matrices $\left( L^T A_1 L,\ldots,L^{-1} B_q L^{-T} \right)$ also form a psd factorization of $M$.  We refer to the set of all such psd factorizations as the \emph{orbit} of $\left( A_1,\ldots,B_q \right)$ in $\mathcal{S}\mathcal{F}(M)$.  In some cases the entire space of psd factorizations is equivalent to a single orbit.

% \todo{Are orbits connected?  GL(n) has two connected components, but the map from GL(n) to $\SF(M)$ is not injective?}

\begin{example}\label{ex:orbit of factorization}
Let $M$ be the $3 \times 3$ derangement matrix from Example~\ref{ex:3x3derangement} (i.e. $M_{ii} = 0$ and $M_{ij} = 1$ for $i \neq j $).  
% $\left(\begin{array}{ccc} 0 & 1 & 1 \\ 1 & 0 & 1 \\ 1 & 1 & 0 \end{array}\right)$ 
This matrix has usual rank three and psd rank two as shown by the factorization

$$ \left[ \begin{array}{cc} 1 & 0 \\ 0 & 0 \end{array} \right], \left[ \begin{array}{cc} 0 & 0 \\ 0 & 1 \end{array} \right], \left[ \begin{array}{cc} 1 & -1 \\ -1 & 1 \end{array} \right], \left[ \begin{array}{cc} 0 & 0 \\ 0 & 1 \end{array} \right],\left[ \begin{array}{cc} 1 & 0 \\ 0 & 0 \end{array} \right], \left[ \begin{array}{cc} 1 & 1 \\ 1 & 1 \end{array} \right]. $$

Let $\XX$ denote an arbitrary psd factorization of $M$.  The zero pattern of $M$ implies that the matrices composing $\XX$ must all be rank one.  Now it is straightforward to see that there exists an invertible matrix such that conjugation by this matrix will send $\XX$ to the explicit factorization above.  Hence, $\SF(M)$ is composed of a single orbit.

\end{example}

The next proposition gives a geometric picture of the space of factorizations in the special case where $\rankpsd (M) = k$ and $\rank (M) = \binom{k+1}{2}$.  Before proceeding, we present a homogenized version of the geometric interpretation of psd rank that was given in Section~\ref{subsec:geo interpretation}, which will be easier to work with in this section.  First, we homogenize Definition~\ref{def:slack}:
\begin{definition}\label{def:cone slack}
Let $P$ and $Q$ be polyhedral cones with $P\subset Q \subset \RR^n$. Let $x_1,\dots,x_v$ be the extreme rays of $P$ and let $a_j^Tx \geq 0$, $(j=1,\dots,f)$ be the inequalities defining $Q$. Then the slack matrix of the pair $P,Q$, denoted $S_{P,Q}$, is the nonnegative $v\times f$ matrix whose $(i,j)$th entry is $a_j^T x_i$.
\end{definition}
By taking a rank factorization, it is easy to see that any nonnegative matrix $M$ can be viewed as $S_{P,Q}$ for some polyhedral cones $P$ and $Q$ whose dimension is equal to $\rank (M)$.
Furthermore, Theorem~\ref{thm:psdlift} extends straightforwardly to this conic setting: $\rankpsd \left(S_{P,Q}\right)$ is the smallest integer $k$ for which there exists a subspace $\cL$ and a linear map $\pi$ such that $P \subset \pi(\S^k_+ \cap \cL) \subset Q$.

In the special case where the cones $P$ and $Q$ come from a matrix $M$ with $\rankpsd (M) = k$ and $\rank (M) = \binom{k+1}{2}$, we can count dimensions to see that the map $\pi$ is invertible and the subspace $\cL$ is all of $\CalS^k$.  If we define $\Delta_k(P,Q)$ to be the space of all linear maps $\pi : \S^k \rightarrow \RR^{\binom{k+1}{2}}$ such that $P \subset \pi(\PSD^k) \subset Q$, then $\Delta_k(P,Q)$ is nonempty with $M$, $P$, and $Q$ as above.  We can actually say much more about $\Delta_k(P,Q)$ in this special case.

%For matrices with rank equal to $\binom{k+1}{2}$, we know that psd rank is at least $k$ (Proposition~\ref{prop:ineqsranks}) and there exist cones $P \subset Q \subset \RR^{\binom{k+1}{2}}$ such that $M$ is the slack matrix of $P$ with respect to $Q$ (see Section~\ref{subsec:geo interpretation}).  It is these cones that form the foundation of our geometric picture.  

%Define $\Delta_k(P,Q)$ to be the space of all linear maps $\pi : \S^k \rightarrow \RR^{\binom{k+1}{2}}$ such that $P \subset \pi(\PSD^k) \subset Q$.  By dimension-counting, we see that every $\pi$ in $\Delta_k(P,Q)$ must be invertible (this is the key property that differentiates this special case from the general case!).  We now show that $\Delta_k(P,Q)$ is homeomorphic to $\SF(M)$.

\begin{proposition}\label{prop:space of factorizations}
Let $M \in \RR_+^{p \times q}$ with $\rank (M) = \binom{k+1}{2}$ and $\rankpsd (M) = k$.  Fix a rank factorization of $M = UV$ where $u_i$ is the $i$th row of $U$ and $v_j$ is the $j$th column of $V$.  Let $P = \cone(u_1,\ldots,u_p)$ and $Q = \left\{ x \; | \; v_j^T x \geq 0 \textup{ for all } j \right\}$ be the cones generated by this rank factorization so that $M = S_{P,Q}$.  Then $\SF(M)$ is homeomorphic to $\Delta_k(P,Q)$.
\end{proposition}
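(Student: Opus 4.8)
The plan is to exhibit an explicit homeomorphism by identifying psd factorizations of $M$ with rank factorizations of $M$, and then reading off the linear map $\pi$ as (essentially) the invertible matrix relating such a rank factorization to the reference factorization $M = UV$. The key first observation is that a size-$k$ psd factorization $(A_1,\dots,A_p,B_1,\dots,B_q)$, rewritten through the $\vecm$ map from the proof of Proposition~\ref{prop:ineqsranks}, is a factorization $M = AB$ with $A \in \RR^{p \times \binom{k+1}{2}}$ having rows $\vecm(A_i)$ and $B \in \RR^{\binom{k+1}{2} \times q}$ having columns $\vecm(B_j)$; since $\rank M = \binom{k+1}{2}$ equals the inner dimension, both $A$ and $B$ must have rank $\binom{k+1}{2}$. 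In particular the $A_i$ span $\S^k$, the $B_j$ span $\S^k$, and every rank factorization of $M$ has the form $(UR, R^{-1}V)$ for a unique $R \in GL_{\binom{k+1}{2}}$.

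Next I would define the forward map $\Phi : \SF(M) \to \Delta_k(P,Q)$. Given a psd factorization, let $\pi : \S^k \to \RR^{\binom{k+1}{2}}$ be the unique linear map with $\pi(A_i) = u_i$ for all $i$. This is well defined: uniqueness is clear since the $A_i$ span $\S^k$, and existence holds because $\sum_i c_i A_i = 0$ forces $\sum_i c_i M_{ij} = \langle \sum_i c_i A_i, B_j \rangle = 0$ for all $j$, hence $c^T U V = 0$, hence $\sum_i c_i u_i = 0$ as $V$ has full row rank; thus every linear relation among the $A_i$ is also satisfied by the $u_i$. Testing the adjoint against the spanning set $\{A_i\}$ gives $\langle \pi^* v_j, A_i \rangle = v_j^T \pi(A_i) = v_j^T u_i = M_{ij} = \langle B_j, A_i \rangle$, so $\pi^*(v_j) = B_j$. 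Consequently $P = \cone(u_i) = \cone(\pi A_i) \subseteq \pi(\PSD^k)$ (using $A_i \succeq 0$ and convexity of $\pi(\PSD^k)$), while for $X \succeq 0$ we get $v_j^T \pi(X) = \langle X, \pi^* v_j \rangle = \langle X, B_j \rangle \geq 0$ for all $j$, i.e. $\pi(\PSD^k) \subseteq Q$. Hence $\pi \in \Delta_k(P,Q)$, and $\pi$ is invertible by a dimension count (its image contains $\text{span}(u_i) = \RR^{\binom{k+1}{2}}$ and $\dim \S^k = \binom{k+1}{2}$).

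For the inverse map $\Psi : \Delta_k(P,Q) \to \SF(M)$: given $\pi \in \Delta_k(P,Q)$, first note that $P$ is full-dimensional in $\RR^{\binom{k+1}{2}}$, since $U$ has full column rank so the $u_i$ span the space and the conical hull of a spanning set has nonempty interior; then $P \subseteq \pi(\PSD^k)$ forces $\pi$ surjective, hence invertible. Set $A_i := \pi^{-1}(u_i)$ and $B_j := \pi^*(v_j)$. Then $A_i \succeq 0$, because $u_i \in P \subseteq \pi(\PSD^k)$ and $\pi$ is injective; $B_j \succeq 0$ by self-duality of $\PSD^k$ (Section~\ref{sec:definitions}), since $\langle \pi^* v_j, X \rangle = v_j^T \pi(X) \geq 0$ for every $X \succeq 0$ (as $\pi(X) \in Q$); and $\langle A_i, B_j \rangle = \langle \pi^{-1} u_i, \pi^* v_j \rangle = \langle u_i, v_j \rangle = M_{ij}$. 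A direct check shows $\Phi$ and $\Psi$ are mutually inverse. Both are continuous: $\Phi$ recovers $R$ (equivalently $\pi$) from the factorization by a fixed linear formula such as $R = (U^T U)^{-1} U^T A$, followed by the inversion map on $GL_{\binom{k+1}{2}}$, and $\Psi$ simply reverses this; matrix inversion is continuous on the general linear group, and every point of $\SF(M)$ and of $\Delta_k(P,Q)$ lands there. Therefore $\Phi$ is a homeomorphism.

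The main obstacle, and the only place the hypotheses $\rankpsd(M) = k$ and $\rank(M) = \binom{k+1}{2}$ are genuinely used, is the pair of dimension counts: (i) that the factors of a size-$k$ psd factorization of such an $M$ must span all of $\S^k$, which is what makes $\pi$ well defined and uniquely determined; and (ii) that every linear map nesting $\PSD^k$ between $P$ and $Q$ is automatically invertible, which requires $P$ to be full-dimensional. Once these are settled, the remainder is routine bookkeeping with adjoints together with the self-duality of $\PSD^k$.
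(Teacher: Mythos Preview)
Your proposal is correct and follows essentially the same approach as the paper: define $\pi$ by $\pi(A_i)=u_i$ (well-defined because the $A_i$ span $\S^k$ and the relations among the $A_i$ are also satisfied by the $u_i$ via full row rank of $V$), and invert by $A_i=\pi^{-1}(u_i)$, $B_j=\pi^*(v_j)$, using self-duality of $\PSD^k$. You are somewhat more explicit than the paper in verifying that every $\pi\in\Delta_k(P,Q)$ is automatically invertible and in spelling out continuity via the formula $R=(U^TU)^{-1}U^TA$, but these are elaborations of the same argument rather than a different route.
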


\begin{proof}
Suppose $\left( A_1,\ldots,B_q \right)$ is a psd factorization of $M$.  The set $\left(A_1,\ldots, A_p\right)$ spans $\S^k$ (else we could find a lower dimensional rank factorization of $M$), so we can define a linear map $\pi$ by making $\pi(A_i) = u_i$.  This map is well-defined since if $\sum_i{\alpha_i A_i}$ and $\sum_j{\beta_j A_j}$ are two representations of the same matrix in $\S^k$, then we have that $\left( \sum_i{\alpha_i u_i} - \sum_j{\beta_j u_j}\right)^T V = 0$.  Since $V$ has full row rank, this implies that 
$\sum_i{\alpha_i u_i} = \sum_j{\beta_j u_j}$.  By the definition of $\pi$, it is immediate that $P \subset \pi(\PSD^k)$.  Since $\pi$ has the property that $\langle \pi(L), v_j \rangle = \langle L, B_j \rangle$ for each $j$, we also have that $\pi(\PSD^k) \subset Q$.  Thus, we have defined a map from 
$\SF(M)$ to $\Delta_k(P,Q)$.

Next, suppose that we have $\pi \in \Delta_k(P,Q)$.  Define $A_i = \pi^{-1}(u_i)$ and $B_j = \pi^*(v_j)$ where $\pi^*$ is the adjoint map.  Then $A_i \in \PSD^k$ and $B_j \in (\PSD^k)^* = \PSD^k$ and these matrices form a psd factorization of $M$.  This map is the inverse of the one defined above and both of the maps are continuous.  Hence, the spaces are homeomorphic.
\end{proof} 

Both of the spaces in the previous proposition permit a natural action by $GL(k)$.  The action on $\SF(M)$ was mentioned above when we discussed the orbits of $\SF(M)$.  The action on $\Delta_k(P,Q)$ takes the form $g \cdot \pi(L) = \pi(g L g^T)$, i.e. we compose the map $\pi$ with an automorphism of the psd cone.  
%Under this action, the quotient space $\Delta_k(P,Q) / GL(k)$ has a natural interpretation as all linear images of $\PSD^k$ that lie between $P$ and $Q$.  
The homeomorphism in the previous proposition respects these group actions so we can descend to the quotient to see the following.

\begin{corollary}\label{cor:space of factorizations}
Under the same assumptions as the previous proposition, the spaces $\SF(M) / GL(k)$ and $\Delta_k(P,Q) / GL(k)$ are homeomorphic.  Furthermore, $\Delta_k(P,Q) / GL(k)$ is homeomorphic to the space of all linear images $C$ of $\PSD^k$ such that $P \subset C \subset Q$.
\end{corollary}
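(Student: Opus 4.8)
The plan is to promote the homeomorphism $\Phi\colon \SF(M)\to\Delta_k(P,Q)$ built in the proof of Proposition~\ref{prop:space of factorizations} to a homeomorphism of quotients, by checking it is compatible with the two $GL(k)$-actions, and then to recognize $\Delta_k(P,Q)/GL(k)$ as the set of nested linear images of $\PSD^k$ via the map $\pi\mapsto\pi(\PSD^k)$, whose fibres I claim are exactly the $GL(k)$-orbits.

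First the equivariance. If $\mathbf{x}=(A_1,\dots,B_q)$ is a psd factorization with $\Phi(\mathbf{x})=\pi$ (so $\pi(A_i)=u_i$) and $L\in GL(k)$, then $L\cdot\mathbf{x}=(L^TA_1L,\dots,L^{-1}B_qL^{-T})$ is sent by $\Phi$ to the unique linear map $\pi'$ with $\pi'(L^TA_iL)=u_i$ for all $i$; since the $A_i$ span $\S^k$ this forces $\pi'(X)=\pi(L^{-T}XL^{-1})$ for every $X\in\S^k$, i.e. $\pi'=L^{-T}\cdot\pi$ for the action $g\cdot\pi\colon X\mapsto\pi(gXg^T)$. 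Because $L\mapsto L^{-T}$ is a group automorphism of $GL(k)$, this harmless twist does not affect the orbit structure: $\Phi$ carries $GL(k)$-orbits bijectively onto $GL(k)$-orbits. Since the orbit maps $\SF(M)\to\SF(M)/GL(k)$ and $\Delta_k(P,Q)\to\Delta_k(P,Q)/GL(k)$ are open (the quotient map for a group action is always open) and $\Phi$ is a homeomorphism, the induced continuous bijection $\SF(M)/GL(k)\to\Delta_k(P,Q)/GL(k)$ is a homeomorphism.

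For the second assertion, consider $\Psi\colon\Delta_k(P,Q)\to\{\text{linear images of }\PSD^k\}$, $\pi\mapsto\pi(\PSD^k)$, and equip the target with the quotient topology, so that it suffices to show the fibres of $\Psi$ are precisely the $GL(k)$-orbits. Every $\pi\in\Delta_k(P,Q)$ is invertible: the rows $u_i$ of the full-rank factor $U$ span $\RR^{\binom{k+1}{2}}$, so the convex cone $P$ has nonempty interior, hence so does $\pi(\PSD^k)\supseteq P$, forcing $\pi$ surjective and thus, since $\dim\S^k=\binom{k+1}{2}=\rank(M)$, invertible. Now if $\pi(\PSD^k)=\pi'(\PSD^k)$ then $\alpha:=\pi^{-1}\circ\pi'$ is a linear automorphism of $\S^k$ mapping $\PSD^k$ onto itself; by the classical description of the automorphism group of the psd cone, $\alpha$ has the form $X\mapsto gXg^T$ for some $g\in GL(k)$, so $\pi'=g\cdot\pi$. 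Conversely $g\cdot\pi$ has the same image as $\pi$ because $g\,\PSD^k g^T=\PSD^k$. Hence $\Psi$ descends to a homeomorphism on $\Delta_k(P,Q)/GL(k)$, which together with the first paragraph yields the corollary.

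The one genuinely nontrivial ingredient is the classification $\operatorname{Aut}(\PSD^k)=\{X\mapsto gXg^T:\ g\in GL(k)\}$, since it is exactly what converts ``having the same image'' into ``lying in the same $GL(k)$-orbit''. Everything else — invertibility of the maps $\pi$, openness of the orbit maps, and continuity of $\Phi$ and $\Psi$ — is routine, and the only bookkeeping point is the $L\mapsto L^{-T}$ mismatch between the two actions, which is immaterial for passing to orbit spaces.
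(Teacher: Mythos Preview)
Your argument is correct and follows the same route as the paper: descend the homeomorphism of Proposition~\ref{prop:space of factorizations} to the quotients, then identify the orbit space with nested images via $[\pi]\mapsto\pi(\PSD^k)$. You are in fact more careful than the paper, which asserts the homeomorphism ``respects these group actions'' and that well-definedness is ``straightforward''; you correctly spot the $L\mapsto L^{-T}$ twist and make explicit the one nontrivial input, the classification $\operatorname{Aut}(\PSD^k)=\{X\mapsto gXg^T\}$, which is exactly what is needed to show the fibres of $\pi\mapsto\pi(\PSD^k)$ are the $GL(k)$-orbits.
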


\begin{proof}
The first statement is shown by descending to the quotient as discussed prior to the corollary.  The second homeomorphism is just given by $[\pi] \mapsto \pi(\PSD^k)$.  It is straightforward to check that this map is a well-defined homeomorphism.
\end{proof}

%The space $\Delta_k(P,Q) / GL(k)$ has a natural interpretation as all linear images of $\PSD^k$ that lie between $P$ and $Q$.  To see this, let $\pi_1, \pi_2 \in \Delta_k(P,Q)$ such that $\pi_1(\PSD^k) = \pi_2(\PSD^k)$.  Then $\pi_1^{-1} \circ \pi_2$ is an automorphism of the psd cone and $\pi_1$ and $\pi_2$ must lie in the same equivalence class in $\Delta_k(P,Q) / GL(k)$.  

The next example shows that the conclusion of Corollary~\ref{cor:space of factorizations} cannot hold for general $M$.

\begin{example} \label{ex:hexagon space}
Let $M$ be the slack matrix of the regular hexagon, i.e. $M$ is the $6 \times 6$ circulant matrix defined by the vector $(0,1,2,2,1,0)$.  It was shown in \cite{gouveia2013polytopes} that $M$ has rank three, psd rank four, and at least two distinct factorization orbits (since there exists a factorization consisting entirely of rank one matrices and another factorization using both rank one and rank two matrices).  Since this matrix is a slack matrix of a polytope, however, the cones $P$ and $Q$ must be equal and the only image nested between them must be $P$ itself.  Hence, there cannot exist a bijection between factorization orbits and images nested between $P$ and $Q$.
\end{example}

In the next examples, we apply our machinery to matrices with rank three and psd rank two.  
%Before we proceed, however, we should reiterate that this geometric correspondence only holds in the special case where the rank of $M$ is $\binom{k+1}{2}$ and the psd rank of $M$ is $k$.  In the general case, we cannot have an injective mapping between factorization orbits and linear images nested between $P$ and $Q$.  To see this, consider the slack matrix of the regular hexagon.  As shown in \cite{gouveia2013polytopes}, this matrix has rank three, psd rank four, and at least two distinct factorization orbits (since there exists a factorization consisting entirely of rank one matrices and another factorization using both rank one and rank two matrices).  However, since this matrix is a slack matrix of a polytope, the cones $P$ and $Q$ must be equal and the only image nested between them must be $P$ itself.

\begin{remark}
In light of Corollary~\ref{cor:space of factorizations}, we can gain new insight on Example~\ref{ex:orbit of factorization}.  By taking the trivial rank factorization of the $3 \times 3$ derangement matrix, we obtain the cones 
$$ P = \cone\left( (0,1,1),(1,0,1),(1,1,0) \right) \subset \RR^3_+ = Q . $$  
Dehomogenizing these cones gives us the two triangles seen in Figure~\ref{fig:derangement space}.  In this dehomogenized picture, linear images of the psd cone correspond to ellipses and it is straightforward to see that there is a unique ellipse that fits between the two triangles.  Hence, $\Delta_2(P,Q) / GL(2)$ consists of a single point and by the corollary, the space of psd factorizations is composed of a single orbit.
\end{remark}

\begin{figure}[ht]
  \includegraphics[width=4.5cm]{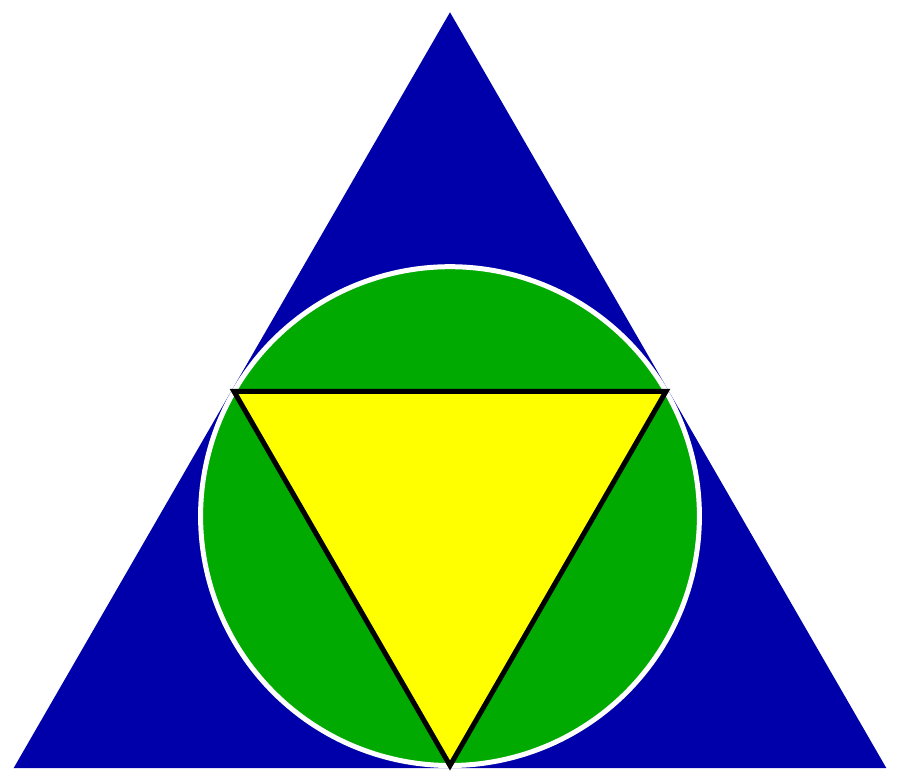}
  \caption{A space of psd factorizations consisting of a single orbit: The yellow and blue triangles correspond to the dehomogenized cones coming from a rank factorization of the $3 \times 3$ derangement matrix.  The green circle is the unique ellipse that can be nested between the two triangles. }
  \label{fig:derangement space}
\end{figure}

We now show how to apply Proposition~\ref{prop:space of factorizations} to find different psd factorizations of a matrix.

\begin{example}\label{ex:diff psd factorizations}
In this example, we consider the following matrix $M$ of rank three (shown along with a rank factorization):
\[ \left( \begin{array}{cccc} 3 & 3 & 1 & 1 \\ 1 & 3 & 3 & 1 \\ 1 & 1 & 3 & 3 \\ 3 & 1 & 1 & 3 \end{array} \right) = 
\left( \begin{array}{ccc} 1 & 1 & 1 \\ 1 & 1 & -1 \\ 1 & -1 & -1 \\ 1 & -1 & 1 \end{array} \right)
\left( \begin{array}{cccc} 2 & 2 & 2 & 2 \\ 0 & 1 & 0 & -1 \\ 1 & 0 & -1 & 0 \end{array} \right) . \]

By forming the cones $P$ and $Q$ corresponding to this rank factorization and then dehomogenizing through the plane $\left\{ (1,x_2,x_3) \right\}$, we see that $P$ corresponds to the square centered at the origin of side length two and that $Q$ corresponds to the same square scaled by a factor of two (see Figure~\ref{fig:diff psd factorizations}).  Now any linear image of $\PSD^2$ corresponds to an ellipse in this dehomogenized picture.  So to get a psd factorization of $M$, we just need to pick an ellipse, figure out a linear image of $\PSD^2$ that corresponds to it, and apply the homeomorphism discussed in Proposition~\ref{prop:space of factorizations}.

For the circle centered at the origin with radius $\sqrt{2}$, we get the following (where $\alpha = \frac{1}{\sqrt{2}}$):
\[ \left( \begin{array}{cc} 1+\alpha & \alpha \\ \alpha & 1-\alpha \end{array} \right),
\left( \begin{array}{cc} 1-\alpha & \alpha \\ \alpha & 1+\alpha \end{array} \right),
\left( \begin{array}{cc} 1-\alpha & -\alpha \\ -\alpha & 1+\alpha \end{array} \right),
\left( \begin{array}{cc} 1+\alpha & -\alpha \\ -\alpha & 1-\alpha \end{array} \right), \]

\[ \left( \begin{array}{cc} 1+\alpha & 0 \\ 0 & 1-\alpha \end{array} \right),
\left( \begin{array}{cc} 1 & \alpha \\ \alpha & 1 \end{array} \right),
\left( \begin{array}{cc} 1-\alpha & 0 \\ 0 & 1+\alpha \end{array} \right),
\left( \begin{array}{cc} 1 & -\alpha \\ -\alpha & 1 \end{array} \right) . \]

For the ellipse centered at the origin with horizontal axis of length four and vertical axis of length three, we get the factorization:
\[ \left( \begin{array}{cc} 5/3 & 1/2 \\ 1/2 & 1/3 \end{array} \right),
\left( \begin{array}{cc} 1/3 & 1/2 \\ 1/2 & 5/3 \end{array} \right),
\left( \begin{array}{cc} 1/3 & -1/2 \\ -1/2 & 5/3 \end{array} \right),
\left( \begin{array}{cc} 5/3 & -1/2 \\ -1/2 & 1/3 \end{array} \right), \]

\[ \left( \begin{array}{cc} 7/4 & 0 \\ 0 & 1/4 \end{array} \right),
\left( \begin{array}{cc} 1 & 1 \\ 1 & 1 \end{array} \right),
\left( \begin{array}{cc} 1/4 & 0 \\ 0 & 7/4 \end{array} \right),
\left( \begin{array}{cc} 1 & -1 \\ -1 & 1 \end{array} \right) . \]

It is interesting to note how the ranks of the factors change depending on whether the ellipse contacts the vertices of $P$ or the facets of $Q$.  For example, in the second factorization, the matrices corresponding to the columns are rank one exactly when the corresponding facet of the outer square is tight to the ellipse.  Of course, this is not a coincidence, but due to how we construct the psd factorization once we know the linear embedding of the psd cone. 
\end{example}

\begin{figure}[ht]
  \includegraphics[width=4.5cm]{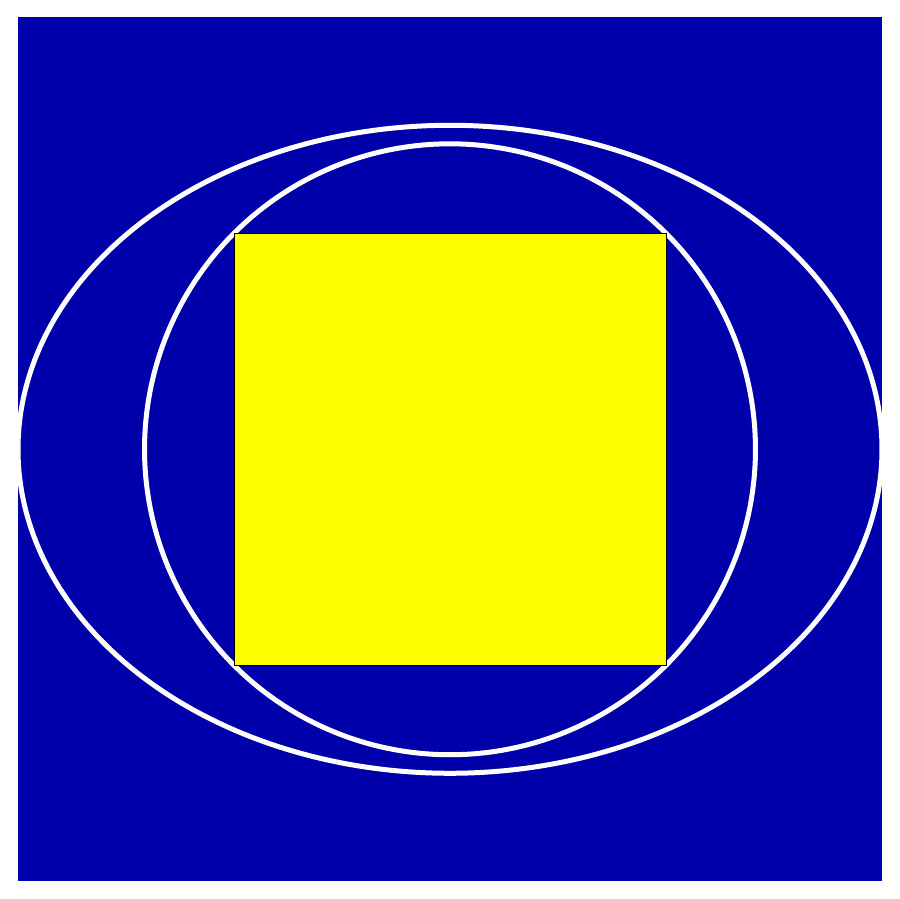}
  \caption{This shows the situation described in Example~\ref{ex:diff psd factorizations}.  The inner and outer squares correspond to the dehomogenized cones $P$ and $Q$ and any ellipse nested between the two squares corresponds to an orbit of psd factorizations.  In the example, we showed factorizations corresponding to both the circle and the ellipse drawn in the figure. }
  \label{fig:diff psd factorizations}
\end{figure}

In every example of a psd factorization that has been presented so far, either the matrices corresponding to the rows or the matrices corresponding to the columns can be chosen to be rank one matrices.  Initial attempts to construct a matrix without this property proved fruitless.  With the machinery of this section, finding such an example becomes almost trivial.

\begin{example}\label{ex:full rank psd factors}
In this example, we present a $4 \times 4$ matrix with psd rank two such that every $2 \times 2$ psd factorization must have a rank two matrix on a row and a rank two matrix on a column.  To construct this example, we start with the $3 \times 3$ derangement matrix as in Example~\ref{ex:orbit of factorization}, which corresponds to the picture shown in Figure~\ref{fig:derangement space}.   Now we add an extra vertex to the inner triangle and an extra facet to the outer triangle so that neither the new vertex nor the new facet touch the circle, as shown in Figure~\ref{fig:augmented derangement}.  This corresponds to a new matrix
\[ M = \left[ \begin{array}{cccc}
0 & 1 & 1 & 2 \\ 1 & 0 & 1 & 2 \\ 1 & 1 & 0 & 6 \\ 1 & 1 & 3 & 3 \end{array} \right]
=
\left[ \begin{array}{ccc}
0 & 1 & 1 \\ 1 & 0 & 1 \\ 1 & 1 & 0 \\ 1 & 1 & 3 \end{array} \right]
\left[ \begin{array}{cccc}
1 & 0 & 0 & 3 \\ 0 & 1 & 0 & 3 \\ 0 & 0 & 1 & -1 \end{array} \right].
\]

The same circle as before is still the unique ellipse nested between the two polytopes so the space of factorizations consists of a single orbit.  When we construct a psd factorization in this orbit, the matrix corresponding to the new vertex must lie in the interior of $\PSD^2$ and the matrix corresponding to the new facet must lie in the interior of $(\PSD^2)^*$.  Hence, they must have rank two.  Such a factorization may be obtained by augmenting our previous factorization for the derangement matrix with the matrix 
$\left[ \begin{array}{cc} 1 & \frac{1}{2} \\ \frac{1}{2} & 1 \end{array} \right]$
for the new vertex and the matrix
$\left[ \begin{array}{cc} 2 & -1 \\ -1 & 2 \end{array} \right]$
for the new facet.
\end{example}

\begin{figure}[ht]
  \includegraphics[width=4.5cm]{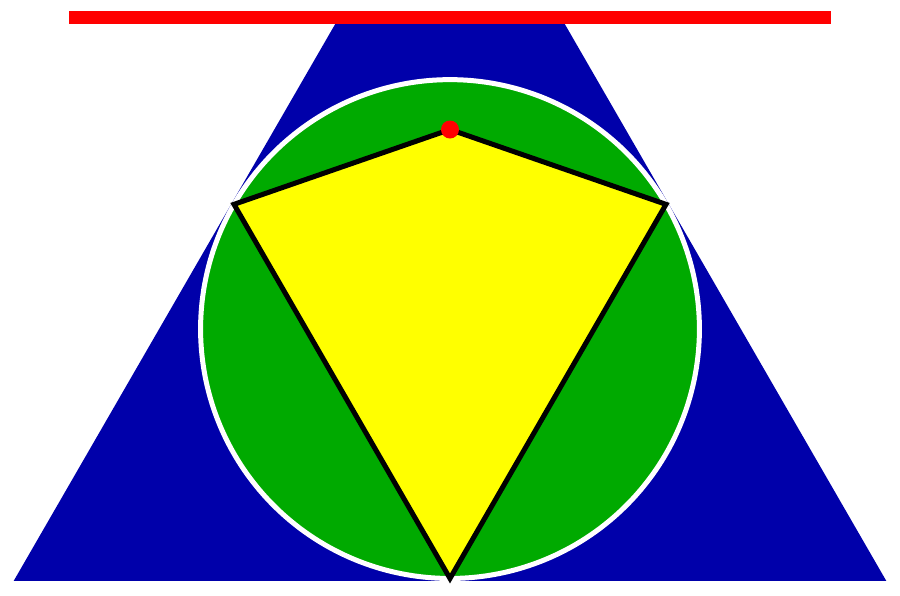}
  \caption{Here we take the picture corresponding to the $3 \times 3$ derangement matrix and add an additional facet to the outer polytope and an additional vertex to the inner polytope (shown in red).  Since the new facet and vertex are not incident to the boundary of the linear embedding of $\PSD^2$, their corresponding matrices in the psd factorization must have full rank.}
  \label{fig:augmented derangement}
\end{figure}

For the special case where $M$ has rank three and we are considering $2 \times 2$ psd factorizations, we can show that the space of factorization orbits is connected.
%Using this machinery, we can prove a situation where the space of factorization orbits is always connected.  This result does not hold for the analogous situation in nonnegative rank, as shown in \cite{straten2003sandwich}.

\begin{proposition} \label{prop:32 connected}
Let $M$ be a nonnegative $p \times q$ matrix with psd rank two and usual rank three.  Then $\SF(M) / GL(2)$ is connected.
\end{proposition}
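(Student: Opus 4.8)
The plan is to feed the hypotheses into Corollary~\ref{cor:space of factorizations} to replace $\SF(M)/GL(2)$ by a space of ellipses, and then connect any two such ellipses by \emph{interpolating their defining quadratic forms}, using the S-lemma to control the outer polygon. First I would note that $\rank(M)=3=\binom{2+1}{2}$ and $\rankpsd(M)=2$, so Proposition~\ref{prop:space of factorizations} and Corollary~\ref{cor:space of factorizations} apply with $k=2$. Fixing a rank factorization $M=UV$, with $u_i$ the $i$th row of $U$ and $v_j$ the $j$th column of $V$, set $P=\cone(u_1,\dots,u_p)$ and $Q=\{x : v_j^Tx\ge 0\ \forall j\}$; then $\SF(M)/GL(2)$ is homeomorphic to the space of invertible linear images $C$ of $\PSD^2$ with $P\subset C\subset Q$. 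Since $\rank V=3$ the $v_j$ span $\RR^3$, so $Q$ is pointed; picking a linear functional $\ell$ positive on $Q\setminus\{0\}$ and slicing by $H=\{\ell=1\}$, I would identify this space with the set $\mathcal{E}$ of solid ellipses $E\subset H\cong\RR^2$ with $\bar{P}\subseteq E\subseteq\bar{Q}$, where $\bar{P}=P\cap H$ is a full-dimensional polygon (full-dimensional because $\rank U=3$) and $\bar{Q}=Q\cap H$ is a bounded polygon (bounded by pointedness of $Q$) containing $\bar{P}$. Here I use that an invertible linear image of $\PSD^2$ is a quadratic cone linearly isomorphic to the three-dimensional second-order cone, whose bounded planar sections are exactly the solid ellipses, and that this correspondence respects the topologies via the explicit continuous formulas in the proofs of Proposition~\ref{prop:space of factorizations} and Corollary~\ref{cor:space of factorizations}. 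It therefore suffices to show $\mathcal{E}$ is path-connected.

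Next I would write each solid ellipse $E\subset\RR^2$ as $E=E(\hat A):=\{x : (x,1)^T\hat A(x,1)\le 0\}$ for a symmetric $\hat A\in\CalS^3$ whose top-left $2\times 2$ block is positive definite and with $\det\hat A<0$; such an $\hat A$ exists and is unique up to a positive scalar. Given $E_0=E(\hat A_0)$ and $E_1=E(\hat A_1)$ in $\mathcal{E}$, set $\hat A_t=(1-t)\hat A_0+t\hat A_1$ and $q_t(x)=(x,1)^T\hat A_t(x,1)$ for $t\in[0,1]$. The top-left block of $\hat A_t$ is a convex combination of positive definite matrices, hence positive definite, so $q_t$ is strictly convex. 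Since $\bar{P}\subseteq E_0\cap E_1$, every $x\in\bar{P}$ satisfies $q_0(x)\le 0$ and $q_1(x)\le 0$, hence $q_t(x)\le 0$; thus $\bar{P}\subseteq E(\hat A_t)$, so $E(\hat A_t)$ has nonempty interior.

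The step that must do real work is showing $E(\hat A_t)\subseteq\bar{Q}$, because ``$E\subseteq\bar{Q}$'' is \emph{not} a convex condition on the parameters of $E$, so a convex combination of defining forms is not an obviously legal operation. Let $\ell_j(x)\ge 0$, $j=1,\dots,f$, be the facet inequalities of $\bar{Q}$. For each $j$, since $E_0=\{q_0\le 0\}$ has nonempty interior and $E_0\subseteq\bar{Q}\subseteq\{\ell_j\ge 0\}$, the S-lemma (Lemma~\ref{lem:s-lemma}) produces $\lambda_{0,j}\ge 0$ with $\ell_j(x)+\lambda_{0,j}q_0(x)\ge 0$ for all $x\in\RR^2$, and similarly $\lambda_{1,j}\ge 0$ with $\ell_j(x)+\lambda_{1,j}q_1(x)\ge 0$ for all $x$. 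If now $x\in E(\hat A_t)$ then $(1-t)q_0(x)+tq_1(x)=q_t(x)\le 0$, so $q_0(x)\le 0$ or $q_1(x)\le 0$, and in either case the corresponding inequality gives $\ell_j(x)\ge 0$. Hence $E(\hat A_t)\subseteq\{\ell_j\ge 0\}$ for every $j$, so $E(\hat A_t)\subseteq\bar{Q}$, which also forces it to be bounded, hence a genuine solid ellipse. Thus $t\mapsto E(\hat A_t)$ is a path in $\mathcal{E}$ from $E_0$ to $E_1$; since $E_0,E_1$ were arbitrary, $\mathcal{E}$, and therefore $\SF(M)/GL(2)$, is connected.

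The remaining work is routine but should not be skipped: the topological bookkeeping of the first paragraph — verifying that the homeomorphism of Corollary~\ref{cor:space of factorizations} composed with dehomogenization carries the Hausdorff-metric topology used when interpolating ellipses to the quotient topology on $\SF(M)/GL(2)$ — which is immediate from the explicit continuous formulas for those homeomorphisms. I expect the S-lemma interpolation in the third paragraph to be the crux: it is precisely what allows a naive convex combination of defining forms, which is \emph{not} a convex operation on the ellipses themselves, to nonetheless yield ellipses that stay sandwiched between $\bar{P}$ and $\bar{Q}$.
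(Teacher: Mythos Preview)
Your argument is correct and follows essentially the same route as the paper: reduce via Corollary~\ref{cor:space of factorizations} to ellipses nested between two planar polygons, then interpolate the defining quadratic forms. However, the S-lemma detour you flag as ``the crux'' is doing no work. Your own line ``so $q_0(x)\le 0$ or $q_1(x)\le 0$'' already says $x\in E_0\cup E_1$, and since $E_0\cup E_1\subseteq\bar Q$ you are done; the multipliers $\lambda_{0,j},\lambda_{1,j}$ never enter. This is exactly how the paper argues: it simply observes $E_t\subseteq E_0\cup E_1\subseteq\widetilde Q$ because $q_0$ and $q_1$ have the same sign on the complement of $E_0\cup E_1$. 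So your proof is right, but the part you identify as the key step is in fact superfluous.
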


\begin{proof}
Let $P$ and $Q$ be the cones in $\RR^3$ arising from a rank factorization as above.  By Corollary~\ref{cor:space of factorizations}, it is enough to show that $\Delta_2(P,Q) / GL(2)$ is connected.  To do this, we will dehomogenize the cones so that we can work with polytopes and ellipses.

The cone $Q$ must be pointed since it was formed from a full-rank matrix.  Thus, we can find an affine hyperplane such that the dehomogenization of $Q$ through this hyperplane is bounded.  We dehomogenize through this hyperplane to get polytopes $\widetilde{P} \subset \widetilde{Q}$.  Any element of $\Delta_2(P,Q) / GL(2)$ corresponds to an ellipse nested between $\widetilde{P}$ and $\widetilde{Q}$.  Thus, to finish the proof, it is enough to show that any two ellipses that are nested between $\widetilde{P}$ and $\widetilde{Q}$ can be connected by a path of ellipses.

Suppose $E_0$ and $E_1$ are ellipses nested between the two polytopes.  Then there exist quadratic polynomials $q_0$ and $q_1$ such that $E_i = \left\{ x \; | \; q_i(x) \geq 0 \right\}$.  For $t \in [0,1]$, define a quadratic polynomial $q_t = (1-t)q_0 + t q_1$ and the corresponding ellipse $E_t$.  Since $q_0$ and $q_1$ are nonnegative on the points of $\widetilde{P}$, so is $q_t$ and we have that $\widetilde{P} \subset E_t$.  Similarly, since $q_0$ and $q_1$ are negative on $\left( E_0 \cup E_1 \right)^c$, we have that $E_t \subset E_0 \cup E_1 \subset \widetilde{Q}$.  Thus, $E_t$ gives the desired path of ellipses.
%
%Now let $h(x)$ be a linear polynomial such that $h(x) \geq 0$ defines a facet of $\widetilde{Q}$.  Then by Lemma~\ref{lem:s-lemma} there exists a multiplier $\tau \geq 0$ such that $\tau h(x) \geq q_0(x)$ for all $x$. Similarly, there exists a multiplier $\mu \geq 0$ such that $\mu h(x) \geq q_1(x)$.  Hence, $q_t(x) \geq 0$ implies that $h(x) \geq 0$.  Thus, we have shown that $E_t \subset \widetilde{Q}$ and $E_t$ is the desired path of ellipses.
\end{proof}

We are not sure if Proposition~\ref{prop:32 connected} extends to matrices $M$ with $\rankpsd (M) = k$ and $\rank (M) = \binom{k+1}{2}$ for $k > 2$.
%We are not sure if the space of orbits $\SF(M) / GL(k)$ is connected for $k > 2$.
The proof in the $k=2$ case relied on the fact that bounded spectrahedra in $\PSD^2$ can be represented by a single polynomial inequality.  Higher dimensional spectrahedra require several polynomial inequalities and it is not clear if the proof can be extended to this setting.  Searching for a counterexample has also been difficult, since the next case involves linear images of $\PSD^3$ nested between six-dimensional cones.
\section{Symmetric factorizations}
\label{sec:symmetric_psd_factorizations}

In this section we consider nonnegative matrices that admit \emph{symmetric psd factorizations} where the row and column factors $A_i$ and $B_j$ are required to be the same. Matrices that admit such a factorization are called \emph{completely psd} \cite{laurent2013conic}, by analogy to \emph{completely positive matrices} \cite{berman2003completely}:
\begin{definition}
A symmetric matrix $M \in \S^n$ is called \emph{completely psd} if there exists $k \in \NN$ and $A_1,\dots,A_n \in \S^k_+$ such that $M_{ij} = \langle A_i, A_j \rangle$ for all $i,j=1,\dots,n$.
\end{definition}
The set of matrices that are \emph{completely psd} forms a convex cone in $\S^n$; we denote this cone by $\CS^n$. Completely psd matrices find applications in quantum information theory for the computation of certain quantum graph parameters \cite{laurent2013conic}.

Recall that a matrix $M$ is called \emph{completely positive} if there exist \emph{nonnegative} vectors $a_i$ such that $M_{ij} = \langle a_i, a_j \rangle$ for all $i,j=1,\dots,n$. The convex cone of $n\times n$ completely positive matrices is denoted by $\CP^n$. By representing a nonnegative vector as a diagonal psd matrix it is easy to see that any completely positive matrix is also completely psd, i.e., $\CP^n \subseteq \CS^n$. It is also clear from the definition that any completely psd matrix $M$ is necessarily nonnegative and positive semidefinite. Thus we have the inclusion 
\begin{equation}
\label{eq:inclusionsCPSD}
\CP^n \subseteq \CS^n \subseteq \DN^n
\end{equation}
where $\DN^n$ is the cone of matrices that are nonnegative and positive semidefinite (also called \emph{doubly nonnegative matrices}).

When $n \leq 4$ it is known that $\CP^n = \DN^n$ and thus the inclusions \eqref{eq:inclusionsCPSD} are all equalities.
It is known that for $n = 5$ the two inclusions \eqref{eq:inclusionsCPSD} are strict \cite{laurent2013conic,frenkel2010vector}:
\begin{itemize}
\item To show that $\CP^5 \neq \CS^5$, one can consider the $5\times 5$ matrix $M$ defined by:
\[ M_{ij} = \cos^2\left( \frac{4\pi}{5} (i-j) \right), \quad i,j=1,\dots,5. \]
The matrix $M$ is completely psd since it admits the factorization $M_{ij} = \langle a_i a_i^T, a_j a_j^T \rangle$ where $a_i = (\cos(4\pi i/5), \sin(4\pi i/5)) \in \RR^2$. On the other hand $M \notin \CP^5$ since $\langle H, M \rangle < 0$ where $H$ is an element of the dual cone $(\CP^5)^*$ known as the \emph{Horn form} (see e.g., \cite{dur2010copositive}):
\[
H =
\left[
\begin{array}{rrrrr}
     1  & -1  &  1 &   1 &  -1\\
    -1  &  1  & -1  &  1 &   1\\
     1  & -1  &  1 &  -1 &   1\\
     1  &  1  & -1 &   1 &  -1\\
    -1  &  1  &  1 &  -1 &   1
\end{array}\right].
\]
\item In \cite{laurent2013conic,frenkel2010vector} it was shown that any matrix $M \in \DN^5$ such that $\rank M = 3$ and whose sparsity pattern is the $5$-cycle is \emph{not} completely psd. One can easily exhibit such a matrix; thus this shows that $\CS^5 \neq \DN^5$ (cf. \cite[Eq. 3.3]{laurent2013conic} or \cite[Section 3]{frenkel2010vector} for concrete examples).
\end{itemize}

Several fundamental questions are open concerning the cone $\CS^n$. One important question is to know whether the cone $\CS^n$ is closed. For a completely-psd matrix $M$ one can define the \emph{cpsd-rank} of $M$ as the smallest integer $k$ for which $M$ admits a completely-psd factorization of size $k$. The closedness question concerning $\CS^n$ is related to the question of whether the cpsd-rank of a matrix $M \in \CS^n$ can be bounded from above by a function that depends only on $n$.
% Given $a,b,c$ 
%\begin{itemize}
%\end{itemize}
%In \cite{}
%\begin{proposition}
%Assume $M$ is a $5\times 5$ 
%\end{proposition}

%We illustrate these examples in the following figure:
%
%Questions:
%\begin{itemize}
%\item Is the cone $\CS^n$ closed? Is $\cl \CS^n = \DN^n$?
%\end{itemize}

%%%%%%%%%%%%%%%%%%%%%%%%%%%%%%%%%%%%%%%%%%%%%%%%%%%%%%%%%%%%%%%%%%%%%%%%%%
\section{Open questions}
\label{sec:open}
\subsection{Psd rank of special matrices}

There are very few matrices for which one can determine the psd rank precisely, and when we can, it is usually in very special cases where our coarse bounds such as the square root rank or the trivial rank bound turn out to be sufficient. As such, any new insight on determining the psd rank of concrete matrices will provide new tools to analyze psd rank in general. In that spirit, we propose a few more or less concrete  matrices whose psd ranks we would like to know, and would provide a starting point for this program.

\begin{problem}
Consider the $10$ by $10$ matrix $A$ whose rows and columns are indexed by subsets of $\{1, \ldots, 5\}$ of size $2$ and $3$ respectively, and defined by $A_{I,J}=|I \cap J|$. What is 
the psd rank of $A$?
\end{problem}

One geometric interpretation for the matrix $A$, is to take the
$10$-vertices of a {\em rectified $5$-cell} inscribed in a $3$-sphere and
take the generalized slack matrix with respect to the tangents at
the $10$-points. Since the unit ball in $\RR^4$ has a semidefinite
representation of size $4$, we know that the psd rank of $A$ is at
most $4$. The usual rank of $A$ being $5$, we know that its psd rank
must be at least $3$. If one can show that it is $4$ it would prove
that there is no smaller semidefinite representation of the
$3$-sphere.  A more general version of this problem can be attained by
allowing different set sizes and different numbers of elements.

\begin{problem}
Let $A(n)$ be the matrix whose rows and columns are indexed by subsets of $\{1, \ldots, n\}$ of size $\lfloor n/2 \rfloor$ and $\lceil n/2 \rceil$ respectively, and defined by $A_{I,J}=|I \cap J|$. What is the psd rank of $A(n)$?
\end{problem}

The matrices $A(n)$ again have an interpretation in terms of an inscribed polytope in the $(n-2)$-sphere. In general, we know only that their psd rank is at most $2\lceil\sqrt{n}\rceil$ and at least $\frac{1}{2}\sqrt{1+8n}-\frac{1}{2}$. These matrices are very special, in the sense that they have a rich symmetry structure. In fact they are related to the Johnson scheme $J(n,k)$, with $k=\lfloor n/2 \rfloor$, since if $A_i, i =0,\ldots,k$ are the generators of the associated algebra, $A(n)=\sum_{i=0}^k i A_i$. This justifies posing a more ambitious and
less well-defined question.

\begin{problem}
Let $A$ be a matrix in the Bose-Mesner algebra of the Johnson scheme
(or any other association scheme). Can one exploit the symmetry in
these matrices to provide non-trivial bounds on their psd ranks?
\end{problem}

%Currently we have three methods to derive lower bounds on the psd rank of a nonnegative matrix $M$. If $M$ contains an upper triangular %submatrix of size $r$, then $\rankpsd(M) \geq r$. If $\rank(M) = r$ then $\rankpsd(M) \geq \frac{1}{2}\sqrt{1+8r}-\frac{1}{2}$. Lastly, we can get %lower bounds on $\rankpsd(M)$ using quantifier elimination as described in Section~\ref{subsec:lower bounds for psd rank}.  All of these %methods are rather weak for general use. It would be extremely useful to develop techniques for finding strong lower bounds for psd rank.

\subsection{Geometry of psd rank}
%Matrices with psd rank at most $k$ lies inside the nonnegative part of the rank variety of matrices of rank at most $\binom{k+1}{2}$.  A full %semialgebraic description of this set remains unknown, even in the case of $k = 2$.

%\begin{problem}
%Give a complete semialgebraic description of the matrices $M \in \RR_+^{p \times q}$ such that $\rankpsd M \leq k$.
%\end{problem}

In Section~\ref{sec:space of factorizations}, we defined $\SF(M)$, the space of psd factorizations of size $k$ of a nonnegative matrix $M$ of rank ${k+1 \choose 2}$ and psd rank $k$. We showed that the quotient space $\SF(M)/GL(k)$ is connected when $k=2$. 
There is no reason to believe that such a connectivity result holds in general.

\begin{problem} 
\label{prob:space disconnected}
If $M$ is a nonnegative matrix with $\rankpsd (M) = k$ and $\rank (M) = \binom{k + 1}{2}$, then is the space of factorization orbits $\SF(M) / GL(k)$ always connected?  
\end{problem}

The analogous question for nonnegative rank was studied in \cite{straten2003sandwich}.  The authors showed examples where $M$ has rank three and the space of nonnegative factorization orbits of size three is disconnected (here the orbit is generated by the permutation group $S_3$, acting by permuting the entries of the nonnegative factorization).  A natural question to ask is whether these disconnected factorizations remain disconnected when we embed them in the space of $3 \times 3$ psd factorization orbits.  Embedding these factorizations is straightforward (just make the nonnegative vectors into diagonal matrices), but doing so changes the setting of Problem~\ref{prob:space disconnected} 
since we are now considering the possible $3 \times 3$ psd factorizations of a matrix $M$ with rank three and psd rank two.  We are able to show that the factorizations that were disconnected in the nonnegative case become connected when we embed them in the set of $3 \times 3$ psd factorizations. However, this does not settle Problem~\ref{prob:space disconnected}.

\subsection{Complexity and algorithms} Several complexity questions concerning psd rank are open. 

\begin{problem}
For a fixed constant $k$, is it NP-hard to decide if $\rankpsd (M) \leq k$? For example, is it NP-hard to decide if $\rankpsd(M)=3$?
%Show that oes there exist a polynomial-time algorithm to decide if $\rankpsd M \leq 3$ (where the input is a matrix $M \in \RR^{p\times q}_+$)? More generally, if $k$ is a fixed constant, does there exist a polynomial-time algorithm to decide if %Is it NP-hard to decide if $\rankpsd M \leq 3$ (where the input is a matrix $M$)?  If not, then for any fixed $k$ does there exist a polynomial time algorithm to decide if $\rankpsd M \leq k$?
\end{problem}

The equivalent question for the nonnegative rank was shown to be decidable in polynomial time \cite{arora2012computing}. The complexity of the algorithm proposed in \cite{arora2012computing} is polynomial in $p,q$ (the dimensions of the matrix) but doubly-exponential in $k$; this was later improved to a singly-exponential algorithm in $k$ in \cite{moitra2013almost}. Both algorithms express the problem of finding a nonnegative factorization of size $k$ as a semialgebraic system which is then solved using quantifier elimination algorithms \cite{renegar1992intro}. 
Since the complexity of quantifier elimination algorithms has an exponential dependence on the number of variables, one has to make sure that the number of variables in the semialgebraic system is independent of $p,q$ (the size of the nonnegative matrix) in order to get an algorithm that is polynomial in $p,q$. The semialgebraic formulations proposed in \cite{moitra2013almost} and \cite{arora2012computing} rely on the key fact that the solution of any linear program is a rational function of the data. This fact however is far from being true in semidefinite programming \cite{nie2010algebraic}, and this is one major obstacle in extending these algorithms to the psd rank case. % The only positive result related to the problem is when the rank of $M$ is equal to $\binom{k+1}{2}$, in which case the decision problem admits a polynomial-time algorithm (cf. \cite[Theorem 4.6]{gouveia2013worst} -- in the terminology of \cite{arora2012computing}, this corresponds to the case of simplicial factorizations).

Another important question is to know the complexity of computing the psd rank (here $k$ is not a fixed constant anymore). In Section \ref{sec:definitions} we saw that the psd rank of a matrix $M \in \RR^{p\times q}_+$ satisfies the following inequalities:
\begin{equation}
\label{eq:ineqbinom}
\rank(M) \leq \binom{\rankpsd(M)+1}{2} \quad \text{ and } \quad \rankpsd(M) \leq \min(p,q)
\end{equation}
%\begin{equation}
%\label{eq:ineqmin}
%\end{equation}
It is already not known whether deciding if any of these inequalities is tight can be achieved in polynomial-time.

\begin{problem}
Show that the psd rank is NP-hard to compute. More specifically show that the problems of deciding whether inequalities \eqref{eq:ineqbinom} are tight are NP-hard.
% More precisely, show that the complexity of deciding whether $\rank M = \binom{\rankpsd M+1}{2}$ is NP-hard.
\end{problem}

Note that in \cite[Theorem 4.6]{gouveia2013worst}, an algorithm is proposed to decide whether the first inequality of \eqref{eq:ineqbinom} is tight, however the complexity of the algorithm has an exponential dependence on $\rank(M)$ (the complexity of the algorithm is polynomial when $\rank(M)$ is a fixed parameter).
For the case of the nonnegative rank, it was shown \cite{vavasis2009complexity} that the problem of deciding whether $\rank_+(M) = \rank(M)$ is NP-hard.

%Note that for the second question, if the rank of $M$ is taken to be a fixed parameter, say $\rank M = k$, then there is a polynomial-time algorithm to decide whether the psd rank of $M$ satisfies $\binom{\rankpsd M+1}{2} = k$ as was shown in \cite[Theorem 4.6]{gouveia2013worst}.

%It is known that calculating the nonnegative rank of a matrix is NP-hard \cite{vavasis2009complexity,arora2012computing}. There exists a property called \emph{separability}, however, such that if a matrix is separable, then there does exist a polynomial time algorithm for calculating nonnegative rank.
%
%\begin{problem}
%Find a nontrivial matrix property such that if a matrix satisfies this property, then there exists a polynomial time algorithm for calculating psd rank.
%\end{problem}

Another interesting question is to find efficient algorithms to approximate the psd rank.

\begin{problem} Is there a polynomial time approximation algorithm for psd rank (or nonnegative rank) that will find a factorization of size at most $\alpha \cdot \rankpsd$ (or $\alpha \cdot \rankplus$) for some constant $\alpha$?
\end{problem}

%\begin{problem} What is the complexity of deciding if $\rankpsd(M)$ is strictly less than $\textup{min}(p,q)$,  where again $M \in \RR^{p \times q}_+$? In other words, what is the complexity of deciding whether $M$ has a psd factorization of strictly smaller size that the trivial factorization? Is this already NP-hard?
%\end{problem}

\subsection{Slack matrices} The psd rank of slack matrices of polytopes have been of special interest due to its applications to semidefinite lifts of polytopes as described in Section~\ref{sec:motivation}. For most families of polytopes, the growth rate of the psd rank of their slack matrices is unknown.  The tight results that are known are for families where the psd rank is small and grows on the same order as the dimension of the polytopes. A $0/1$ polytope is one whose vertices are $0/1$ vectors. It was shown in \cite{briet2013} that as $n$ grows, most $0/1$-polytopes of dimension $n$ must have psd rank exponential in $n$. The proof works via a counting argument and does not identify specific families of polytopes with such exponential psd rank. 

\begin{problem}
Find an explicit family of $0/1$-polytopes, $\{P_n \subset \RR^n \}$, such that the psd rank of a 
slack matrix of $P_n$ is exponential in $n$.
\end{problem}

Natural candidates for such examples are polytopes that come from NP-hard combinatorial optimization problems such as {\em cut polytopes} and {\em TSP polytopes}.  

It is unknown whether there exists a family of polytopes that can be expressed by liftings to small psd cones but require large polyhedral lifts.  In the language of nonnegative and psd rank, this leads to the following question. 

\begin{problem} \label{prob:large gap}
Find a family of polytopes that exhibits a large (e.g. exponential) gap between 
its psd and nonnegative ranks. 
\end{problem}

There are several families of polytopes with exponentially many facets (in the dimension of the polytope) that can be expressed by small polyhedral lifts or small psd lifts. The stable set polytope of a perfect graph on $n$ vertices has psd rank $n+1$ \cite[Theorem 4.12]{gouveia2013polytopes}. On the other hand, Yannakakis \cite[Theorem 5]{Yannakakis} proved that its nonnegative rank is at most $O(n^{\textup{log} \,n})$. Is nonnegative rank of the stable set polytope of a perfect graph polynomial in $n$ ? Even if it is not, the gap between psd rank and nonnegative rank would not be dramatic for this family. Polytopes with a large gap between psd rank and nonnegative rank as in Problem~\ref{prob:large gap} would show that semidefinite programming is truly more powerful than linear programming in expressing linear optimization problems over polytopes.

Goemans observed in \cite{goemans} that the nonnegative rank of a slack matrix is bounded below by $\log(v)$ where $v$ is the number of vertices of the polytope. The same argument shows that $\log(\sum f_i)$ is a lower bound to nonnegative rank of a polytope where $f_i$ is the number of $i$-dimensional faces of the polytope.  It would be interesting to know if there are similar lower bounds for the psd rank of a polytope coming from the combinatorial structure of the polytope. 

\begin{problem} 
\label{prob:faces and psd rank}
Develop good bounds on the psd rank of a polytope that use information about its combinatorial/facial structure.
\end{problem}

We remark that Corollary~\ref{cor:max facets with fixed psd rank} provides a lower bound to the psd rank of a polytope in terms of the number of facets of the polytope. However, the unknown constants in the bound prevent us from using it to understand specific polytopes. A result in the spirit of Problem~\ref{prob:faces and psd rank} from \cite{gouveia2013worst} is that generic polytopes of dimension $n$ with $v$ vertices have psd rank at least $(nv)^{1/4}$.

\subsection{Completely positive semidefinite matrices}

The following open question was raised in Section \ref{sec:symmetric_psd_factorizations} concerning symmetric psd factorizations.

\begin{problem}
Let $\CS^n$ be the set of $n\times n$ matrices $M$ that admit a factorization of the form \begin{equation}
\label{eq:cpsdfact} M_{ij} = \langle A_i, A_j \rangle
\end{equation}
 where $A_1,\dots,A_n$ are psd matrices. Does there exist a function $f(n)$ such that any matrix $M \in \CS^n$ admits a factorization of the form \eqref{eq:cpsdfact} where the factors $A_1,\dots,A_n$ have size at most $f(n)$?
\end{problem}

\noindent {\bf Acknowledgments}: We thank Troy Lee for sharing his results on the psd rank of Kronecker products as well on the Hermitian psd rank.  The authors also thank Thomas
Rothvo{\ss} for his helpful input in the proof of
Corollary~\ref{cor:eigbound}, and his comments on an earlier draft.

\bibliography{psdrank}
\bibliographystyle{alpha}

\end{document}